\newcounter{citedtheorems}
\newtheorem{defn}{Definition}[section]
\newtheorem{theorem}[defn]{Theorem}
\newtheorem*{th-ref}{Theorem} 
\newtheorem{thm-lit}[citedtheorems]{Theorem}  
\newtheorem{cor-lit}[citedtheorems]{Corollary}
\newtheorem{fact}[defn]{Fact}
\newtheorem{cor}[defn]{Corollary}
\newtheorem{concl}[defn]{Conclusion}
\newtheorem{conv}[defn]{Convention}
\newtheorem{claim}[defn]{Claim}
\newtheorem{lemma}[defn]{Lemma}
\newtheorem{obs}[defn]{Observation}
\newtheorem{rmk}[defn]{Remark}
\newtheorem{note}[defn]{Note}
\newtheorem{expl2}[defn]{Example}
\newtheorem{expl}[defn]{Example}
\newtheorem{qst}[defn]{Question}
\newcommand{\br}{\vspace{2mm}}
\newcommand{\vrt}{\rule{0pt}{12pt}}
\newcommand{\svrt}{\rule{0pt}{10pt}}
\newcommand{\loc}{\operatorname{Loc}}
\newcommand{\fss}{{\mathcal{P}}_{\aleph_0}}
\newcommand{\colct}{\operatorname{col-ct}}
\newcommand{\vp}{\varphi}
\begin{document}

\title{Persistence and NIP in the characteristic sequence}
\author{M. E. Malliaris}
\address{Group in Logic, University of California at Berkeley, 910 Evans Hall, Berkeley, CA 94720}
\email{mem@math.berkeley.edu}

\begin{abstract}
For a first-order formula $\varphi(x;y)$ we introduce and study 
the characteristic sequence $\langle P_n : n < \omega \rangle$ of hypergraphs defined by 
$P_n(y_1,\dots,y_n) := (\exists x) \bigwedge_{i \leq n} \varphi(x;y_i)$.   
We show that combinatorial and classification theoretic properties of the 
characteristic sequence reflect classification theoretic properties of $\varphi$ and vice versa.   
Specifically,  we show that  some  tree  properties are detected by the presence of certain 
combinatorial configurations in the characteristic sequence while other properties 
such as instability and the independence property manifest themselves in the persistence 
of complicated configurations under localization.
\end{abstract}

\maketitle

\section{Introduction}
This article defines and develops the theory of characteristic sequences. 
The characteristic sequence $\langle P_n : n<\omega \rangle$ associated to a first-order formula $\vp$ is a countable
sequence of hypergraphs defined on the parameter space of $\vp$; 
this association allows for a new description of the combinatorial complexity
of $\vp$-types in terms of graph-theoretic complexity of the hypergraphs $P_n$.
The construction arose from work of the author on saturation of ultrapowers, as described briefly below, 
but is of independent interest.
There is a model-theoretic sensibility throughout, but many of the arguments are combinatorial.
In fact, if the reader is familiar with basic model theory and is willing to take on faith the interest of certain classification-theoretic dividing lines, the article is largely self-contained. 

At first glance, the characteristic sequence gives a transparent language for the kinds of arguments which occur in many contexts where
the fine structure of dividing or thorn-dividing is being analyzed. However, there is a power in our general framework
which accrues from the fact that the formulas $P_n$ are simultaneously:

\begin{enumerate}
\item graphs, so we can ask about their complexity in the sense of graph theory;
\item formulas definable in the background theory $T$, so we can ask about their complexity in the sense of classification theory;
\item descriptors of the parameter space of the given formula $\vp$. 
\end{enumerate}

Leveraging these three contexts against each other puts strong restrictions on the behavior of the $P_n$. We obtain, for instance,
a description of NIP theories as theories in which any initial segment of the characteristic sequence is, 
after localization, essentially trivial: see \S 6 below. In some sense, then, the characteristic sequence 
is a tool for analyzing the fine structure of the independence property and gives a natural description of its complexity. 

By way of describing the kinds of complexity we consider, let us briefly mention
two background motivations for this work. The first is a deep question of Keisler 
about the structure of a preorder on countable theories which compares the difficulty of producing saturated
regular ultrapowers \cite{keisler}. Shelah described the structure of this so-called Keisler order on NIP theories in a series of 
surprising results, collected in \cite{Sh:c} chapter VI, but its structure on theories with the independence property
remains open. 
In \cite{mm-article}, \cite{mm-thesis} we showed that the structure of Keisler's order 
on unstable theories (thus, on theories with the independence property)
depends on a classification of $\vp$-types, and specifically on an analysis of characteristic sequences. 

The second motivation is the development of a new language for interactions between model theory and graph theory. 
An interest in the complexity of $\vp$-types asks how the many finite fragments of configurations 
cluster in the characteristic sequence and how uniformly or regularly they are distributed. 
These are issues which graph theory is particularly 
articulate at describing. Insofar as properties like edge density and edge distribution and structural properties of the 
hypergraphs $P_n$ can be shown to have model-theoretic content, this opens up the possibility of using a deep collection of
structure theorems for graphs  
to give model-theoretic information \cite{mm-thesis}. 

The organization of the article is as follows. Section 2 contains definitions and basic properties. Section 3 gives several
motivating examples. Section 4 gives a series of ``static'' arguments relating configurations in the characteristic sequence to
classification-theoretic dividing lines. Section 4 begins the work of separating out inessential complexity from
the base set of a type under analysis via localization; 
persistence and its associated ``dynamic'' arguments are motivated and defined. 
Section 6 describes NIP, simplicity and stability in terms of persistence. 

\subsection*{Acknowledgments} This is work from my doctoral thesis at Berkeley under the direction of Thomas Scanlon. I would also like to thank
Leo Harrington for many interesting discussions and John Baldwin for helpful remarks on an early version. Part of this work was done as a
Mathlogaps fellow in Lyon and I thank the \'Equipe de Logique for their warm hospitality.

\section{The characteristic sequence} 

\begin{defn} \label{notation} \emph{(Notation and conventions, I)}
\begin{enumerate}
\item Throughout this article, if a variable or a tuple is written $x$ or $a$ rather than $\overline{x}, \overline{a}$, this does not necessarily
imply that $\ell(x), \ell(a)=1$. 
\item Unless otherwise stated, $T$ is a complete theory in the language $\mathcal{L}$.
\item A graph in which no two elements are connected is called an \emph{empty graph}. A pair of elements which are not
connected is an \emph{empty pair}. When $R$ is an $n$-ary edge relation, to say that some $X$ is an \emph{$R$-empty graph} means that
$R$ does not hold on any $n$-tuple of elements of $X$. $X$ is an \emph{$R$-complete graph} if $R$ holds on every $n$-tuple from $X$. 
\item \textbf{Important:} $\vp_n(x;y_1,\dots y_n)$ denotes the formula $\bigwedge_{i\leq n} \vp(x;y_i)$.
\item In discussing graphs we will typically write concatenation for union, i.e. $Ac$ for $A \cup \{c \}$.  
\item \label{div} A formula $\psi(x;y)$ of $\mathcal{L}$ will be called \emph{dividable} if there exists an infinite set $C \subset P_1$ 
and $k<\omega$ such that $\{ \psi(x;c) : c \in C \}$ is $1$-consistent but $k$-inconsistent. (Thus, by compactness, some instance of $\psi$
divides.)  
When it is important to specify the arity $k$, write \emph{$k$-dividable}. 
\item A set is \emph{$k$-consistent} if every $k$-element subset is consistent, and it is \emph{$k$-inconsistent} if every $k$-element
subset is inconsistent. 
\end{enumerate}
\end{defn} 

To each formula $\vp$ we associate a countable sequence of hypergraphs, the \emph{characteristic sequence}, which describe incidence relations on the
parameter space of $\vp$. The idea is to give an analysis of $\vp$-types by describing the way that certain distinguished sets $A$
(the complete $P_\infty$-graphs, avatars of consistent partial $\vp$-types) sit inside the ambient hypergraphs $P_n$.

\begin{defn} \emph{(Characteristic sequences)} \label{characteristic-sequence} 
Let $T$ be a first-order theory and $\vp$ a formula of the language of $T$. 
\begin{itemize}
\item For $n<\omega$, $P_n(z_1,\dots z_n) :=  \exists x \bigwedge_{i\leq n} \vp(x;z_i) $.
\item The \emph{characteristic sequence} of $\vp$ in $T$ is $\langle P_n : n<\omega \rangle$.
\item Write $(T,\vp) \mapsto \langle P_n \rangle$ for this association. 
\item Convention: we assume that $T \vdash \forall y \exists z \forall x (\vp(x;z) \leftrightarrow \neg \vp(x;y))$. If this does not already
hold for some given $\vp$, replace $\vp$ with $\theta(x;y,z) = \vp(x;y) \land \neg\vp(x;z)$. 
\end{itemize}
\end{defn}

\begin{conv} \label{localization-depends-on-T}
\emph{Below, we will ask a series of questions about whether certain, possibly infinite, configurations appear as subgraphs of the $P_n$, 
or of the $P^f_n$ in some finite localization, Definition \ref{localization}. 
For our purposes, the existence of these configurations is a property of $T$. 
That is, we may, as a way of speaking, ask if some configuration $X$ appears, or is persistent,
inside of some $P_n$; however, we will always mean \emph{whether or not it is consistent with $T$} 
that there are witnesses to $X$ inside of $P_n$ interpreted in some sufficiently saturated model.
Certainly, one could ask the question of whether some \emph{given} model of $T$, expanded to model of the $P_n$, must include 
witnesses to $X$; we will not do so here.   
Thus, the formulas $P_n$ will often w.l.o.g. be identified with their interpretations in some monster model.} 
\end{conv}

\begin{defn} \emph{(Notation and conventions, II)}
\begin{enumerate}
\item[(8)] $P_\infty$ will be shorthand for the collection of predicates $P_n$ when the context 
(of a given condition, not necessarily definable, which holds of $P_n$ for all $n$) is clear, e.g. $A$ is a $P_\infty$-complete graph
meaning $A$ is a $P_n$-complete graph for all $n$.
\item[(9)] The complete $P_\infty$-graph $A$ will be called a \emph{positive base set} when the emphasis is on its identification with some
consistent partial $\vp$-type under analysis, as described in Observation \ref{bp}(5). 
\item[(10)] The sequence $\langle P_n \rangle$ \emph{has support $k$} if: $P_n(y_1,\dots y_n)$
iff $P_k$ holds on every $k$-element subset of $\{ y_1,\dots y_n \}$. See Remark \ref{support-fcp}.
\item[(11)] The element $a \in P_1$ is a \emph{one-point extension} of the $P_n$-complete graph $A$ just in case $Aa$ is also 
a $P_n$-complete graph. In most cases, $n$ will be $\infty$. 
\end{enumerate}
\end{defn}

\begin{obs} \emph{(Basic properties)} \label{bp}
Let $\langle P_n : n<\omega \rangle$ be the characteristic sequence of $(T,\vp)$. Then, regardless of the choice of $T$ and $\vp$,
we will have:
\begin{enumerate}
\item (Reflexivity) $\forall x ( P_1(x) \rightarrow P_n(x,\dots x) )$. 
In general, for each $\ell \leq m < \omega$, 
\begin{align*} \forall z_1,\dots z_\ell, y_1, \dots y_m  & \left( \vrt \left( \{ z_1,\dots z_\ell \} = \{ y_1, \dots y_m \} \right) \right. \\  
\implies & \left. \left( P_\ell(z_1,\dots z_\ell) \iff P_m (y_1,\dots y_m) \right) \vrt \right) \\
\end{align*}
\item (Symmetry) For any $n < \omega$ and any bijection $g: n \rightarrow n$,
\[ \forall y_1,\dots y_n \left(\vrt P_n(y_1,\dots y_n) \iff P_n(y_{g(1)}, \dots  y_{g(n)})\right) \]
\item (Monotonicity) 
For each $\ell \leq m <\omega$,
\begin{align*}
\forall z_1,\dots z_\ell, y_1, \dots y_m & \left( \vrt \left(  \{ z_1, \dots z_\ell \} \subseteq \{ y_1,\dots y_m \} \right) \right. \\
 \implies & \left. \left( P_m (y_1,\dots y_m) \implies P_\ell(z_1,\dots z_\ell) \right) \vrt \right) \\
\end{align*}
\noindent So in particular, if $\models P_m(y_1,\dots y_m)$ and $\ell < m$ then $P_\ell$ holds on all $\ell$-element subsets of $\{y_1,\dots y_m \}$.
The converse is usually not true; see Remark \ref{support-fcp}.

\item (Dividing) Suppose that for some $n <\omega$, it is consistent with $T$ that 
there exists an infinite subset $Y \subset P_n$
such that $Y^k \cap P_{nk} = \emptyset$. Then in any sufficiently saturated model of $T$, 
some instance of the formula $\vp_n(x;y_1,\dots y_n) = \bigwedge_{i<n} \vp(x;y_i)$
$k$-divides. 
\item (Consistent types) Let $A \subset P_1$ be a set of parameters in some $M \models T$.  Then 
\\ $\{ \vp(x;a) : a \in A \}$ is a consistent partial $\vp$-type iff $A^n \subset P_n$ for all $n<\omega$. 
\end{enumerate}
\end{obs}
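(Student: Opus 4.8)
The plan is to read off (1)--(3) directly from the syntactic form of $P_n$, with no appeal to $T$ or $\vp$. For (1) and (2), note that $\bigwedge_{i\le n}\vp(x;z_i)$ is determined up to logical equivalence by the \emph{set} $\{z_1,\dots,z_n\}$ alone --- the ordering and the repetitions are irrelevant --- hence so is its existential closure $P_n$; this yields reflexivity in the general form stated and symmetry simultaneously. For (3), if $\{z_1,\dots,z_\ell\}\subseteq\{y_1,\dots,y_m\}$ then $\bigwedge_{j\le m}\vp(x;y_j)$ syntactically implies $\bigwedge_{i\le\ell}\vp(x;z_i)$, and $\exists x$ is monotone under implication, so $P_m(\bar y)\to P_\ell(\bar z)$; the ``in particular'' clause is the instance where $\bar z$ ranges over $\ell$-element subsets, and the stated failure of the converse is exactly the phenomenon recorded in Remark~\ref{support-fcp}, so nothing further is asserted.

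For (5) I would run both directions through finite satisfiability in $M$. Since $M\models T$, a single conjunction $\bigwedge_{i\le n}\vp(x;a_i)$ with parameters from $A$ is realized in $M$ if and only if $M\models\exists x\bigwedge_{i\le n}\vp(x;a_i)$, i.e.\ if and only if $(a_1,\dots,a_n)\in P_n$. The partial type $\{\vp(x;a):a\in A\}$ is consistent exactly when every finite subset of it is satisfiable in $M$, and finite subsets correspond --- after using reflexivity to discard repeated parameters --- precisely to the tuples occurring in the various $A^n$; hence the type is consistent iff $A^n\subseteq P_n$ for every $n$. This is the statement underlying the whole construction: complete $P_\infty$-graphs are exactly the consistent partial $\vp$-types, as previewed in items (8)--(9) of the conventions.

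For (4) I would first apply Convention~\ref{localization-depends-on-T}: since it is consistent with $T$ that an infinite $Y\subseteq P_n$ with $Y^k\cap P_{nk}=\emptyset$ exists, compactness produces such a $Y$ in any sufficiently saturated $M\models T$. Writing $Y=\{\bar b^j:j<\omega\}$ as a set of $n$-tuples, the condition $\bar b^j\in P_n$ says that each instance $\vp_n(x;\bar b^j)$ is consistent, while $Y^k\cap P_{nk}=\emptyset$ says that for any $k$ distinct indices the concatenated parameter tuple lies outside $P_{nk}$, i.e.\ $\vp_n(x;\bar b^{j_1})\wedge\dots\wedge\vp_n(x;\bar b^{j_k})$ is inconsistent; so the family $\{\vp_n(x;\bar b^j):j<\omega\}$ is $1$-consistent and $k$-inconsistent, which is exactly $k$-dividability of $\vp_n$ in the sense of Definition~\ref{notation}(\ref{div}). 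To upgrade this to an actual $k$-dividing \emph{instance}, I would extract by Ramsey's theorem an indiscernible sequence $\langle\bar c^j:j<\omega\rangle$ with the same EM-type as $\langle\bar b^j:j<\omega\rangle$: lying in $P_n$ is a property of single terms and lying outside $P_{nk}$ is a property of increasing $k$-tuples, so both survive, and $\vp_n(x;\bar c^0)$ then $k$-divides. I do not expect any real obstacle here: (1)--(3) and (5) are bookkeeping with the definition of $P_n$ together with compactness, and the only non-formal ingredient --- the extraction of an indiscernible sequence in (4) --- is entirely standard.
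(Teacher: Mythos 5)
Your proposal is correct and follows essentially the same route as the paper: items (1)--(3) and (5) are read off directly from the syntactic shape of $P_n$ and compactness (the paper treats them as immediate), and for (4) the paper also passes by compactness to an indiscernible sequence of $n$-tuples, then reads off $1$-consistency from $P_n$ and $k$-inconsistency from $Y^k\cap P_{nk}=\emptyset$; your Ramsey extraction is just the standard unpacking of the paper's ``by compactness.''
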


\begin{proof} 
(4) By compactness, there exists an infinite indiscernible sequence of $n$-tuples
$C = \langle c^i_1,\dots c^i_n : i<\omega \rangle$ such that $C^k \cap P_{nk} = \emptyset$. 
The set $\{ \vp_n(x;c^i_1,\dots c^i_n) : i<\omega  \}$ is therefore $k$-inconsistent. 
However, it is $1$-consistent: for each $c^i_1,\dots c^i_n \in C$,
$M \models P_n(c^i_1,\dots c^i_n)$, so $M \models \exists x \vp_n(x;c^i_1,\dots c^i_n)$.
\end{proof}

\begin{conv} \label{conv-configurations} \emph{($T_0$-configurations)}
Throughout this article, let $T_0$ denote the incomplete theory in the language $\mathcal{L}_0 := \{ P_n : n<\omega \} \cup \{ = \}$
which describes (1)-(3) of Observation \ref{bp}. Blueprints for hypergraphs in the language $\mathcal{L}_0$ which are
consistent with $T_0$ will be called {$T_0$-configurations}. 
That is: a \emph{finite $T_0$-configuration} is a pair $X = (V_X, E_X)$ where 
$V_X = n < \omega$, $E_X \subseteq \mathcal{P}(n)$ and the following is consistent with $T_0$:
\begin{equation} \label{tzero-eqn}
(\exists x_1,\dots x_n) ~ (\forall \sigma \subseteq n, |\sigma| = i, \sigma = \{{\ell_1},\dots {\ell_i} \})~
\left( \vrt P_i(x_{\ell_1},\dots x_{\ell_i}) \iff \sigma \in E_X  \right)
\end{equation}

\noindent In general, the domain of a $T_0$-configuration may be infinite; we simply require that its restriction
to every finite subdomain satisfy $(\ref{tzero-eqn})$.  
These are the graphs which can consistenly occur as finite subgraphs of \emph{some} characteristic sequence. 
That every such graph appears in some sequence follows from Example \ref{maximal-example} below.
\end{conv}

\begin{conv} \label{conv-configurations-1} \emph{($T_1$-configurations)}
Fix $T, \vp$, and the associated sequence $\langle P_n : n<\omega \rangle$. 
Let $M \models T$; there is a unique expansion of $M$ to $\mathcal{L}_0 = \{ P_n : n<\omega \} \cup \{ = \}$. 
Throughout this article, whenever $T, \vp, \langle P_n \rangle$ are thus fixed,
let $T_1$ denote the complete theory of $M$ in the language $\mathcal{L}_0$. As the characteristic sequence
is definable in $T$, when $T$ is complete this will not depend on the model chosen. 

Hypergraphs in the language $\mathcal{L}_0$ which are
consistent with $T_1$ will be called \emph{$T_1$-configurations}. 
\end{conv}

Recall that a formula $\vp(x;y)$ has the \emph{finite cover property} if for arbitrarily large $n<\omega$
there exist $a_0,\dots a_n$ such that $\{ \vp(x;a_0),\dots \vp(x;a_n) \}$ is $n$-consistent but $(n+1)$-inconsistent. 
 
\begin{rmk} \label{support-fcp}
The following are equivalent, for $(T,\vp) \mapsto \langle P_n \rangle$:
\begin{enumerate}
\item There is $k<\omega$ such that the sequence $\langle P_n \rangle$ has support $k$. 
\item $\vp$ does not have the finite cover property. 
\end{enumerate}
\end{rmk}

\noindent 
In practice, when analyzing saturation of $\vp$-types the finite cover property can often, but not always, be avoided by a judicious choice of formula.
For instance, if $\vp$ is unstable, some fixed finite conjunction $\theta$ of instances of $\vp$ has the finite cover property
(\cite{Sh:c}.II.4); if we choose to present $\vp$-types as $\theta$-types the
characteristic sequence would not have finite support.
Nonetheless, it may happen even in unstable theories that there is a set $\Sigma \subset \mathcal{L}$ of formulas without the fcp such that
$M \models T$ is $\lambda^+$-saturated iff $M$ realizes all $\vp_0$-types over sets of size $\lambda$ for all $\vp_0 \in \Sigma$. 
This is true, for instance, of $\Sigma = \{ \psi(x;y,z) := xRy \land \neg xRz \}$ in the random graph, and of 
$\Sigma = \{ \psi(x;y,z) := y < x < z \}$ in $(\mathbb{Q}, <)$.

\section{Some examples}

This section works out several motivating examples. 
We refer informally to localization and persistence, which will be defined in Definitions \pageref{localization} and \pageref{persistence} below; 
the general definitions of $(\eta, \nu)$-arrays and trees will be given in Definition \ref{diagrams-arrays}.

\begin{expl2}(The random graph) \label{expl-r-g} 
\end{expl2}
$T$ is the theory of the random graph, and $R$ its binary edge relation. 
Let $\vp(x;y,z) = xRy \land \neg xRz$, with $(T,\vp) \mapsto \langle P_n \rangle$. Then:

\begin{itemize}
\item $P_1((y,z)) \iff y\neq z$.
\item $P_n((y_1,z_1),\dots (y_n,z_n)) \iff \{ y_1,\dots y_n \} \cap \{z_1,\dots z_n\} = \emptyset$.
\end{itemize}

Notice:

\begin{enumerate}
\item The sequence has support 2.
\item There is a uniform finite bound on the size of an empty graph $C \subset P_1, C^2 \cap P_2 = \emptyset$: an analysis of the theory
shows that $\vp$ is not dividable, and inspection reveals this bound to be 3.
\item $P_n$ does not have the order property for any $n$ and any partition of the $y_1,\dots y_n$ into object and
parameter variables. (Proof: The order property in $P_n$ implies dividability of $\vp_{2n}$ by Observation \ref{order-dividable}. But
none of the $\vp_\ell$ are dividable, as inconsistency only comes from equality.)
\item Of course, the formula $\vp$ has the independence property in $T$. We can indeed find a configuration in $P_2$ which
witnesses this: any $C$ which models the $T_0$-configuration having $V_X=\omega$ and $\{i,j\} \notin E_X \iff \exists n(i=2n \land j=2n+1)$. 
Note that $\vp$ will have the independence property on any infinite
$P_2$-complete subgraph of the so-called $(\omega,2)$-array $C$ (see Observation \ref{omega-k-ip} below).  
\item As $\vp$ is unstable, $\vp$-types are not necessarily definable in the sense of stability theory. 
However, we can obtain a kind
of definability ``modulo'' the independence property, or more precisely, 
definability over the name for a maximal consistent subset of an $(\omega, 2)$-array as follows: 
\end{enumerate}

\br
\noindent \emph{Definable types modulo independence.}
Let $p \in S(M)$ be a consistent partial $\vp$-type presented as a positive base set $A \subset P_1$. 
Let us suppose $p \vdash \{xRc : c\in C\} \cup \{ \neg xRd : d \in D \} \vdash p$,
so that $A \subset M^2$ is a collection of pairs of the form $(c, d)$ which generate the type. 

There is no definable (in $T$ with or without parameters, so in particular not from $P_2$) extension of the type $A$, so we cannot expect
to find a localization of $P_1$ around $A$ which is a $P_2$-complete graph. However:

\begin{claim} In the theory of the random graph, with $\vp(x;y,z) = xRy \land \neg xRz$ as above,
for any positive base set $A \subset P_1$
there exist a definable $(\omega,2)$-array $W \subset P_1$, a solution $S$ of $W$ and an $S$-definable $P_\infty$-graph
containing $A$. 
\end{claim}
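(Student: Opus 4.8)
\emph{Proof proposal.} The plan is to realize the partial type coded by $A$ by a single element $x^{*}$ in the monster, build a definable array $W$ whose solutions ``are'' complete $\vp$-types, and recover the $\vp$-type of $x^{*}$ as a set definable from a predicate for one such solution. First set $C:=\{c:(c,d)\in A\text{ for some }d\}$ and $D:=\{d:(c,d)\in A\text{ for some }c\}$. Since $A\subseteq P_1$ each member of $A$ has distinct coordinates, and since $A$ is a positive base set $A^2\subseteq P_2$, so the description of $P_2$ in this example forces $C\cap D=\emptyset$. By Observation~\ref{bp}(5) the type $\{\vp(x;a):a\in A\}$ is consistent, and since $\vp(x;c,d)=xRc\wedge\neg xRd$ it is equivalent to $p(x):=\{xRc:c\in C\}\cup\{\neg xRd:d\in D\}$. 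If $A=\emptyset$ there is nothing to prove; otherwise $C,D\neq\emptyset$, and I fix $c_0\in C$, $d_0\in D$ and a realization $x^{*}\models p$.

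For the array, put $I:=M\setminus\{c_0,d_0\}$, let the $b$-th column be $W_b:=\{(b,d_0),(c_0,b)\}$ for $b\in I$, and set $W:=\bigcup_{b\in I}W_b\subseteq P_1$. Each $W_b$ is a $P_2$-empty pair, since its two members share the coordinate $b$ across the two slots, while for $b\neq b'$ in $I$ all four cross-pairs between $W_b$ and $W_{b'}$ lie in $P_2$, by the explicit description of $P_2$ together with $C\cap D=\emptyset$ and $b,b'\notin\{c_0,d_0\}$. Writing $\pi_1,\pi_2$ for the coordinate projections, the same description says a subset of $P_1$ is a complete $P_\infty$-graph iff its $\pi_1$-image and $\pi_2$-image are disjoint; checking this for transversals of $W$ shows every transversal of $W$ is a complete $P_\infty$-graph. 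As $W$, its column-equivalence relation and its indexing are definable over $\{c_0,d_0\}$, this presents $W$ as a definable $(\omega,2)$-array inside $P_1$ in the sense of Definition~\ref{diagrams-arrays} (if literally $\omega$ columns are wanted and $A$ is countable, restrict $I$ to a countable set containing $(C\cup D)\setminus\{c_0,d_0\}$).

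Now choose the solution and the graph. Let $S:=\{(b,d_0):b\in I,\ x^{*}Rb\}\cup\{(c_0,b):b\in I,\ \neg x^{*}Rb\}$. This is a transversal of $W$, and $\pi_1(S)\cap\pi_2(S)=\emptyset$: within $I$ the two families are disjoint by construction, $c_0\notin\pi_2(S)$ as $c_0\notin I$ and $c_0\neq d_0$, and $d_0\notin\pi_1(S)$ as $d_0\notin I$ and $d_0\neq c_0$. Hence $S$ is a solution of $W$. Finally set
\[
G:=\{(y,z)\in P_1 : (y=c_0\ \vee\ (y,d_0)\in S)\ \wedge\ (z=d_0\ \vee\ (c_0,z)\in S)\}.
\]
Unwinding the definition of $S$ and using $x^{*}Rc$ for $c\in C$ and $\neg x^{*}Rd$ for $d\in D$ gives $G=\{(y,z):y\neq z,\ x^{*}Ry,\ \neg x^{*}Rz\}$, i.e.\ $G$ is the $\vp$-type of $x^{*}$ displayed as a set of parameters. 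Therefore $G$ is a complete $P_\infty$-graph, $A\subseteq G$ because $x^{*}\models p$, and $G$ is defined by the displayed formula using only $c_0,d_0$ and the predicate $S$, so $G$ is $S$-definable, which is the assertion.

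The only real difficulty is matching the combinatorial vocabulary to the conventions: one must read ``$(\omega,2)$-array'', ``solution'' and especially ``$S$-definable'' correctly, the last as ``definable with a predicate for the (infinite, coinfinite) set $S$'' — this is forced, since as observed just before the claim no localization of $P_1$ around $A$ can be $P_2$-complete — after which nothing remains but the bookkeeping around the two excluded coordinates $c_0,d_0$. Conceptually the point is simply that in the random graph the array ``for each $b$, decide whether $xRb$'' is definable, and a solution of it records exactly a complete $\vp$-type extending $p$.
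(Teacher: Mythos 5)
Your proof is correct and follows essentially the same approach as the paper. The array $W$ you build — pairs sharing one coordinate with one of two fixed pivot elements — is the same object as the paper's $W_0 \setminus \{(b,a)\}$ (the paper sets $W_0 := \{(y,z) \in P_1 : \neg P_2((y,z),(a,b))\}$, which unwinds to exactly your $W$), and naming a transversal as a solution to cut out the $\vp$-type of a realization is the same mechanism. The one cosmetic difference worth noting: the paper chooses its pivot pair $(a,b)$ with $a,b \notin C \cup D$, which makes the canonical seed $\{(b,d):d \in D\} \cup \{(c,a):c \in C\}$ a $P_2$-complete subgraph of $W$ compatible with $A$ so that \emph{any} maximal completion of it works as a solution, whereas you take $c_0 \in C$, $d_0 \in D$ and instead define $S$ outright from a realization $x^*$. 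Both are fine; yours just shifts where the compatibility-with-$A$ check happens (you verify it implicitly by exhibiting $G \supseteq S \cup A$ as a $P_\infty$-complete graph, though stating $S \cup A \subseteq G$ explicitly would have made the ``$S$ is a solution'' step cleaner, since the paper's definition of solution requires $SA$ to be $P_\infty$-complete, not merely $S$).
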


\begin{proof}
Work in $P_1$. Fix any element $(a,b)$ with $a,b \notin C,D$ and set $W_0 := \{ (y,z) \in P_1 : \neg P_2((y,z), (a,b)) \}$. 
Thus $W_0 = \{ (b,z) : z \neq b \} \cup \{ (y,a) : y \neq a \}$. So the only $P_2$-inconsistency among elements of $W_0$ comes from 
pairs of the form $(b,c), (c,a)$; thus, writing Greek letters for the elements of $P_1$,
\[ (\forall \eta \in W_0) (\exists \nu \in W_0)(\forall \zeta \in W_0)\left(\neg P_2(\eta, \zeta) \rightarrow \zeta = \nu \right) \] 
\vspace{1mm}

\noindent In other words, $W := W_0 \setminus \{(b,a)\}$ is an $(\omega, 2)$-array (Definition \ref{diagrams-arrays}). Moreover:
\br
\begin{enumerate}
\item $(y,z),(w,v) \in W$ and $\neg P_2((y,z), (w,v))$ implies $y=v$ or $z=w$, and
\item for any $c \neq a,b$, there are $d,e \in M$ such that $(d,c), (c,e) \in W$. Thus:
\item we may choose a maximal complete $P_2$-subgraph $C$ of $W$ such that $CA$ is a complete $P_\infty$-graph. 
For instance, let $C$ be any maximal complete extension of $\{ (b,d) : d \in D \} \cup \{ (c, a) : c \in C \}$. 
Call any such $C$ a \emph{solution} of the array $W$.  
\end{enumerate}
\br
Let $S$ be a new predicate which names this solution $C$ of $W$. Then 
$\{ y \in P_1 : z \in S \rightarrow P_2(y,z) \} \supset A$ is a $P_2$-complete graph, definable in $\mathcal{L} \cup \{S\}$.
Support 2 implies that it is a $P_\infty$-graph. 
Notice that by (2), we have in fact chosen a maximal consistent extension of $A$ (i.e. a complete global type).
\end{proof} 

\begin{rmk}
The idiosyncracies of this proof, e.g. the choice of a \emph{definable} $(\omega,2)$-array, reflect an interest in structure
which will be preserved in ultrapowers. 
\end{rmk}

\begin{expl2} (Coding complexity into the sequence) \label{coding-complexity} 
\end{expl2} 
It is often possible to choose a formula $\vp$ so that some particular configuration appears in its characteristic sequence. 
For instance, by applying the template below when $\vp$ has the independence property, 
we may choose a simple unstable $\theta$ whose $P_2$ is universal for finite bipartite graphs
$(X,Y)$, provided we do not specify whether or not edges hold between $x,x^\prime \in X$ or between $y, y^\prime \in Y$. 
Nonetheless, Conclusion \ref{ps-stable} below will show this is ``inessential'' structure in the case of simple theories: 
whatever complexity was added through coding can be removed through localization.  

\emph{The construction.}
Fix a formula $\vp$ of $T$. Let $\theta(x;y,z,w) :=  (z=w \land x=y) \lor (z\neq w \land \vp(x;y))$. 
Write $(y,*)$ for $(y,z,w)$ when $z=w$, and $(y,-)$ for $(y,z,w)$ when $z\neq w$. 
Let $\langle P_n \rangle$ be the characteristic sequence of $\theta$,
$\langle P^\vp_n \rangle$ be the characteristic sequence of $\vp$,
and $\langle P^=_n \rangle$ be the characteristic sequence of $x=y$.
Then $P_n$ can be described as follows:

\begin{itemize}
\item  $P_n((y_1,-), \dots (y_n,-)) \leftrightarrow P^\vp_n(y_1,\dots y_n)$. 

\item $P_n((y_1,*), \dots (y_n,*)) \leftrightarrow P^=_n(y_1,\dots y_n)$. 

\item Otherwise, the $n$-tuple $ y := ((y_1,z_1),\dots (y_n, z_n))$ can contain
(up to repetition) at most one $*$-pair, so $z_i = z_j = *$ $\rightarrow y_i = y_j$. 
In this case the unique $y_*$ in the $*$-pair is the realization of some $\vp$-type
in the original model $M$ of $T$, and 
\\ $P_{n+1}((y_*, *), (y_1,-),\dots (y_n, -))$ 
holds iff $M \models \bigwedge_{j\leq n} \vp(y^*; y_j)$. 
\end{itemize}

\begin{rmk} \label{rmk-coding-complexity}
This highlights an important distinction: the fact that a characteristic sequence may contain a bipartite graph is not anywhere
near as powerful as the fact of containing a random graph, see Example \ref{maximal-example} below. 
In the coding just given we could not choose how elements within each side of the graph interrelated. This is quite restrictive, and 
eludes our coding for deep reasons: for instance, applying a consistency result of Shelah on the Keisler order one can show that 
the order property in the characteristic sequence cannot imply the \emph{compatible} order property in the characteristic sequence, Definition \ref{c-op} below \cite{mm-thesis}. 
\end{rmk}

\begin{expl2} 
(A theory with $TP_2$) \label{ex-tree} 
\end{expl2} 
$TP_2$ is Shelah's tree property of the second kind, to be defined and discussed in detail in Definition \ref{TP1-TP2}.
Let $T$ be the model completion of the following theory \cite{ShUs}. There are two infinite sorts $X, Y$ and a single
parametrized equivalence relation $E_x(y,z)$, where $x \in X$, and $y,z \in Y$. 
Let $\vp_{eq} := \vp(y;xzw) = E_x(y,z) \land \neg E_x(z,w)$. Then: 

\begin{itemize}
\item $P_1 ((xzw)) \iff z\neq w$.
\item $P_2 ((x_1 z_1 w_1), (x_2 z_2 w_2)) \iff$ each triple is in $P_1$ and furthermore:
\[ (x_1 = x_2) \rightarrow (  ~E_x(z_1,z_2) \land \bigwedge_{i\neq j \leq 2} \neg E_x(w_i, z_j) ) \]
\end{itemize}

The sequence has support 2. 
There are many empty graphs; these persist under localization (Theorem \ref{char-simple}). 
One way to see the trace of $TP_2$ is as follows. 
Fixing $\alpha$, choose $a_i ~ (i<\omega)$ to be a set of representatives
of equivalence classes in $E_\alpha$, and choose $b$ such that $\neg E_\alpha(a_i,b) ~(i<\omega)$. Then 
$\{ (\alpha, a_i, b) : i<\omega \} \subset P_1$ is a $P_2$-empty graph. We in fact have arrays $\{ (\alpha^t, a^t_i, b^t): i<\omega, t<\omega \}$
whose ``columns'' (fixing $t$) are $P_2$-empty graphs and where every path which chooses exactly one element from each column is 
a $P_2$-complete graph, thus a $P_\infty$-complete graph. The parameters in this so-called $(\omega, \omega)$-array describe $TP_2$ for $\vp_{eq}$
(Claim \ref{omega-omega-trees}).   

Note that a gap has appeared between the classification-theoretic complexity of $\vp$, which is not simple, and that of the formula $P_2$:

\begin{claim} $P_2$ does not have the order property.
\end{claim}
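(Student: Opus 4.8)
The plan is to argue directly from the explicit description of $P_1$ and $P_2$ given above. The usual shortcut---the order property in $P_2$ forces $\vp_4$ to divide, by Observation \ref{order-dividable}, after which one observes that no $\vp_\ell$ divides---is \emph{not} available here, since $\vp_{eq}$ is itself dividable (indeed $\vp_{eq}$ has $TP_2$). So instead I would exploit the feature, visible in the formula for $P_2$, that among elements of $P_1$ every failure of $P_2$ is \emph{localized to a single parameter} of the equivalence relation $E$.

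Suppose toward a contradiction that $P_2$ has the order property; by symmetry of $P_2$ (Observation \ref{bp}) the only nontrivial partition of its two argument variables $y_1, y_2$ yields triples $\langle e_i = (x_i, z_i, w_i) : i < \omega \rangle$ and $\langle f_j = (x'_j, z'_j, w'_j) : j < \omega \rangle$ with $P_2(e_i, f_j) \iff i < j$ (the case $i \le j$ is identical). First I would discard the elements outside $P_1$: since $P_2(e_i, f_{i+1})$ and $P_2(e_0, f_j)$ (for $j \ge 1$) hold, $e_i \in P_1$ for every $i$ and $f_j \in P_1$ for every $j \ge 1$, so $z_i \neq w_i$ and $z'_j \neq w'_j$. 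Now for $i \ge j \ge 1$ we have $\neg P_2(e_i, f_j)$ with both triples in $P_1$, so the description of $P_2$ forces $x_i = x'_j$. Taking $i > j$ together with $i = j$ and chasing these equalities yields $x'_1 = x'_2 = \cdots =: x^*$ and $x_i = x^*$ for all $i \ge 1$.

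Restricting to $i, j \ge 1$, every $X$-component now equals $x^*$, so the clause ``$x_1 = x_2 \rightarrow (\cdots)$'' in the definition of $P_2$ is always active, and for $i, j \ge 1$,
\[
P_2(e_i, f_j) \iff E_{x^*}(z_i, z'_j) \wedge \neg E_{x^*}(w_i, z'_j) \wedge \neg E_{x^*}(z_i, w'_j).
\]
Hence the formula $\chi(z, w; z', w') := E_{x^*}(z, z') \wedge \neg E_{x^*}(w, z') \wedge \neg E_{x^*}(z, w')$, with the single \emph{fixed} parameter $x^*$, has the order property, witnessed by $\langle (z_i, w_i) : i \ge 1 \rangle$ and $\langle (z'_j, w'_j) : j \ge 1 \rangle$. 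But $\chi$ is a Boolean combination of instances of the formula $E_{x^*}(u; v)$ for one fixed equivalence relation, and a single equivalence relation is stable; stability passes to Boolean combinations of instances of a stable formula, so $\chi$ cannot have the order property---a contradiction.

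The one delicate point is the passage from ``$\vp_{eq}$ is unstable and non-simple'' to ``$P_2$ is tame'': one must resist applying $E_x$ with $x$ allowed to vary (that formula has $IP$, which is precisely the source of $TP_2$), and the bookkeeping in the second paragraph is exactly what pins the hypothetical order property down to a single class-structure $E_{x^*}$, where stability is available. Everything else is routine unwinding of the displayed descriptions of $P_1$ and $P_2$.
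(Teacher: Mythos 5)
Your proof is correct and takes essentially the same approach as the paper: both arguments exploit the fact that, between two elements of $P_1$, a failure of $P_2$ forces the $X$-sort parameters to coincide, thereby collapsing the hypothetical order property onto a single fixed equivalence relation $E_{x^*}$. The only divergence is in the final step: the paper finishes concretely, using transitivity of $E_\alpha$ to manufacture a contradiction from three or four elements (splitting on whether the $a$- or $b$-sequence is an empty graph), whereas you invoke the more abstract facts that a single equivalence relation is stable and that stability is preserved under Boolean combinations of instances; this is slightly cleaner and avoids the case split. Your opening observation that the shortcut via Observation \ref{order-dividable} is unavailable here--because $\vp_{eq}$ is itself dividable, unlike in the random graph example--is correct and a useful piece of framing that the paper leaves implicit.
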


\begin{proof} 
This is essentially because inconsistency requires the parameters $x$ to coincide.
Suppose that $\langle a_i, b_i : i<\omega \rangle$ were a witness to the order property for $P_2$. Fix any $a_i =(\alpha_s, a_s, d_s)$. 
Now $\neg P_2(b_j, a_i)$ for $j<i$, where $b_j = (\beta_t, b_t, c_t)$. $P_2$-inconsistency requires $\alpha_s = \beta_t$. As this is 
uniformly true, $\alpha_s=\alpha_t=\beta_s=\beta_t$ for all $s,t <\omega$ in the sequence. 
But now that we are in a single equivalence relation $E_\alpha$,
transitivity effectively blocks order: $\neg P_2(b_j, a_i) \leftrightarrow \neg E_\alpha(a_s, b_t)$. Depending on whether at least one of the
$a$- or $b$-sequences is an empty graph, we can find a contradiction to the order property with either three or four elements. 
\end{proof}

\begin{expl2} (A maximally complicated theory) \label{maximal-example}
\end{expl2}
In this example the sequence is universal for finite $T_0$-configurations (Convention \ref{conv-configurations}),   
a natural sufficient condition for ``maximal complexity.''

Let the elements of $M$ be all finite subsets of $\omega$; the language has two binary relations, $\subseteq$ and $=$,
with the natural interpretation. Set $T = Th(M)$. 

Choose $\vp_\subseteq := \vp(x;y,z) = x \subseteq y \land x \not\subseteq z$. Then:

\begin{itemize}
\item $P_1((y,z)) \iff \emptyset \subsetneq y \not\subseteq z$.
\item $P_n((y_1,z_1),\dots (y_n,z_n)) \iff \emptyset \subsetneq \bigcap_{i\leq n} y_i \not\subseteq \bigcup_{i\leq n} z_i$.
\end{itemize}

The sequence does not have finite support. Moreover:

\begin{claim}
Let $\langle P_n \rangle$ be the characteristic sequence of $\vp_\subseteq$, $k<\omega$, and
let $X$ be a finite $T_0$-configuration. Then there exists a finite $A \subseteq P_1$
witnessing $X$.  
\end{claim}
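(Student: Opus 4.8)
The plan is to build $A$ explicitly from the formula for $P_n$, exploiting the fact that inside the model of all finite subsets of $\omega$ we have complete control over which ``witnessing points'' exist. First I would rewrite the incidence test: writing a parameter as $a=(y,z)$, the condition $\emptyset \subsetneq \bigcap_{i\le n} y_i \not\subseteq \bigcup_{i\le n} z_i$ says exactly that there is a single element $m$ with $m\in y_i$ and $m\notin z_i$ for every $i\le n$, i.e. $m\in y_i\setminus z_i$ for all $i$. So $P_n$ on a tuple of parameters is nothing but the existence of a common point of the sets $y_i\setminus z_i$, and the whole problem reduces to arranging, for each subset $\sigma$ of the vertex set, that such a common point does or does not exist.

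Now let $X=(V_X,E_X)$ be the given finite $T_0$-configuration, $V_X=n$. The only feature of ``$T_0$-configuration'' I need is that $E_X$ is downward closed under $\subseteq$ — forced by the monotonicity axioms (3) of Observation~\ref{bp} that define $T_0$ — together with $\emptyset\in E_X$ and (since we are placing $A$ inside $P_1$) $\{j\}\in E_X$ for every vertex $j$. I would then introduce a ground set
\[
W \;=\; \{\, m_\sigma : \emptyset\neq\sigma\in E_X \,\},
\]
one fresh point for each nonempty edge of $X$; after relabelling we may take $W$ to be a finite subset of $\omega$, so the sets below are elements of the model. For each vertex $j\in n$ put
\[
y_j \;=\; \{\, m_\sigma\in W : j\in\sigma \,\}, \qquad z_j \;=\; W\setminus y_j \;=\; \{\, m_\sigma\in W : j\notin\sigma \,\},
\]
and set $a_j=(y_j,z_j)$. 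The key point is that $y_j$ and $z_j$ partition $W$, so for $m\in W$ we have $m\in y_j\setminus z_j$ iff $m\in y_j$; hence $m_\tau$ is a common point of the $y_j\setminus z_j$ over all $j\in\sigma$ iff $\sigma\subseteq\tau$.

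Combining the two steps: for any $\sigma\subseteq n$, $P_{|\sigma|}$ holds on the subtuple of $(a_j)_{j\in n}$ indexed by $\sigma$ iff there is $\tau\in E_X$ with $\sigma\subseteq\tau$, and by downward closure of $E_X$ this is equivalent to $\sigma\in E_X$. Taking $\sigma$ a singleton gives $a_j\in P_1$ for each $j$, so that $A:=\{a_j:j\in n\}\subseteq P_1$; and the general equivalence is exactly the assertion that $(a_j)_{j\in n}$ witnesses $X$ in the sense of~(\ref{tzero-eqn}). I do not expect a genuine obstacle here. The one delicate direction is the negative one — verifying that no \emph{unintended} common point appears for some $\sigma\notin E_X$ — and this is precisely what dictates the two design choices: letting $W$ contain only one point per edge of $X$ (so the candidate witnesses form a controlled finite list) and defining $z_j$ as the complement of $y_j$ in $W$ (so ``avoiding $z_j$'' is exactly as restrictive as ``belonging to $y_j$'', which is what makes downward closure of $E_X$ match the realized pattern). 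The same construction run with an infinite ground set realizes an arbitrary, possibly infinite, $T_0$-configuration, which also substantiates the remark in Convention~\ref{conv-configurations} that every $T_0$-configuration occurs in some characteristic sequence.
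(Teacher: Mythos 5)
Your proof is correct and takes essentially the same approach as the paper's: neutralize the $z$-coordinates so that $y_j\setminus z_j=y_j$ (the paper takes the $z_j$ ``completely disjoint'', you take $z_j=W\setminus y_j$), and then arrange $\bigcap_{j\in\sigma}y_j\neq\emptyset\iff\sigma\in E_X$ using downward closure of $E_X$. You merely make explicit the one-point-per-edge construction of the $y_j$ that the paper leaves implicit.
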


\begin{proof}
Write the elements of $P_1$ as $w_i = (y_i, z_i)$;
it suffices to choose the positive pieces $y_i$ first, and afterwards take the $z_i$ to be completely disjoint.
More precisely, suppose $X$ is given by $V_X = m$ and $E_X \subset \mathcal{P}(m)$. We need simply to choose
$y_1,\dots y_m$ such that for all $\sigma \subseteq m$,
\[   \left( \bigcap_{j\in\sigma} y_j \neq \emptyset \right)  \iff \sigma \in E_X \]
which again, is possible by the downward closure of $E_X$.
\end{proof}

\begin{cor}
This characteristic sequence is universal for finite $T_0$-configurations.  
\end{cor}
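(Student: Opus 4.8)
The statement to prove is the Corollary: the characteristic sequence of $\vp_\subseteq$ is universal for finite $T_0$-configurations.

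The plan is to observe that ``universal for finite $T_0$-configurations'' is essentially a restatement of the preceding Claim, modulo unwinding the definition of universality. A characteristic sequence $\langle P_n \rangle$ is universal for finite $T_0$-configurations when every finite $T_0$-configuration $X$ (as in Convention \ref{conv-configurations}) is realized as a finite induced subgraph of the hypergraphs $P_n$ --- that is, there is a finite $A \subseteq P_1$ such that, setting $n = |A|$ and enumerating $A$, the pattern of which subsets $\sigma$ satisfy $P_{|\sigma|}$ matches exactly $E_X$. This is literally the conclusion of the Claim. So the proof is one line: apply the Claim (with, say, $k$ irrelevant, or $k = |V_X|$) to obtain the witnessing finite $A \subseteq P_1$, and note that by Convention \ref{conv-configurations} this $A$ realizing equation (\ref{tzero-eqn}) is exactly what it means for $X$ to embed into the sequence; since $X$ was an arbitrary finite $T_0$-configuration, the sequence is universal.

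The only genuine content to check is that nothing is lost in passing from the Claim's formulation (``there exists a finite $A \subseteq P_1$ witnessing $X$'') to the definition of universality --- in particular, that ``witnessing $X$'' in the Claim is meant in the sense of (\ref{tzero-eqn}), so that all arities $P_i$ for $i \leq |A|$ are correctly controlled, not merely $P_1$ and $P_2$. This follows because the Claim's proof explicitly verifies $\bigl(\bigcap_{j \in \sigma} y_j \neq \emptyset\bigr) \iff \sigma \in E_X$ for \emph{all} $\sigma \subseteq m$, and by the displayed description of $P_n$ for $\vp_\subseteq$ together with the disjointness of the $z_i$, the condition $\emptyset \subsetneq \bigcap_{i \in \sigma} y_i \not\subseteq \bigcup_{i \in \sigma} z_i$ reduces exactly to $\bigcap_{i \in \sigma} y_i \neq \emptyset$. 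Hence the induced $\mathcal{L}_0$-structure on $A$ is precisely $X$.

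I do not anticipate a real obstacle here; the corollary is immediate once one has spelled out that ``universal'' quantifies over all finite $T_0$-configurations and ``witnessing'' means realizing (\ref{tzero-eqn}). If anything, the only thing worth a sentence is the remark that this is why Example \ref{maximal-example} is the promised justification (cited back in Convention \ref{conv-configurations}) that every $T_0$-configuration occurs in \emph{some} characteristic sequence: this particular sequence already contains all of them simultaneously.

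\begin{proof}
By Convention \ref{conv-configurations}, to say that the sequence $\langle P_n \rangle$ is universal for finite $T_0$-configurations means precisely that every finite $T_0$-configuration $X = (V_X, E_X)$ is realized as an induced subgraph, i.e.\ that there is a finite $A \subseteq P_1$, say $A = \{ w_1, \dots, w_m \}$ with $m = |V_X|$, satisfying
\[
(\forall \sigma \subseteq m,\ \sigma = \{ \ell_1, \dots, \ell_i \})\ \bigl( P_i(w_{\ell_1}, \dots, w_{\ell_i}) \iff \sigma \in E_X \bigr).
\]
This is exactly the conclusion of the preceding Claim: given $X$, applying the Claim (with $k$ arbitrary) produces a finite $A \subseteq P_1$ with, writing $w_j = (y_j, z_j)$, the property $\bigl( \bigcap_{j \in \sigma} y_j \neq \emptyset \bigr) \iff \sigma \in E_X$ for all $\sigma \subseteq m$, where moreover the $z_j$ were chosen pairwise disjoint and disjoint from the $y_j$. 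By the displayed description of $P_n$ for $\vp_\subseteq$, the relation $P_i(w_{\ell_1}, \dots, w_{\ell_i})$ holds iff $\emptyset \subsetneq \bigcap_{j \in \sigma} y_j \not\subseteq \bigcup_{j \in \sigma} z_j$, which under that choice of $z_j$ reduces to $\bigcap_{j \in \sigma} y_j \neq \emptyset$; hence $P_i$ holds on the corresponding subtuple iff $\sigma \in E_X$. Thus $A$ witnesses $X$ in the required sense. Since $X$ was an arbitrary finite $T_0$-configuration, the characteristic sequence of $\vp_\subseteq$ is universal for finite $T_0$-configurations.
\end{proof}
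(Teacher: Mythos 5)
Your proof is correct and takes the same approach the paper intends: the corollary is an immediate unwinding of the definition of universality from the preceding Claim, and you rightly note there is no additional content beyond verifying that ``witnessing $X$'' in the Claim means realizing equation (\ref{tzero-eqn}).
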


\begin{rmk} \label{random-suffices}
That the sequence is universal for finite $T_0$-configurations is sufficient, though not necessary, for maximal complexity 
in the Keisler order. By \cite{Sh:c}.VI.3, $\vp(x;y,z) = y < x <z$ in $Th(\mathbb{Q}, <)$ is maximal. 
Its characteristic sequence has support 2, but its $P_2$ is clearly not universal. 
\end{rmk}

\section{Static configurations} \label{section:static-config}

This section establishes a series of correspondences between $T_1$-configurations found in the characteristic sequence
of $\vp$ and the classification-theoretic complexity of $\vp$ itself. It lays the groundwork for the next section,
which will build ``dynamic'' arguments out of these ``static'' ones by asking what happens when certain configurations
persist under all reasonable restrictions of the set $P_1$. 

Here we describe configurations which signal that $\vp$ has
the order property, the independence property, the tree property and $SOP_2$. Recall that:

\begin{defn} \emph{(Tree properties)} \label{TP1-TP2} Let $\subseteq$ indicate initial segment. 
To simplify notation, say that the nodes $\rho_1, \rho_2 \in \omega^{<\omega}$ are \emph{$^*$incomparable} if
\[ \neg(\rho_1 \subseteq \rho_2) \land \neg(\rho_2 \subseteq \rho_1) \land \neg (\exists \nu \in \omega^{<\omega}, i,j \in \omega)(\rho_1 = \nu^\smallfrown i, \rho_2=\nu^\smallfrown j)  \]
i.e., if they do not lie along the same branch and are not immediate successors of the same node. 

\br
\noindent Then the formula $\vp$ has:
\begin{itemize}

\item the \emph{$k$-tree property}, where $k<\omega$, if there is an $\omega^{<\omega}$-tree of instances of $\vp$ where paths are consistent
and the immediate successors of any given node are $k$-inconsistent, i.e. $X = \{ \vp(x; a_\eta) : \eta \in \omega^{<\omega} \}$, and:

\begin{enumerate}
\item for all $\nu \in \omega^\omega$, $\{ \vp(x;a_\eta) : \eta \subseteq \nu \}$ is a consistent partial type;
\item for all $\rho \in \omega^{<\omega}$, $\{ \vp(x;a_{\rho^\smallfrown i}): i<\omega \}$ is $k$-inconsistent.
\end{enumerate}

Call any such $X$ a \emph{$\vp$-tree}, or if necessary a $\vp$-$k$-tree.

\item the \emph{tree property} if it has the $k$-tree property for some $2 \leq k < \omega$. 

\item the \emph{non-strict tree property} \emph{$TP_2$} if there exists a $\vp$-tree with $k=2$ and for which, moreover: 

\begin{enumerate}
\item[$(3)_2$] for any two $^*$incomparable $\rho_1, \rho_2 \in \omega^{<\omega}$,
$\exists x( \vp(x; a_{\rho_1}) \land \vp(x; a_{\rho_2})  )$. 
\end{enumerate}

\item the \emph{strict tree property}, also known as $TP_1$ or $SOP_2$, if there exists a $\vp$-tree with $k=2$ and for which, moreover: 

\begin{enumerate}
\item[$(3)_1$] for any two $^*$incomparable $\rho_1, \rho_2 \in \omega^{<\omega}$, $\neg \exists x(\vp(x;a_{\rho_1}) \land \vp(x;a_{\rho_2}))$. 
\end{enumerate}

\end{itemize}
\end{defn}

\begin{thm-lit} \emph{(Shelah; see \cite{Sh:c}.III.7}) \label{thm-trees}
\begin{itemize}
\item $T$ is simple iff no formula $\vp$ of $T$ has the tree property, iff no $\vp$ has the $2$-tree property. 
\item If $\vp$ has the $2$-tree property then either $\vp$ has $TP_1$ or $\vp$ has $TP_2$. 
\end{itemize}
\end{thm-lit}

\noindent We fix a monster model $M$ from which the parameters are drawn; see Convention \ref{localization-depends-on-T}. 

\begin{defn} \emph{(Diagrams, arrays, trees)} \label{diagrams-arrays}
Let $\lambda \geq \mu$ be finite cardinals or $\omega$. Write $\subseteq$ to indicate initial segment.  
The sequence $\langle P_n \rangle$ has:

\begin{enumerate}
\item  \emph{an $(\omega,2)$-diagram} if there exist elements $\{ a_\eta : \eta \in 2^{<\omega}\} \subseteq P_1$
such that 
\begin{itemize}
\item for all $\eta \in 2^{<\omega}$, $\neg P_2(a_{\eta^\smallfrown 0}, a_{\eta^\smallfrown 1})$, and
\item for all $n < \omega$ and $\eta_1,\dots \eta_n \in 2^{<\omega}$,  we have that $\eta_1 \subseteq \dots \subseteq \eta_n \implies P_n(a_{\eta_1},\dots a_{\eta_n})$
\end{itemize}

\noindent That is, sets of pairwise comparable elements are $P_\infty$-consistent, while immediate successors of the same node are $P_2$-inconsistent.

\br
\item 
\emph{a $(\lambda, \mu, 1)$-array} if 
there exists $X = \{ a^m_{l} : l < \lambda, m < \mu \} \subset P_1$
such that: 

\begin{itemize}
\item $P_2(a^{m_1}_{l_1}, a^{m_2}_{l_2}) \iff \left(l_1 = l_2 \rightarrow m_1 = m_2 \right)$
\item For all $i < \omega$, 
\[ P_n(a^{m_1}_{l_1}, \dots a^{m_n}_{l_n}) \iff \bigwedge_{1\leq i,j \leq n} P_2(a^{m_i}_{l_i}, a^{m_j}_{l_j}) \]
\end{itemize}

\noindent That is, any $C \subset X$, possibly infinite, is a $P_\infty$-graph iff it contains no more than one element
from each column. (We will relax this last condition in the more general Definition \ref{omega-n-arrays} below.) 

\br
\item 
\emph{a $(\lambda, \mu)$-tree} if there exist elements $\{ a_\eta : \eta \in \mu^{<\lambda} \} \subset P_1$
such that 
\begin{itemize}
\item for all $\eta_2, \eta_2 \in \mu^{<\lambda}$, 
\[ P_2(a_\eta, a_\nu) \iff \left(\eta_1 \subseteq \eta_2 \lor \eta_2 \subseteq \eta_1 \right) \]
\noindent i.e. \emph{only if} the nodes are comparable; and
\item for all $n < \omega, \eta_1,\dots \eta_n \in \mu^{<\lambda}$, 
\[ \eta_1 \subseteq \dots \subseteq \eta_n  \implies P_n(a_{\eta_1},\dots a_{\eta_n}) \]
\end{itemize}
\end{enumerate}
\end{defn}

\begin{rmk}
Diagrams are prototypes which can give rise to either arrays or trees, in the case where the unstable formula $\vp$ has
the independence property or $SOP_2$, respectively. 

The arrays will be revisited in Definitions \ref{omega-2-arrays} and \ref{omega-n-arrays}.
\end{rmk}

\begin{claim} \label{diagrams-instability}
Let $\vp$ be a formula of $T$ and set $\theta(x;y,z) = \vp(x;y) \land \neg \vp(x;z)$.
Let $\langle P_n \rangle$ be the characteristic sequence of $(T,\theta)$. 
The following are equivalent:

\begin{enumerate}
\item $\langle P_n \rangle$ has an $(\omega, 2)$-diagram.
\item $R(x=x, \vp(x;y), 2) \geq \omega$, i.e. $\vp$ is unstable.
\item $R(x=x, \theta(x;yz), 2) \geq \omega$, i.e. $\theta$ is unstable.
\end{enumerate}
\end{claim}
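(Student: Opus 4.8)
The plan is to prove the cycle $(2)\Rightarrow(3)\Rightarrow(1)\Rightarrow(2)$, or rather to go through $(1)$ as the central object and relate it to both rank conditions. First I would record the easy equivalence $(2)\Leftrightarrow(3)$: since $\theta(x;y,z)=\vp(x;y)\wedge\neg\vp(x;z)$ is a Boolean combination of instances of $\vp$, and conversely $\vp(x;y)$ is recovered from $\theta$ using the convention in Definition \ref{characteristic-sequence} (there is $z$ with $\vp(x;z)\leftrightarrow\neg\vp(x;y)$, so $\vp(x;y)\equiv\theta(x;z,y)$ up to renaming), each of $\vp,\theta$ is an instance-combination of the other over any parameter set. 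Hence the binary trees of $2$-rank witnesses translate back and forth, giving $R(x=x,\vp,2)\geq\omega \iff R(x=x,\theta,2)\geq\omega$; this is standard and I would cite \cite{Sh:c}.II for the fact that $R(x=x,\vp,2)\geq\omega$ is equivalent to the order property / instability of $\vp$.

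Next, $(1)\Rightarrow(2)$. Suppose $\langle P_n\rangle$ has an $(\omega,2)$-diagram $\{a_\eta:\eta\in 2^{<\omega}\}\subseteq P_1$. Each $a_\eta$ is a parameter for $\theta$, and by Observation \ref{bp}(5) the branch condition ``$\eta_1\subseteq\dots\subseteq\eta_n\Rightarrow P_n(a_{\eta_1},\dots,a_{\eta_n})$'' says exactly that $\{\theta(x;a_\eta):\eta\subseteq\nu\}$ is a consistent partial type for every branch $\nu\in 2^\omega$, while $\neg P_2(a_{\eta^\smallfrown 0},a_{\eta^\smallfrown 1})$ says $\{\theta(x;a_{\eta^\smallfrown 0}),\theta(x;a_{\eta^\smallfrown 1})\}$ is inconsistent. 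That is a binary tree of instances of $\theta$ with consistent branches and inconsistent immediate-successor pairs, which is precisely a witness that $R(x=x,\theta(x;yz),2)\geq\omega$ (one defines the rank by splitting over $2$-inconsistent pairs and reads off a binary tree of infinite height). Writing $a_\eta=(b_\eta,c_\eta)$, pushing this tree through the translation $\theta(x;b,c)=\vp(x;b)\wedge\neg\vp(x;c)$ from the previous paragraph yields instability of $\vp$, giving $(2)$ as well. So $(1)\Rightarrow(3)$ and $(1)\Rightarrow(2)$.

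Finally, $(2)\Rightarrow(1)$, which I expect to be the main obstacle. Instability of $\vp$ gives, by the standard equivalence, a formula with the order property, hence (after passing to $\theta$) a binary tree $\{d_\eta:\eta\in 2^{<\omega}\}$ of instances of $\theta$ with all branches consistent and immediate successors pairwise inconsistent — essentially already the $(\omega,2)$-diagram. The subtlety is matching conventions: the characteristic sequence is built from $\theta$, so the $a_\eta\in P_1$ of a $\theta$-diagram are just the parameter tuples $d_\eta$, and I must check that $P_n$ applied along a chain means ``$\exists x\bigwedge\theta(x;d_{\eta_i})$'', i.e. the partial $\theta$-type is consistent — which is the branch hypothesis of the order-property tree — and that $\neg P_2$ on a splitting pair is inconsistency of that pair, which is the splitting hypothesis. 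One wrinkle is that the order property a priori yields the tree only with $k=2$ inconsistency for \emph{some} fixed conjunction or after the standard manipulation turning an instability witness into a properly splitting binary tree (Shelah's reduction from the $2$-rank being infinite to an actual infinite binary tree of $2$-inconsistent splits); I would invoke that reduction (\cite{Sh:c}.II, the characterization of the order property via the binary tree of $2$-inconsistency) rather than reprove it. Once the tree is in hand with the right arities, the monotonicity clause of Observation \ref{bp}(3) upgrades consistency of each finite chain to the needed $P_n$'s, and we have the $(\omega,2)$-diagram, closing the cycle.
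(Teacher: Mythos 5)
Your cycle of implications and overall plan match the paper's, but the one direction where the paper does genuine work is the one you hand off to a citation, and the citation as you describe it does not quite exist.

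The issue is $(2)\Rightarrow(1)$. You say instability yields "a binary tree $\{d_\eta\}$ of instances of $\theta$ with consistent branches and immediate successors pairwise inconsistent," attributing this to a standard manipulation of the $2$-rank. But the $2$-rank definition gives a tree where each split is by an \emph{explicitly contradictory} pair $\vp(x;c_\nu)$ versus $\neg\vp(x;c_\nu)$ — a positive and a negative instance of $\vp$ — not by two instances $\theta(x;d_0),\theta(x;d_1)$ that happen to be jointly inconsistent. Since $\neg\theta(x;y,z) = \neg\vp(x;y)\lor\vp(x;z)$ is not itself an instance of $\theta$, the needed $P_2$-emptiness ($\neg\exists x(\theta(x;d_0)\wedge\theta(x;d_1))$) does not fall out of the rank tree directly. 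The paper's proof bridges exactly this gap: it extracts the splitting witnesses $c_\nu$ from the rank tree, thins the index set $2^{<\omega}$ to a sparse $\mathcal{S}$ so that reused parameters do not collide along branches, and then defines $a_{s^\smallfrown 0}=(c_{s^\smallfrown 0}, c_s)$ and $a_{s^\smallfrown 1}=(c_s, c_{s^\smallfrown 1})$, which forces $\neg P_2$ on the sibling pair because one $\theta$-instance demands $\neg\vp(x;c_s)$ and the other demands $\vp(x;c_s)$, while branch consistency is inherited from $\mathcal{R}$. This pair-building is the content of the implication, not bookkeeping; if you want to cite something, you would at minimum need to justify the move from $\pm\vp$-splits to $\theta$-versus-$\theta$ splits, and the cleanest way to do so is the paper's explicit construction.

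A smaller remark on $(1)\Rightarrow(3)$: you observe that sibling inconsistency plus branch consistency "is precisely a witness that $R(x=x,\theta,2)\geq\omega$." The rank actually demands explicitly contradictory extensions, i.e.\ $\pm\theta(x;a)$, whereas the diagram gives only inconsistency of $\{\theta(x;a_{\eta^\smallfrown 0}),\theta(x;a_{\eta^\smallfrown 1})\}$. The paper handles this by noting one may freely add $\neg\theta(x;a_{\eta^\smallfrown i})$ to $p_{\eta^\smallfrown j}$ for $i\neq j$ since that formula is already implied; that one-line remark is worth making explicit rather than folding into "precisely."

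Your $(2)\Leftrightarrow(3)$ argument agrees with the paper (the paper states it via $|S_\vp(A)|=|S_\theta(A)|$ for $|A|\geq 2$; your Boolean-combination translation is the same fact unpacked). So the proposal takes the same route as the paper, modulo the two glosses above, of which the first is a genuine gap you should fill with the pair construction rather than a citation.
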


\begin{proof}
(2) $\rightarrow$ (1): We have in hand a tree of partial $\vp$-types $\mathcal{R} = \{ p_\nu : \nu \in 2^\omega \}$, 
partially ordered by inclusion, witnessing that $R(x=x,\vp,2) \geq \omega$. Let us show that we can build an $(\omega, 2)$-diagram.
That is, we shall choose parameters $\{ a_\eta : \eta \in 2^{<\omega} \} \subset P_1$ satisfying Definition \ref{diagrams-arrays}(1).

First, by the definition of the rank $R$, which requires the partial types to be explicitly contradictory, 
we can associate to each $\nu$ an element $c_\nu \in M$, $\ell(c_\nu) = \ell(y)$ such that:
\begin{itemize}
\item $\vp(x;c_\nu) \in p_{\nu^\smallfrown 1} \setminus p_{\nu}$, and
\item $\neg \vp(x;c_\nu) \in p_{\eta^\smallfrown 0} \setminus p_{\eta}$.
\end{itemize}
\noindent i.e., the split after index $\nu$ is explained by $\vp(x;c_\nu)$.

Second, choose a set of indices $\mathcal{S} \subseteq 2^{<\omega}$ such that:
\begin{itemize}
\item $(\forall \eta \in 2^{<\omega}) ~(\exists s \in \mathcal{S}) (\eta \subsetneq s) $
\item $(\forall s_1 \subsetneq s_2 \in \mathcal{S}) ~(\exists\eta \notin \mathcal{S}) ~(s_1 \subsetneq \eta \subsetneq s_2) $
\end{itemize}

It will suffice to define $a_{s^\smallfrown i}$ for $s \in \mathcal{S}$, $i \in \{0,1\}$. 
(The sparseness of $S$ ensures the chosen parameters for $\vp$ won't overlap, which will make renumbering straightforward.)
Recall that the $a_\eta$ will be parameters for $\theta(x;y,z) = \vp(x;y) \land \neg(x;z)$. So we define:
\begin{itemize}
\item $a_{s^\smallfrown 0} = (c_{s^\smallfrown 0}, c_s)$;
\item $a_{s^\smallfrown 1} = (c_s, c_{s^\smallfrown 1})$. 
\end{itemize}

The consistency of the paths through our $(\omega, 2)$-diagram is inherited from the tree $\mathcal{R}$ of consistent partial types. 
However, $\neg P_2(a_{s^\smallfrown 0}, a_{s^\smallfrown 1})$ because these contain an explicit contradiction:
\[ \neg \exists x \left( \vrt (\vp(x;c_{s^\smallfrown 0}) \land \neg \vp(x;c_s)) \land (\vp(x;c_s) \land \neg(\vp(x;c_{s^\smallfrown 1}) \right)  \]

(1) $\rightarrow$ (3): Reading off the parameters from the diagram we obtain a tree of consistent partial $\theta$-types
$\{ p_\eta : \eta \in 2^{<\omega} \}$, partially ordered by inclusion. For any $\eta \in 2^{<\omega}$, 
$\neg P_2(a_{\eta^\smallfrown 0}, a_{\eta^\smallfrown 1})$, i.e. 
$\neg \exists x(\theta(x;a_{\eta^\smallfrown 0}) \land \theta(x;a_{\eta^\smallfrown 1}))$. 
Furthermore, $\theta(x;a_{\eta^\smallfrown 0}) \in p_{\eta^\smallfrown 0} \setminus p_\eta$,
while $\theta(x;a_{\eta^\smallfrown 1}) \in p_{\eta^\smallfrown 1}\setminus p_\eta$. So there is no harm in making the types
explicitly inconsistent, as the rank $R$ requires, by adding $\neg~ \theta(x;a_{\eta^\smallfrown i})$ to $p_{\eta^\smallfrown j}$ for
$i\neq j < 2$.

(2) $\leftrightarrow$ (3): for all $A$, $|A| \geq 2$, $|S_\vp(A)| = |S_\theta(A)|$. 
\end{proof}

\begin{claim} \label{ip-in-cs}
Let $\vp$ be a formula of $T$ and set $\theta(x;y,z) = \vp(x;y) \land \neg \vp(x;z)$.
Let $\langle P_n \rangle$ be the characteristic sequence of $(T,\theta)$. 
The following are equivalent:
\begin{enumerate}
\item $\langle P_n \rangle$ has an $(\omega, 2, 1)$-array.
\item $\vp$ has the independence property. 
\item $\theta$ has the independence property.  
\end{enumerate}
\end{claim}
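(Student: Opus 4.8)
The plan is to prove the cycle $(2)\Rightarrow(3)\Rightarrow(1)\Rightarrow(2)$, mirroring the structure of the preceding Claim \ref{diagrams-instability}. The easy edge is $(2)\Leftrightarrow(3)$: for any set of parameters $A$ with $|A|\geq 2$ we have $|S_\vp(A)|=|S_\theta(A)|$, and more to the point the independence property is witnessed by a formula iff it is witnessed by the associated $\theta(x;y,z)=\vp(x;y)\land\neg\vp(x;z)$ — indeed $\theta$ is a conjunction involving $\vp$, so IP for $\theta$ gives IP for $\vp$ directly, and conversely if $\vp$ has IP then by choosing a fixed ``off'' parameter $z_0$ one sees $\theta$ does too. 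So I would dispatch this in a line and concentrate on the genuinely new content, the equivalence of the array with IP.

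For $(3)\Rightarrow(1)$: assume $\theta$ has the independence property, so there are parameters $\langle b_\eta : \eta\in{}^{<\omega}\omega\rangle$ — or more simply an indiscernible sequence $\langle b_j : j<\omega\rangle$ together with, for every finite $w\subseteq\omega$, an element $c_w$ with $\models\theta(c_w;b_j)$ iff $j\in w$. I want to manufacture an $(\omega,2,1)$-array, i.e. a family $\{a^m_\ell : \ell<\omega, m<\omega\}\subseteq P_1$ with $P_2(a^{m_1}_{\ell_1},a^{m_2}_{\ell_2})$ iff ($\ell_1=\ell_2\to m_1=m_2$), and where higher $P_n$'s are determined by the $P_2$ pattern. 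The natural move: arrange the witnessing parameters of the IP into $\omega$ columns, where the elements of a single column $\ell$ encode ``mutually exclusive'' choices (realized by those $c_w$ that select at most one index from column $\ell$), so that two elements in the same column have no common $\theta$-witness (giving $\neg P_2$ within a column) while a transversal selecting one element per column is $\theta$-consistent (giving $P_\infty$ across columns). Concretely one can take the $\ell$-th column to consist of parameters coding the condition ``$x$ satisfies $\vp$ at exactly this point of the $\ell$-th block and not at the others in the block''; using the extraction of an indiscernible sequence from the IP pattern (via Ramsey/compactness) one gets the uniformity needed for the clause determining $P_n$ from $P_2$. The clean way to get that last clause is to first pass to an indiscernible array, so that $P_n$ holding on a transversal reduces to the $2$-element consistency statements by a compactness argument identical in spirit to Observation \ref{bp}(4) and Claim \ref{diagrams-instability}.

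For $(1)\Rightarrow(2)$: given an $(\omega,2,1)$-array $X=\{a^m_\ell\}$, read the $a^m_\ell$ as parameters for $\theta$ and hence produce, for each choice function $f:\omega\to\omega$, a $\theta$-type $\{\theta(x;a^{f(\ell)}_\ell):\ell<\omega\}$ which is consistent because the transversal $\{a^{f(\ell)}_\ell\}$ is a $P_\infty$-complete graph (Observation \ref{bp}(5)); meanwhile any two elements in the same column are $P_2$-inconsistent, i.e. $\neg\exists x(\theta(x;a^0_\ell)\land\theta(x;a^1_\ell))$. Then for a finite set $w\subseteq\omega$, picking $f$ with $f(\ell)=0$ for $\ell\in w$ and $f(\ell)=1$ otherwise, and noting that $\theta(x;a^0_\ell)$ implies $\neg\theta(x;a^1_\ell)$ for the same $\ell$, one sees that the array shatters: $\{\theta(x;a^0_\ell):\ell<\omega\}$ is independent because each of its instances can be independently turned on (via column $\ell$, index $0$) or off (index $1$, which is inconsistent with index $0$). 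Hence $\theta$, and so $\vp$, has IP.

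The main obstacle will be $(3)\Rightarrow(1)$: extracting from a bare IP pattern an array with the \emph{exact} $P_2$-pattern ``$\neg P_2$ exactly within columns'' and, simultaneously, the homogeneity that forces all higher $P_n$ to be computed from $P_2$. Getting $\neg P_2$ to hold precisely between same-column pairs (and $P_2$ to hold between all cross-column pairs) requires choosing the column-elements so that cross-column conjunctions remain satisfiable — this is where one must use the full strength of IP (arbitrary finite ``on-sets'' are realized), not merely the order property, and then an indiscernibility/Ramsey reduction to collapse the $P_n$ conditions. I expect the argument to run parallel to the random-graph computation in Example \ref{expl-r-g} and to the diagram construction in Claim \ref{diagrams-instability}, with the sparseness-of-indices trick there replaced by a block-coding of $\omega$ into $\omega$ many columns here.
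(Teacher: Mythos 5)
Your cycle $(2)\Rightarrow(3)\Rightarrow(1)\Rightarrow(2)$ is a valid alternative to the paper's $(1)\Rightarrow(3)\Rightarrow(2)\Rightarrow(1)$, and your $(1)\Rightarrow(2)$ is essentially the paper's $(1)\Rightarrow(3)$ (read off from Observation \ref{omega-k-ip}: take a transversal containing the desired ``on'' column entries and avoiding the ``off'' ones, realize the corresponding type, and use the within-column $P_2$-inconsistency to pin down the trace). Likewise your $(2)\Leftrightarrow(3)$ by type-counting matches the paper. Where you diverge is in building the array: you start from the IP of $\theta$ and flag $(3)\Rightarrow(1)$ as the hard direction, gesturing at a Ramsey/indiscernibility reduction and a vague ``block-coding.'' The paper instead goes $(2)\Rightarrow(1)$, which is a one-line direct encoding with no Ramsey needed: given a sequence $\langle i_\ell\rangle$ over which $\vp$ has IP, set $a^0_\ell=(i_{2\ell},i_{2\ell+1})$ and $a^1_\ell=(i_{2\ell+1},i_{2\ell})$ as $\theta$-parameters. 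Within a column the two entries assert $\vp(x;i_{2\ell})\land\neg\vp(x;i_{2\ell+1})$ and its negation-flip, so $\neg P_2$; across columns every pattern is realized because $\vp$ has IP, and the ``higher $P_n$ determined by $P_2$'' clause comes for free from the same fact. Since you already have $(3)\Leftrightarrow(2)$ in hand, it would be cleaner to reroute your hard direction through $\vp$'s IP in exactly this way rather than trying to massage an array out of the $\theta$-witnesses directly; the $\theta$-witnesses are pairwise $P_2$-compatible by definition of IP, so they do not of themselves give you the within-column exclusivity you need, and that is precisely what the $(i_{2\ell},i_{2\ell+1})$ vs.\ $(i_{2\ell+1},i_{2\ell})$ encoding supplies. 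One small slip: an $(\omega,2,1)$-array has $m<2$, not $m<\omega$.
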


\begin{proof}
(1) $\rightarrow$ (3): This is Observation \ref{omega-k-ip}. (Essentially, let $A_0$ be the top row of the array $A$,
and $\sigma, \tau \subset A$ finite disjoint; let $B \subset A$ be a maximal positive base set, i.e. a maximal $P_\infty$-complete graph, in $A$ 
containing $\sigma$ and avoiding $\tau$. Then any realization of the type corresponding to $B$ is a witness to this instance of independence.)

(2) $\rightarrow$ (1): Let $\langle i_\ell : \ell<\omega \rangle$ be a sequence over which $\vp$ has the
independence property. For $t<2, j<\omega$ set $a^0_j = (i_\ell, i_{\ell+1})$, $a^1_j = (i_{\ell+1}, i_\ell)$. 
Then $\{ a^t_j : t<2, j<\omega \}$ is an $(\omega, 2, 1)$-array for $P_\infty$.  

(3) $\rightarrow$ (2): For any infinite $A$, $|S_\vp(A)| = |S_\theta(A)|$, as any type on one side can be presented as a type
on the other.  The independence property can be characterized in terms of the cardinality of the space of types over finite sets 
(\cite{Sh:c} Theorem II.4.11).
\end{proof}

\begin{claim} \label{omega-omega-trees}
Let $\vp$ be a formula of $T$ and set $\theta(x;y,z) = \vp(x;y) \land \neg \vp(x;z)$.
Let $\langle P_n \rangle$ be the characteristic sequence of $(T,\theta)$. 
Suppose that $T$ does not have $SOP_2$. Then the following are equivalent:
\begin{enumerate}
\item $\langle P_n \rangle$ has an $(\omega, \omega, 1)$-array.
\item $\vp$ has the $2$-tree property. 
\end{enumerate}
\end{claim}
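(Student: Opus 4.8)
The plan is to recognise an $(\omega,\omega,1)$-array in the characteristic sequence of $\theta$ as nothing other than an array witnessing $TP_2$ for $\theta$, and then to transfer $TP_2$ between $\theta$ and $\vp$. Here is the reduction. By Theorem \ref{thm-trees} together with the hypothesis that no formula of $T$ has $TP_1=SOP_2$, a formula of $T$ has the $2$-tree property if and only if it has $TP_2$; so $(2)$ is equivalent to ``$\vp$ has $TP_2$''. Unwinding Definition \ref{diagrams-arrays}(2) and the definition of the $P_n$ of $\theta$, an $(\omega,\omega,1)$-array $\{a^m_l : l,m<\omega\}$ in $\langle P_n\rangle$ is exactly an array witnessing $TP_2$ for $\theta$ with $2$-inconsistency: its columns are the pairwise $2$-inconsistent families $\{\theta(x;a^m_l) : m<\omega\}$, while its complete $P_\infty$-subgraphs --- which by the array axioms contain at most one element per column --- are precisely the consistent partial $\theta$-types. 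Using the standard passage (\cite{Sh:c}.III.7) between the tree form of $TP_2$ in Definition \ref{TP1-TP2} and this $2$-inconsistent array form, $(1)$ is equivalent to ``$\theta$ has $TP_2$''. Thus it suffices to show that $\vp$ has $TP_2$ if and only if $\theta$ has $TP_2$.

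For $(2)\Rightarrow(1)$, suppose $\vp$ has $TP_2$ and fix an array $\{b^m_l : l,m<\omega\}$ witnessing it, so that each row $\{\vp(x;b^m_l):m<\omega\}$ is pairwise inconsistent and each vertical selection $\{\vp(x;b^{f(l)}_l):l<\omega\}$ is consistent. Set $a^m_l := (b^m_l,b^{m+1}_l)$. Pairwise inconsistency of row $l$ yields $T\vdash\forall x\,(\vp(x;b^m_l)\rightarrow\neg\vp(x;b^{m+1}_l))$, hence $T\vdash\forall x\,(\theta(x;a^m_l)\leftrightarrow\vp(x;b^m_l))$, so $\{a^m_l\}$ inherits from $\{b^m_l\}$ exactly the structure of an array witnessing $TP_2$ for $\theta$ --- equivalently, by the reduction, an $(\omega,\omega,1)$-array in $\langle P_n\rangle$. (The remaining clause of Definition \ref{diagrams-arrays}(2), that $n$-consistency is decided by $P_2$, holds since a pairwise $P_2$-consistent subset of $\{a^m_l\}$ has at most one element in each column and so extends to a full vertical selection.)

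For $(1)\Rightarrow(2)$, suppose $\theta$ has $TP_2$ and fix a witnessing array $\{c^m_l\}$ with $c^m_l=(y^m_l,z^m_l)$. After passing to a mutually indiscernible sub-array (harmless for $TP_2$), the $2$-inconsistency of row $l$ is witnessed uniformly by a fixed pair of conjuncts of $\theta(x;c^{m_1}_l)\wedge\theta(x;c^{m_2}_l)$. If that pair is $\vp(x;y^{m_1}_l)\wedge\vp(x;y^{m_2}_l)$, then the positive coordinates $\{y^m_l\}$ are themselves an array witnessing $TP_2$ for $\vp$ (the vertical selections remain consistent, being subconjunctions of the consistent vertical $\theta$-types), and we are done. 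In the cross case $T\vdash\forall x\,(\vp(x;y^{m_1}_l)\rightarrow\vp(x;z^{m_2}_l))$ (or its mirror), the vertical realisations exhibit, along each column, instances of $\vp$ witnessing a half-graph order, and combining this with mutual indiscernibility one reconstructs an array witnessing $TP_2$ for $\vp$ once the strict, $^*$incomparable configuration --- which the no-$SOP_2$ hypothesis forbids --- has been excluded; the remaining case, $\neg\vp(x;z^{m_1}_l)\wedge\neg\vp(x;z^{m_2}_l)$, is handled by the same argument run on the negated formula. Alternatively, and more in the spirit of Claims \ref{diagrams-instability} and \ref{ip-in-cs}, one may bypass the case analysis altogether: $TP_2$ is detected by a rank computed from the finitary type spaces, and $S_\vp(A)$ and $S_\theta(A)$ are in natural bijection for $|A|\geq 2$, so $\theta$ has $TP_2$ if and only if $\vp$ does.

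The main obstacle is this last implication --- recovering the $2$-tree property of $\vp$ from that of $\theta=\vp(x;y)\wedge\neg\vp(x;z)$. The trouble is that the $2$-inconsistency of a row of a $\theta$-array may be forced by the negative coordinates $z^m_l$ rather than the positive coordinates $y^m_l$, so that the latter alone need carry no information about $\vp$; the role of the no-$SOP_2$ hypothesis is exactly to rule out the competing ``strict'' behaviour one meets when passing through that case. A minor, routine point is the shuttle between the tree and array formulations of $TP_2$, for which one invokes \cite{Sh:c}.III.7.
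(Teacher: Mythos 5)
Your reduction of both directions to the transfer $TP_2(\vp)\leftrightarrow TP_2(\theta)$, together with the identification of condition $(1)$ with $TP_2$ for $\theta$, is a sensible decomposition, and your explicit construction $a^m_l := (b^m_l, b^{m+1}_l)$ for $(2)\Rightarrow(1)$ is in fact more careful than the paper's sketch, which writes $\{a_\eta\}\subset P_1$ for $\vp$-parameters without indicating how they are to become pairs, i.e.\ $\theta$-parameters.

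There is, however, a genuine gap in your $(1)\Rightarrow(2)$, at precisely the step you correctly flag as the obstacle. The claim that, after passing to an indiscernible sub-array, the $2$-inconsistency of a column is witnessed by a fixed \emph{pair} of conjuncts of $\theta(x;c^{m_1}_l)\wedge\theta(x;c^{m_2}_l)$ is false. That conjunction has four $\vp$- or $\neg\vp$-conjuncts, and four conditions can be jointly inconsistent while every pair (indeed every triple) among them is consistent; mutual indiscernibility of the array makes the ``which subsets are inconsistent'' data uniform in $(m_1,m_2)$, but it does not force the inconsistency to localise to a pair. The subsequent cross-case is only sketched, and the proposed fallback --- transferring $TP_2$ between $\vp$ and $\theta$ via the bijection $S_\vp(A)\cong S_\theta(A)$, in the spirit of Claims \ref{diagrams-instability} and \ref{ip-in-cs} --- needs more than you give: those claims concern counting properties (instability, independence, characterised by $|S_\vp(A)|$), whereas the tree property is a dividing property, and the biject-ability of type spaces does not by itself yield a rank computation detecting $TP_2$ that transfers. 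It is fair to note that the paper's own one-line argument for $(1)\rightarrow(2)$ (``each column witnesses that $\vp$ is 2-dividable'') is also compressed on exactly this point, since the columns a priori witness $2$-dividability of $\theta$ rather than of $\vp$; so you have isolated a real subtlety, but your treatment of it does not go through as written.
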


\begin{proof}
(1) $\rightarrow$ (2) Each column (=empty graph) of the array witnesses that $\varphi$ is 2-dividable, 
and the condition that any subset of the array containing no more than one element from each column is a $P_\infty$-complete graph 
ensures that the dividing can happen sequentially. 

(2) $\rightarrow$ (1)
By Theorem \ref{thm-trees} above, $NSOP_2$ implies $\vp$ has $TP_2$. That is, there is a tree of instances
$\{ \vp(x;a_\eta) : \eta \in \omega^{<\omega} \}$ such that first, for any finite $n$, $\eta_1 \subseteq \dots \subseteq \eta_n$ implies that
the partial type $\{ \vp(x;a_{\eta_1}), \dots \vp(x;a_{\eta_n}) \}$ is consistent; and second,
\[  \neg \exists x \left( \vrt \vp(x;a_\eta) \land \vp(x;a_\nu) \right)  ~~\iff~~ (\exists\rho \in \omega^{<\omega})(\exists i \neq j \in \omega) 
\left(\vrt \eta = \rho^\smallfrown i  ~\land~ \nu = \rho^\smallfrown j  \right) \]
Thus the parameters $\{ a_\eta : \eta \in \omega^{<\omega} \} \subset P_1$ form an $(\omega, \omega,1 )$-array for $P_\infty$. 
\end{proof}

It is straightforward to characterize the analogous $k$-tree properties in terms of arrays whose columns are $k$-consistent but
$(k+1)$-inconsistent. 

\begin{claim} The following are equivalent:
\begin{enumerate}
\item $\langle P_n \rangle$ has an $(\omega, 2)$-tree.
\item $\vp$ has $SOP_2$. 
\end{enumerate}
\end{claim}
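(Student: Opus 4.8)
The plan is to prove both implications by pulling the required configuration back along an explicit embedding of trees; only compactness and the reflexivity/symmetry/monotonicity of $\langle P_n\rangle$ (Observation \ref{bp}) are needed, so the content is entirely bookkeeping with comparability of nodes. I argue with $\vp$ and its characteristic sequence directly, as in the statement; should one instead start from the characteristic sequence of $\theta(x;y,z)=\vp(x;y)\land\neg\vp(x;z)$, the passage is harmless exactly as in Claims \ref{diagrams-instability}--\ref{omega-omega-trees}, since $S_\vp(A)$ and $S_\theta(A)$ have the same size and $SOP_2$ is read off types.

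\emph{$(2)\Rightarrow(1)$.} Suppose $\vp$ has $SOP_2$, witnessed by a $\vp$-tree $\{\vp(x;a_\eta):\eta\in\omega^{<\omega}\}$ with $k=2$ and satisfying $(3)_1$ (Definition \ref{TP1-TP2}). I first observe that $P_2(a_\eta,a_\nu)$ holds iff $\eta$ and $\nu$ are comparable. If $\eta\subseteq\nu$, extending $\nu$ to a branch and using consistency of the branch type gives $\exists x(\vp(x;a_\eta)\land\vp(x;a_\nu))$, and more generally $P_n(a_{\eta_1},\dots,a_{\eta_n})$ along any chain $\eta_1\subseteq\dots\subseteq\eta_n$. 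If $\eta$ and $\nu$ are incomparable, then either they are $^*$incomparable, whence $\neg\exists x(\vp(x;a_\eta)\land\vp(x;a_\nu))$ by $(3)_1$, or they are immediate successors of a common node, whence the same by the $2$-inconsistency of immediate successors; either way $\neg P_2(a_\eta,a_\nu)$. Now let $f\colon 2^{<\omega}\hookrightarrow\omega^{<\omega}$ be the inclusion of the binary tree (a $0,1$-string read as an $\omega$-string), which preserves and reflects both $\subseteq$ and incomparability, and set $a'_\eta:=a_{f(\eta)}$. The two clauses of the $(\omega,2)$-tree (Definition \ref{diagrams-arrays}(3), $\lambda=\omega$, $\mu=2$) are then immediate.

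\emph{$(1)\Rightarrow(2)$.} Suppose $\langle P_n\rangle$ has an $(\omega,2)$-tree $\{a_\eta:\eta\in 2^{<\omega}\}\subset P_1$. Each $\vp(x;a_\eta)$ is consistent as $a_\eta\in P_1$; for each branch $\nu\in 2^\omega$ the type $\{\vp(x;a_\eta):\eta\subseteq\nu\}$ is consistent by Observation \ref{bp}(5) (its nodes form a chain, so $P_n$ holds on all finite subtuples); and for incomparable $\eta,\nu$ we have $\neg P_2(a_\eta,a_\nu)$, i.e. $\neg\exists x(\vp(x;a_\eta)\land\vp(x;a_\nu))$. To turn binary branching into $\omega$-branching, use the comb embedding $g\colon\omega^{<\omega}\to 2^{<\omega}$, $g(\langle i_1,\dots,i_n\rangle)=0^{i_1}1\,0^{i_2}1\cdots 0^{i_n}1$ (where $0^j$ is a block of $j$ zeros). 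This $g$ preserves and reflects $\subseteq$ and incomparability: successive extension is clear, and if $\rho,\rho'$ first differ in coordinate $m$, with values $i<i'$ there, then $g(\rho)$ and $g(\rho')$ agree on an initial segment followed by $0^i$ and then disagree (a $1$ in $g(\rho)$ against a $0$ in $g(\rho')$). Set $b_\rho:=a_{g(\rho)}$. Branches of $\omega^{<\omega}$ map into branches of $2^{<\omega}$, so the path types of $\{\vp(x;b_\rho):\rho\in\omega^{<\omega}\}$ are consistent; incomparable nodes of $\omega^{<\omega}$ (in particular $^*$incomparable pairs and immediate successors of a common node) map to incomparable nodes of $2^{<\omega}$, so both the $2$-inconsistency clause and $(3)_1$ hold. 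Hence $\vp$ has $SOP_2$.

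I do not expect a serious obstacle. The one point to get right is that both embeddings must preserve \emph{and reflect} incomparability, not merely comparability; this is trivial for $f$ and is the whole point of the comb shape of $g$. The second point is that the $(\omega,2)$-tree's biconditional ``$P_2(a_\eta,a_\nu)\iff\eta,\nu$ comparable'' requires \emph{both} halves of the $SOP_2$ package --- $(3)_1$ for the $^*$incomparable pairs and the $2$-inconsistency of immediate successors for the remaining incomparable pairs --- a small distinction that is easy to overlook.
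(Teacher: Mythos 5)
Your proof is correct. The $(2)\Rightarrow(1)$ direction is what the paper calls a ``direct translation,'' and your biconditional $P_2(a_\eta,a_\nu)\iff$ comparable---assembled from both $(3)_1$ and the $2$-inconsistency of siblings---is exactly the point the paper leaves implicit; the inclusion $2^{<\omega}\hookrightarrow\omega^{<\omega}$ is trivial but it is correct to notice one must restrict. Where you genuinely diverge from the paper is in $(1)\Rightarrow(2)$: the paper says ``it suffices to show that $\langle P_n\rangle$ has an $(\omega,\omega)$-tree, which is true by compactness, using the strictness of the tree,'' leaving the blowing-up of binary to $\omega$-branching to a compactness argument. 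You instead give an explicit, compactness-free construction via the comb embedding $g(\langle i_1,\dots,i_n\rangle)=0^{i_1}1\cdots 0^{i_n}1$, verifying that $g$ preserves and reflects comparability and so transports both the branch-consistency and the strict incompatibility of incomparable nodes. This is essentially the combinatorial content that the paper's compactness appeal is implicitly invoking (one must, in a finite piece, embed an arbitrary finite subtree of $\omega^{<\omega}$ into $2^{<\omega}$ while controlling incomparability), so your version is cleaner and more self-contained; the paper's version is shorter but puts the burden on the reader. Your remark at the start about whether one works with $\vp$ or with $\theta(x;y,z)=\vp(x;y)\land\neg\vp(x;z)$ is also well taken and matches the paper's standing convention, so it does not affect the argument.
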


\begin{proof}
(2) $\rightarrow$ (1) This is a direct translation of Definition \ref{TP1-TP2}. 

(1) $\rightarrow$ (2) It suffices to show that $\langle P_n \rangle$ has an $(\omega, \omega)$-tree, 
which is true by compactness, using the strictness of the tree. 
\end{proof}

In the next definition, the power of the classification-theoretic order property on the $P_n$ 
is magnified when it can be taken to describe the interaction between complete graphs, i.e. base sets for partial $\vp$-types.
Compare Remark \ref{rmk-coding-complexity}.

\begin{defn} \label{c-op}
$P_\infty$ has the \emph{compatible order property} if there exists a sequence 
$C =\langle a_i, b_i : i<\omega \rangle \subset P_1$ such that for any $n<\omega$ and any $a_1, b_1,\dots a_n, b_n \subset C$, 
\[ P_n((a_1, b_1),\dots (a_n, b_n)) \iff  \left( \operatorname{max} \{ a_1, \dots a_n \} < \operatorname{min} \{ b_1, \dots b_n \} \right)\] 
\noindent Say that \emph{$P_m$} has the compatible order property to indicate that this holds for all $P_n$, $n\leq m$.
\end{defn}

\begin{obs} \label{o-tree}
Suppose $(T,\vp) \mapsto \langle P_n \rangle$, and that $\langle P_n \rangle$ has the
compatible order property. Then $\vp_2$ has the tree property, and in particular, $SOP_2$. 
\end{obs}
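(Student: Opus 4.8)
The plan is to unpack the compatible order property into a $\vp_2$-tree of the shape required by Definition \ref{TP1-TP2}. Given the sequence $C = \langle a_i, b_i : i < \omega \rangle \subset P_1$ witnessing the compatible order property, recall that each element $(a_i, b_i)$ of $P_1$ is itself a parameter for $\vp_2(x; y_1, y_2) = \vp(x; y_1) \land \vp(x; y_2)$. I would first reindex $C$ along the tree $\omega^{<\omega}$ in an order-respecting way: since the linear order underlying the compatible order property is dense (or at least can be taken so by compactness, as in the passage from $(\omega,\omega)$-trees to $(\omega,2)$-trees), I can choose, for each node $\eta \in \omega^{<\omega}$, a pair $(a_\eta, b_\eta)$ from a saturated extension of $C$ such that $\eta \subseteq \nu$ implies $a_\eta < a_\nu$ and $b_\nu < b_\eta$ — that is, comparable nodes give nested intervals — while for $*$incomparable $\eta, \nu$ the intervals $(a_\eta, b_\eta)$ and $(a_\nu, b_\nu)$ are disjoint, and moreover for immediate successors $\rho^\smallfrown i, \rho^\smallfrown j$ of a common node the relevant maxes and mins are interleaved so that $\max\{a_{\rho^\smallfrown i}, a_{\rho^\smallfrown j}\} \not< \min\{b_{\rho^\smallfrown i}, b_{\rho^\smallfrown j}\}$.

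With this indexing in place, the defining biconditional of the compatible order property does the rest. For a chain $\eta_1 \subseteq \dots \subseteq \eta_n$, the nested-interval condition gives $\max\{a_{\eta_1},\dots,a_{\eta_n}\} = a_{\eta_n} < b_{\eta_n} = \min\{b_{\eta_1},\dots,b_{\eta_n}\}$, hence $P_n((a_{\eta_1},b_{\eta_1}),\dots,(a_{\eta_n},b_{\eta_n}))$ holds; translating through the definition of $P_n$ for the formula $\vp_2$, this says exactly that $\{\vp_2(x;(a_{\eta_i},b_{\eta_i})) : i \le n\}$ is consistent, so paths are consistent. For immediate successors $\rho^\smallfrown i$ of a fixed $\rho$, the interleaving arranged above forces $\neg P_2((a_{\rho^\smallfrown i}, b_{\rho^\smallfrown i}), (a_{\rho^\smallfrown j}, b_{\rho^\smallfrown j}))$ for $i \ne j$, which is precisely $2$-inconsistency of $\{\vp_2(x; (a_{\rho^\smallfrown i}, b_{\rho^\smallfrown i})) : i < \omega\}$. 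So $\{\vp_2(x; (a_\eta, b_\eta)) : \eta \in \omega^{<\omega}\}$ is a $\vp_2$-$2$-tree, i.e.\ $\vp_2$ has the $2$-tree property; and since by construction $*$incomparable nodes give disjoint intervals and hence $2$-inconsistent pairs, condition $(3)_1$ holds, so in fact $\vp_2$ has $SOP_2$.

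The step I expect to be the main obstacle is arranging the tree-indexing so that all three local conditions hold simultaneously — nesting along chains, disjointness for $*$incomparable pairs, and interleaving for sibling successors. This is the point where one must either invoke density of the underlying order and pick rationals-with-endpoints along the tree, or appeal to compactness to stretch the $\omega$-indexed sequence into something indexed by a sufficiently branching dense order; the bookkeeping is the kind of thing done in the proof that an $(\omega,2)$-tree yields an $(\omega,\omega)$-tree. Everything after that is a mechanical translation between the $P_n$ predicates of the characteristic sequence of $\vp_2$ and consistency statements about finite conjunctions of instances of $\vp_2$, using Observation \ref{bp}(5) together with the definition of $P_n$. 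Once $\vp_2$ has $SOP_2$ it has the tree property a fortiori, giving both conclusions of the observation.
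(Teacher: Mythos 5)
Your proposal is correct and takes essentially the same approach as the paper: both reindex the witness sequence for the compatible order property along $\omega^{<\omega}$ by nested intervals (using a $\mathbb{Q}$-indexed or density-stretched sequence), then read off path-consistency, sibling-inconsistency, and $*$incomparable-inconsistency directly from the max/min biconditional. The only difference is that the paper spells out the recursive interval-splitting construction (setting $c_{\eta^\smallfrown \ell} = (a_{k_\ell}, b_{k_{\ell+1}})$ for $\langle k_\ell \rangle$ a subsequence of the open interval assigned to $\eta$), which is exactly the bookkeeping step you flagged as the main obstacle.
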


\begin{proof}
Let us build an $SOP_2$-tree $\{ \vp_2(x;a_\eta, b_\eta) : \eta \in \omega^{<\omega} \}$ following Definition \ref{TP1-TP2} above by specifying
the corresponding tree of parameters $\{ c_\eta : \eta \in \omega^{<\omega} \} \subset P_1$, where each $c_\eta$ is a pair $(a_\eta, b_\eta)$. 
Let $S = \langle a_i b_i : i< \mathbb{Q} \rangle$ be an indiscernible sequence witnessing the compatible order property. 
We will use two facts in our construction:

\begin{enumerate}
\item Let $\langle a_{i_\ell} b_{j_\ell} : \ell <\omega \rangle$ be any subsequence of $S$ such that 
$\ell < k \implies a_{i_\ell} < b_{j_\ell} < a_{i_k} < b_{j_k}$. 
Then $\{ \vp_2(x;a_{i_j},b_{i_j}) : j <\omega \}$ 2-divides by Observation \ref{op}. 
\item Let $a_{i_1},b_{j_1},\dots a_{i_n},b_{j_n} \in S$. Then
\[ P_n((a_{i_1},b_{j_1}),\dots (a_{i_n},b_{j_n})) \iff ~ \operatorname{max}\{ i_1,\dots i_n \} < \operatorname{min} \{j_1,\dots j_n\} \]
so in particular
\[ P_2((a_{i_1},b_{j_1}), (a_{i_2},b_{j_2})) \iff ~ \operatorname{max}\{ i_1, i_2 \} < \operatorname{min} \{j_1, j_2\} \]
\end{enumerate}

Let $\eta \in \omega^{<\omega}$ be given and suppose that either $c_\eta$ has been defined or $\eta = \emptyset$. If $c_\eta$ has been defined,
it will be $(a_i, b_j)$ for some $i<j \in \mathbb{Q}$. Let $\langle k_\ell : \ell < \omega \rangle$ be any $\omega$-indexed subset of
$(i,j) \cap \mathbb{Q}$, or of $\mathbb{Q}$ if $\eta = \emptyset$. Define $c_{\eta^\smallfrown \ell} = (a_{k_\ell}, b_{k_{\ell+1}})$.
Now suppose we have defined the full tree of parameters $c_\eta$ in this way. By fact $(1)$ we see that immediate successors of the same node
are $P_2$-inconsistent. By fact $(2)_n$, paths are consistent, while by fact $(2)_2$, any two $^*$incomparable (Definition \ref{TP1-TP2})
elements $c_\nu, c_\eta$ are $P_2$-inconsistent. 
\end{proof}

\begin{rmk}
The compatible order property in $P_\infty$ is in fact enough to imply maximality in the Keisler order \cite{mm-thesis}.
\end{rmk}

\section{Localization and persistence}
\label{section:l-p}

The goal of these methods is to analyze $\vp$-types, and thus to
concentrate on the combinatorial structure which is ``close to'' or ``inseparable from''
the complete graph $A$ representing a consistent partial $\vp$-type under analysis. Localization and
persistence, defined in this section, are tools for honing in on this essential structure. 

$P_n$ asks about incidence relations on a set of parameters; it will be useful to definably restrict the witness and parameter
sets. For instance:

\begin{itemize}
\item we may ask that the witnesses lie inside certain instances of $\vp$, e.g. by setting 
$P^\prime_1(y) = \exists x(\vp(x;y) \land \vp(x;a))$, i.e. $P^\prime_1 = P_2(y,a)$.
\item we may ask that the parameters be consistent 1-point extensions
(in the sense of some $P_n$) of certain finite graphs $C$. 
For instance, we might define $P^{\prime\prime}_1(y) = P_1(y) \land P_2(y,c_1) \land P_3(y, c_2, c_2)$. 
\end{itemize}

The next definition gives the general form. 

\begin{defn} \emph{(Localization)} \label{localization}
Fix a characteristic sequence $(T, \vp) \rightarrow \langle P_n \rangle$, and choose $B, A \subset M \models T$ with $A$ 
a positive base set and $A=\emptyset$ possible.
\begin{enumerate}
\item \emph{(the localized predicate $P^f_n$)}
A \emph{localization} $P^f_n$ of the predicate $P_n(y_1,\dots y_n)$ around the positive base set $A$ with parameters from $B$ is given by
a finite sequence of triples $f: m \rightarrow \omega \times \fss(y_1,\dots y_n) \times \fss(B)$
where $m<\omega$ and:

\begin{itemize}
\item writing $f(i) = (r_i, \sigma_i, \beta_i)$ and $\check{s}$ for the elements of the set $s$, we have:
\[  P^f_n(y_1,\dots y_n) :=  \bigwedge_{i\leq m} ~~ P_{r_i}(\check{\sigma_i}, \check{\beta_i})       \]
\item for each $\ell < \omega$, $T_1$ implies that there exists a $P_\ell$-complete graph $C_\ell$ such that 
$P^f_n$ holds on all $n$-tuples from $C_\ell$. 
If this last condition does not hold, $P^f_n$ is a \emph{trivial localization}. By \emph{localization} we will always mean
non-trivial localization.
\item In any model of $T_1$ containing $A$ and $B$, $P^f_n$ holds on all $n$-tuples from $A$. 
\end{itemize}

\br
\noindent Write $\loc^B_n(A)$ for the set of localizations of $P_n$ around $A$ with parameters from $B$ (i.e. nontrivial localizations, 
even when $A = \emptyset$). 

\br
\item \emph{(the localized formula $\vp^f$)}
For each localization $P^f_n$ of some predicate $P_n$ in the characteristic sequence of $\vp$, define the corresponding formula
\[\vp^f_n(x;y_1,\dots y_n) := \vp_n(x;y_1,\dots y_n) \land P^f_n(y_1,\dots y_n) \]
When $n=1$, write $\vp^f = \vp^f_1$. 
Let $S^f_\vp(N)$ denote the set of types $p \in S_\vp(N)$ such that for all
$\{ \vp_{i_1}(x;c_{i_1}), \dots \vp_{i_n}(x;c_{i_n}) \} \subset p$, $P^f_n(c_{i_1},\dots c_{i_n})$.
Then there is a natural correspondence between the sets of types 
\[ S^f_\vp(N) \leftrightarrow S_{\vp^f}(N)   \]

\br
\item \emph{(the $^*$localized formula $\vp^{f+\overline{a}}$)}
We have thus far described localizations of the parameters of $\vp$. We will also want to consider restrictions of the possible witnesses
to $\vp$ by adjoining instances of $\vp_k$. That is, set 
\[ \vp^{f+\overline{a}}(x;y) = \vp^{f+a_1,\dots a_k}(x;y) :=  \vp(x;y) \land P^f_1(y) \land \vp_{k}(x;a_1,\dots a_k) \]
where, as indicated, $k = \ell(\overline{a})$. The $^*$ is to emphasize that this is really the construction from $\vp$ of a new, 
though related, formula, which will have its own characteristic sequence, given by: 

\br
\item \emph{(the $^*$localized characteristic sequence $\langle P^{f+\overline{a}}_n : n<\omega \rangle$)}
The sequence $\langle P^{f+\overline{a}}_n : n<\omega \rangle$ associated to the formula $\vp^{f+\overline{a}}$ is given by, for each $n<\omega$,
\[  P^{f+\overline{a}}_n(y_1,\dots y_n) = \bigwedge_{i\leq n} P^f_1(y_i) \land P_{n+k}(y_1,\dots y_n, a_1,\dots a_k)\]
When $f$ or $\overline{a}$ are empty, we will omit them. 
\end{enumerate}
\end{defn}

\begin{rmk} \label{monster-ok}
Convention \ref{localization-depends-on-T} applies: that is, localization is not essentially dependent on the choice of model $M$. See Definition \ref{persistence} (Persistence) and the observation following. 
\end{rmk}

As a first example of the utility of localization, notice that when $\vp$ is simple we can localize to avoid infinite empty graphs. 

\begin{obs} \label{dividing}
Fix a positive base set $A$ for the formula $\psi$, possibly empty. 
When $\psi$ does not have the tree property, then for each $k<\omega$ there is a finite set $C$ over which $\psi$ is not $k$-dividable. 
As a consequence, if $\psi$ does not have the tree property, then for each predicate $P_n$ there is a localization around $A$ 
on which there is a uniform finite bound on the size of a $P_n$-empty graph. 
We can clearly also choose the localizations so that none of $\psi_1,\dots \psi_\ell$ are $k$-dividable 
for any finite $k,\ell$ fixed in advance. 
\end{obs}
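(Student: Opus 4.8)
The statement has three layers: (S1) the combinatorial core — failure of the tree property for $\psi$ forces, over the type coded by $A$, a finite set of parameters past which $\psi$ can no longer $k$-divide; (S2) the packaging of (S1) as a localization around $A$; and (S3) the parenthetical strengthening to the finitely many conjunctions $\psi_1,\dots,\psi_\ell$. My plan is to prove (S1) by a direct tree-construction, derive (S2) by turning the finite parameter set into a localization and invoking compactness for the bound, and obtain (S3) by iterating. Throughout write $p_A:=\{\psi(x;a):a\in A\}$ for the consistent partial type coded by the positive base set $A$, and for a finite set $C$ of parameters write $p_C$ similarly; call $b$ \emph{compatible with $A\cup C$} if $p_A\cup p_C\cup\{\psi(x;b)\}$ is consistent, equivalently (by compactness) if $ACb$ is a $P_\infty$-complete graph, i.e.\ $b$ is a one-point extension of $A\cup C$.

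\textbf{Proof of (S1).} Fix $k$; we may assume $k\geq 2$, since no subset of $P_1$ is $1$-inconsistent (every element of $P_1$ being consistent), so $\psi$ is never $1$-dividable and $C=\emptyset$ works. Suppose for contradiction that for every finite $C$ with $A\cup C$ consistent, $\psi$ is $k$-dividable over $A\cup C$, i.e.\ there is an infinite $k$-inconsistent family of parameters each compatible with $A\cup C$. Build $\{a_\eta:\eta\in\omega^{<\omega}\}\subseteq P_1$ recursively so that every initial segment of every branch is compatible with $A$: at a node $\eta$ whose branch-so-far forms the finite set $C_\eta$ (with $A\cup C_\eta$ consistent, maintained inductively), apply the hypothesis to $C_\eta$ to get an infinite $k$-inconsistent family, and take its members as the $a_{\eta^\smallfrown i}$. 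At every finite stage the partial type along any branch, together with $p_A$, is consistent, so by compactness the full branch type is consistent; and immediate successors of a node are $k$-inconsistent by construction. Forgetting $A$, the $a_\eta$ form a $\psi$-$k$-tree in the sense of Definition~\ref{TP1-TP2}, so $\psi$ has the $k$-tree property — contradicting the hypothesis. Hence some finite $C$ with $A\cup C$ consistent witnesses that $\psi$ is not $k$-dividable over $A\cup C$ (when $A=\emptyset$ this is literally the stated form).

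\textbf{Proof of (S2).} Fix $n$ and apply (S1) with $k=n$ to get a finite $C_n$, which we may take inside the parameter set $B$, such that $\psi$ is not $n$-dividable over $A\cup C_n$; by compactness there is then a uniform $N_n<\omega$ bounding the size of any $n$-inconsistent family of parameters compatible with $A\cup C_n$. Set
\[ P^f_1(y)\;:=\;P_1(y)\wedge\bigwedge_{\sigma\subseteq C_n}P_{|\sigma|+1}(y,\check{\sigma}),\qquad P^f_m(y_1,\dots,y_m)\;:=\;P_m(y_1,\dots,y_m)\wedge\bigwedge_{j\leq m}P^f_1(y_j), \]
a finite conjunction of instances of the $P_r$ with parameters from $B$. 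This is a localization around $A$ in the sense of Definition~\ref{localization}: it holds on all tuples from $A$ since $A\cup C_n$ is consistent, hence $P_\infty$-complete; and it is non-trivial, as the $P_\infty$-complete graph $A\cup C_n$ itself satisfies it at every finite $\ell$. Now any $P_n$-empty graph inside this localization — an $n$-inconsistent subset of $P^f_1$ consisting of one-point extensions of $A$ — is an $n$-inconsistent family of parameters compatible with $A\cup C_n$, so has size $\leq N_n$, the asserted uniform bound. For (S3) one runs the same construction for each of the finitely many formulas $\psi_1,\dots,\psi_\ell$, none of which has the tree property, and intersects the finitely many resulting localizations; equivalently, using Observation~\ref{bp}(4) one bounds each of the finitely many $k$-dividing configurations in turn.

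\textbf{Main obstacle.} The tree-construction itself is routine; the delicate point is the bookkeeping between the (possibly infinite) base set $A$ and the finitely many parameters a localization is allowed to mention. One must produce a finite $C$ that is simultaneously compatible with $A$ and blocks further $k$-dividing over $A\cup C$, and then read the conclusion correctly: ``$P_n$-empty graph on the localization around $A$'' must be understood as ranging over one-point extensions of $A$, which is exactly the class the finite localization $P^f_n$ controls. Checking that this is the intended reading (and that the non-triviality clause of Definition~\ref{localization} is met with $A\cup C_n$ as witness) is where care is needed.
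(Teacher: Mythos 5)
Your tree construction in (S1) is the standard argument that $D(x=x,\psi,k)<\omega$ for a formula without the tree property, which is precisely what the paper's one-line proof points to (it cites Wagner, and defers a ``more direct argument'' to the proof of Conclusion~\ref{ps-stable}). So the approach is the right one and you supply more detail than the paper does. There is, however, a genuine gap in the passage (S1)$\to$(S2).

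What (S1) gives you is a finite $C_n$ compatible with $A$ such that there is no infinite $n$-inconsistent family all of whose members are one-point extensions of $A\cup C_n$; compactness then gives the bound $N_n$ on such families. But the localization you write down,
\[ P^f_1(y):=P_1(y)\wedge\bigwedge_{\sigma\subseteq C_n}P_{|\sigma|+1}(y,\check{\sigma}), \]
only constrains its elements to be consistent one-point extensions of $C_n$. When $A$ is infinite, $P^f_1$ contains many elements that are \emph{not} one-point extensions of $A$, and the bound $N_n$ says nothing about a $P_n$-empty graph assembled from those. You flag this worry yourself in the ``Main obstacle'' paragraph and propose to resolve it by fiat, reading ``$P_n$-empty graph on the localization'' as ranging only over one-point extensions of $A$. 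That reading does not match the way the observation is used downstream: in Lemma~\ref{springboard-2} the localization is chosen ``so that neither $\vp$ nor $\vp_2$ are dividable using parameters from $P^f_1$,'' so the bound must apply to arbitrary elements of the definable set $P^f_1$.

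The repair is to run the compactness step as an \emph{extraction} rather than merely a count, which is exactly the move made in the proof of Conclusion~\ref{ps-stable} (``the finite fragment of (1) which prevents (2) gives the desired localization''). Your (S1) shows that the schema asserting ``there exist $y_1,y_2,\dots$, each a one-point extension of $A\cup C_n$, with $\neg P_n$ on every $n$-subset'' is inconsistent with $T_1$. By compactness some finite fragment is already inconsistent; it mentions only $y_1,\dots,y_r$ for some finite $r$, finitely many parameters $\bar a\subset A$ together with $C_n$, and finitely many of the $\neg P_n$ conditions. Take $B:=\bar a\cup C_n$ and set
\[ P^{f}_1(y):=P_1(y)\wedge P_{1+|\bar a\cup C_n|}\!\left(y,\check{\bar a},\check{C_n}\right). \]
This localization still contains $A$, and any $r$ elements of $P^{f}_1$ satisfy the extracted finite fragment of the one-point-extension conditions, hence by the inconsistency cannot form a $P_n$-empty graph. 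So every $P_n$-empty graph inside $P^{f}_1$ has size $<r$: a uniform bound on the whole definable set, as required. With this fix in place the iteration in (S3) is unproblematic.
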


\begin{proof}
This is the proof that $D(x=x,\psi,k)<\omega$ for any simple formula $\psi$; see for instance \cite{wagner}.
(For a more direct argument, see the proof of Conclusion \ref{ps-stable} below.)
\end{proof}

The following important property of formulas was isolated by Buechler \cite{buechler}.

\begin{defn} \label{defn-low}
The formula $\vp$ is \emph{low} if there exists $k<\omega$ such that for every instance $\vp(x;a)$ of $\vp$,
$\vp(x;a)$ divides iff it $\leq k$-divides. 
\end{defn}

\begin{obs} \label{stable-low}
If $\vp$ does not have the independence property then $\vp$ is low. 
\end{obs}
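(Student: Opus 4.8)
The plan is to prove the contrapositive: if $\vp$ is not low, then $\vp$ has the independence property. So suppose $\vp$ is not low. Then for every $k < \omega$ there exists an instance $\vp(x;a_k)$ which divides but does not $k$-divide; since it divides, there is some $n_k$ so that it $n_k$-divides, and by assumption $n_k > k$. After passing to an infinite indiscernible sequence witnessing the dividing (as in the proof of Observation \ref{bp}(4)), we may choose for each $k$ a sequence $\langle c^k_i : i < \omega \rangle$, indiscernible, with $\{\vp(x;c^k_i) : i<\omega\}$ being $n_k$-inconsistent but $n_k - 1$-consistent (indeed $k$-consistent). The key point is that the ``width'' of consistency along these sequences is unbounded as $k \to \infty$, even though each sequence is eventually inconsistent.

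The main step is to manufacture the independence property out of this unbounded-width behavior. I would argue as follows. Fix $k$ and the indiscernible sequence $\langle c^k_i : i < \omega \rangle$ as above. For any subset $\sigma \subseteq \{0,\dots,k-1\}$ with $|\sigma| \leq k$, $k$-consistency gives $\models \exists x \bigwedge_{i \in \sigma} \vp(x;c^k_i)$; but because the full sequence of length $> k$ is inconsistent, one cannot find $x$ satisfying \emph{all} of $\vp(x;c^k_0),\dots,\vp(x;c^k_{n_k-1})$. The obstacle is that this on its own only gives a form of $k$-consistency, not genuine independence — we need that for a single fixed formula, arbitrarily large finite sets can be split in \emph{all} possible ways. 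To get this I would invoke the characterization of IP via counting types over finite sets (\cite{Sh:c} Theorem II.4.11, already cited in the proof of Claim \ref{ip-in-cs}): if $\vp$ had NIP, then $|S_\vp(A)|$ is bounded polynomially in $|A|$ for finite $A$, which in turn (by a standard Sauer–Shelah / VC-dimension argument) forces a uniform bound on how ``spread out'' consistency can be along any sequence of instances — precisely, it bounds the index of dividing uniformly, contradicting non-lowness. Concretely: NIP implies finite VC-dimension, say $d$; then on any indiscernible sequence $\langle c_i \rangle$, the trace of $\{\vp(x;c_i)\}$ on any finite set is a set system of VC-dimension $\leq d$, and by indiscernibility either every initial segment is consistent (so $\vp(x;c_0)$ does not divide via this sequence) or there is a uniform $n = n(d)$ with $n$-inconsistency. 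Taking the sup over all dividing instances gives lowness with constant depending only on $d$.

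I expect the main obstacle to be packaging the second half cleanly: one wants to go from ``NIP, hence finite VC-dimension'' to ``uniform bound on the arity of dividing'' without reproving the VC machinery. The cleanest route is probably to quote the known fact (e.g.\ from the theory of NIP, cf.\ \cite{Sh:c} or standard references) that in an NIP theory every formula is low — which is essentially Observation \ref{stable-low} itself but is folklore — or to give the short indiscernible-sequence argument: if $\langle c_i : i < \omega \rangle$ is indiscernible and $\{\vp(x;c_i)\}$ is inconsistent, then by NIP applied to the formula $\vp(x;y)$ and the sequence, there is a bound (depending only on $\vp$, via the alternation number / VC-dimension) on the size of a consistent subset, hence on the arity of inconsistency. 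I would present this indiscernible-sequence version as the core computation, cite \cite{Sh:c}.II for the counting characterization of IP to justify finiteness of the relevant combinatorial invariant, and conclude. The whole argument is short given the cited results; the only real care needed is in the reduction to indiscernible sequences (routine, by compactness and Ramsey as in Observation \ref{bp}(4)) and in stating the uniform bound in a way that matches Definition \ref{defn-low}.
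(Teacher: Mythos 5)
Your proposal takes essentially the same route as the paper — prove the contrapositive, pass to indiscernible sequences witnessing unbounded arity of dividing, and conclude IP via the type-counting characterization — but you leave a gap precisely at the step you correctly flag as "the obstacle." You observe that $k$-consistency only gives witnesses for the positive part $\bigwedge_{i\in\sigma}\vp(x;c_i)$, not a witness realizing $\sigma$ \emph{exactly}, and then you reach for Sauer--Shelah / VC machinery as a black box rather than closing the gap directly. The paper closes it with a one-line trick that your write-up never states: if the indiscernible sequence is $k$-consistent but $(k+1)$-inconsistent and you take $2k$ of its terms, then for any $\sigma$ of size exactly $k$ a witness $x$ to $\bigwedge_{i\in\sigma}\vp(x;c_i)$ automatically satisfies $\neg\vp(x;c_j)$ for every $j\notin\sigma$ — otherwise $x$ would satisfy $k+1$ instances, contradicting $(k+1)$-inconsistency. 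So each $k$-subset $\sigma$ of the $2k$ parameters is realized \emph{exactly}; this is the paper's schema $\Psi_k$, and it immediately yields $\binom{2k}{k}$ distinct $\vp$-types over a set of size $2k$, which is superpolynomial and so contradicts NIP by the cited type-counting characterization (\cite{Sh:c}~II.4.11).

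Without this observation, your argument as written is not self-contained: the move from "polynomial type count" to "uniform bound on the arity of dividing along indiscernible sequences" is precisely the content of the lemma you are trying to prove, and appealing to "the known fact that in an NIP theory every formula is low" is circular (you note this yourself). The VC-dimension route can certainly be made to work, but it also goes through the same counting of middle-binomial subsets — the work does not disappear, it just gets hidden in the citation. Insert the exactly-$k$ observation and the rest of your sketch (reduction to indiscernible sequences by compactness and Ramsey, appeal to II.4.11) goes through cleanly and matches the paper.
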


\begin{proof}
To show that any non-low formula $\vp$ has the independence property, it suffices to establish the consistency of the following schema.
For $k<\omega$, $\Psi_k$ says that there there exist $y_1,\dots y_{2k}$ such that for every $\sigma \subset 2k$, $|\sigma| = k$, 
\[ \exists x \left( \vrt \vp(x;y_i) \iff i \in \sigma \right) \]
But $\Psi_k$ will be true on any subset of size $2k$ of an indiscernible sequence on which $\vp$ is $k$-consistent but
$(k+1)$-inconsistent, and such sequences exist for arbitrarily large $k$ by hypothesis of non-lowness. 
\end{proof}

\begin{cor} \label{simple-and-low}
When the formula $\psi$ of Observation \ref{dividing} is simple \emph{and} low, 
we can find a localization in which $\psi$ is not $k$-dividable, for any $k$. 
\end{cor}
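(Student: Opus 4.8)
The plan is to combine the two hypotheses on $\psi$ — simplicity (no tree property) and lowness — with the localization technology of Observation \ref{dividing}. Recall that Observation \ref{dividing} already gives, for each fixed $k$ and each predicate $P_n$, a localization around the positive base set $A$ on which no $P_n$-empty graph exceeds a fixed finite size; equivalently, a localization on which $\psi$ is not $k$-dividable. The defect of that statement is that the localization depends on $k$: as $k$ grows, we may need to intersect with more and more instances of the $P_m$. The point of lowness is precisely to remove this dependence.

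First I would recall the content of lowness (Definition \ref{defn-low}): there is a single $k_0 < \omega$ such that \emph{every} instance $\psi(x;a)$ divides iff it $k_0$-divides. Combined with compactness and the definition of dividable (Definition \ref{notation}(\ref{div})), this upgrades to: $\psi$ is dividable iff $\psi$ is $k_0$-dividable, i.e. there is no infinite $1$-consistent $k$-inconsistent family for \emph{any} $k$ once there is none for $k = k_0$. So it suffices to produce a single localization around $A$ on which $\psi$ is not $k_0$-dividable; by lowness that same localization automatically witnesses non-$k$-dividability for all $k$. Here I should be slightly careful: lowness is a property of the original formula $\psi$, and after localizing we are working with $\psi^f$, whose instances are conjunctions $\psi(x;y) \wedge P^f_1(y)$ — but an instance of $\psi^f$ dividing is a fortiori an instance-pattern contained in one for $\psi$ (the parameter set has only shrunk), so non-$k_0$-dividability of $\psi$ on the localized parameter space gives non-$k$-dividability of $\psi^f$ for all $k$ as well. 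Alternatively, and more cleanly, one argues entirely at the level of the characteristic sequence $\langle P_n \rangle$: lowness says that if some infinite $C \subset P_1$ has $C^k \cap P_{nk} = \emptyset$ for some $k$, then already some infinite $C' \subset P_1$ has $(C')^{k_0} \cap P_{n k_0} = \emptyset$, via Observation \ref{bp}(4) and Observation \ref{stable-low}'s style of indiscernible-sequence argument.

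The main step is then just the application of Observation \ref{dividing} with the single value $k = k_0$: it yields a localization $P^f_n \in \loc^B_n(A)$, for each $n$, with a uniform finite bound on the size of a $P_n$-empty graph, hence on which $\psi$ (equivalently $\psi^f$) is not $k_0$-dividable. By the reduction of the previous paragraph, $\psi$ is then not $k$-dividable on this localization for any $k < \omega$, which is the assertion.

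The one place requiring genuine care — the main obstacle — is the interaction between lowness and localization: lowness is stated for instances of $\psi$, not for the derived formulas $\psi^f$ or for arbitrary conjunctions $\psi_n$, and after localization the relevant dividing families live among the localized parameters. The resolution, sketched above, is that passing to a localization only restricts the available parameter tuples, so any dividing configuration for $\psi^f$ lifts to one for $\psi$ with the same inconsistency arity; hence the single bound $k_0$ supplied by lowness of $\psi$ governs all of them. Once this is observed, the corollary is immediate from Observation \ref{dividing}.
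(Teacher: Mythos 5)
Your proposal is correct and is the natural argument; the paper states this as an immediate corollary of Observation \ref{dividing} without a separate proof, and your reasoning supplies exactly the intended content: lowness furnishes a single bound $k_0$, Observation \ref{dividing} is invoked once with that $k_0$, and lowness is used a second time to show that the resulting localization kills $k$-dividability for \emph{all} $k$, not just $k_0$.

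One small clarification is worth making in the last paragraph. The ``lifting'' observation (a dividing configuration in $P^f_1$ is a fortiori one for $\psi$ in $P_1$) is correct but does not by itself close the argument, since the $k_0$-inconsistent witness that lowness provides might a priori live outside the localization. The clean way to finish is: given any infinite $1$-consistent, $k$-inconsistent $C \subset P^f_1$, pass to an indiscernible sequence $C'$ with the same Ehrenfeucht--Mostowski type realized inside $C$; since $P^f_1$ is definable and the tuples of $C$ already satisfy it, $C'$ can be taken inside $P^f_1$. Now $\{\psi(x;c) : c \in C'\}$ is inconsistent and indiscernible, so by lowness it is already $k_0$-inconsistent, and this happens \emph{on the same sequence} $C' \subset P^f_1$. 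This is precisely where one needs to read Definition \ref{defn-low} in its Buechler form, i.e.\ as a uniform bound on the inconsistency arity of indiscernible sequences, which is the reading the paper itself adopts implicitly in the proof of Observation \ref{stable-low}. With that spelled out, your argument is complete.
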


\subsection{Stability in the parameter space}

The classification-theoretic complexity of the formulas $P_n$ is often strictly
less than that of the original theory $T$. Note that the results here refer to the \emph{formulas} $P_n(y_1,\dots y_n)$, not
necessarily to their full theory $T_1$.  

\begin{obs} \label{op}
Suppose $(T,\vp) \mapsto \langle P_n \rangle$. 
If $P_2(x;y)$ has the order property then $\vp(x;y) \land \vp(x;z)$ is 2-dividable. 
\end{obs}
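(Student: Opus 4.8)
The plan is to read the order property for $P_2$ directly as a half-graph configuration in the parameter space $P_1$ of $\vp$, and then observe that its diagonal already supplies a $1$-consistent, $2$-inconsistent family of instances of $\vp_2(x;y,z) := \vp(x;y)\wedge\vp(x;z)$.

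First I would unwind the hypothesis. Since $P_2(z_1,z_2)=\exists x(\vp(x;z_1)\wedge\vp(x;z_2))$ is symmetric in its two arguments (Observation \ref{bp}(2)), up to that symmetry the only non-degenerate way to split its variables into an object block and a parameter block is to take one argument as the object variable and the other as the parameter variable. Thus the order property supplies sequences $\langle a_i:i<\omega\rangle$ and $\langle b_i:i<\omega\rangle$ (in a monster model, per Convention \ref{localization-depends-on-T}) with, after possibly swapping the two sequences and reindexing by an index shift, $\models P_2(a_i,b_j)\iff i\leq j$. Note that each $a_i,b_i$ lies in $P_1$: this follows from Monotonicity (Observation \ref{bp}(3)) applied to the diagonal instances $P_2(a_i,b_i)$, which hold since $i\leq i$.

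Next I would exhibit the dividing witness for $\vp_2$. Put $c_i:=(a_i,b_i)$, a parameter for $\vp_2(x;y,z)=\vp(x;y)\wedge\vp(x;z)$, and consider $\Sigma:=\{\,\vp_2(x;a_i,b_i):i<\omega\,\}$. For $1$-consistency: $\vp_2(x;a_i,b_i)=\vp(x;a_i)\wedge\vp(x;b_i)$ is consistent because $\exists x(\vp(x;a_i)\wedge\vp(x;b_i))$ is precisely $P_2(a_i,b_i)$, which holds. For $2$-inconsistency: given $i\neq j$, say $i>j$, we have $i\not\leq j$, hence $\neg P_2(a_i,b_j)$, i.e. $\neg\exists x(\vp(x;a_i)\wedge\vp(x;b_j))$; since $\vp(x;a_i)$ and $\vp(x;b_j)$ are both conjuncts of $\vp_2(x;a_i,b_i)\wedge\vp_2(x;a_j,b_j)$, this two-element subset of $\Sigma$ is inconsistent. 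The $c_i$ are pairwise distinct (already the $a_i$ are, since $P_2(a_i,b_i)$ holds while $P_2(a_i,b_j)$ fails for $i>j$), so $C=\{c_i:i<\omega\}$ is infinite and $\Sigma$ witnesses that $\vp(x;y)\wedge\vp(x;z)$ is $2$-dividable.

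There is no real obstacle; the only point needing care is the bookkeeping in the second paragraph — confirming that the object/parameter split of $P_2$ is the expected one and normalizing the half-graph to the convenient form $P_2(a_i,b_j)\iff i\leq j$ — after which the key observation, that off-diagonal failure of $P_2$ forces pairwise inconsistency of the diagonal instances of $\vp_2$, is immediate. (If one insisted on coordinate-level partitions of the variables of $P_2$, one would first reassemble the two blocks into honest $\vp$-parameters and then run the same argument; but the convention in use, cf. Example \ref{expl-r-g}(3), is that the partition is at the level of the arguments $z_1,z_2$, so this refinement is unnecessary.)
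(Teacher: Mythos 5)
Your proof is correct and takes essentially the same approach as the paper: both read the order property for $P_2$ as a half-graph in the parameter space and extract a $1$-consistent, $2$-inconsistent family of instances of $\vp_2$ by selecting pairs along the "staircase." The only cosmetic difference is that the paper keeps the strict convention $P_2(a_i,b_j)\iff i<j$ and uses the offset pairs $(a_i,b_{i+1})$, whereas you normalize to $\leq$ and use the diagonal pairs $(a_i,b_i)$; these are the same argument up to a shift of index. Your extra bookkeeping (parameters lie in $P_1$, the $c_i$ are distinct so $C$ is genuinely infinite) is harmless and slightly more careful than the paper's terse statement.
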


\begin{proof}
Let $\langle a_i,b_i : i<\omega \rangle$ be a sequence witnessing the order property for $P_2$, so
$P_2(a_i, b_j)$ iff $i<j$.  
This means that $\exists x (\vp(x;a_i) \land \vp(x;b_j))$ iff $i<j$. 
So $\vp(x;a_i) \land \vp(x;b_{i+1})$ are consistent for each $i$, but the set
$\{ \vp(x;a_i) \land \vp(x;b_{i+1}) : i < \omega \}$ is 2-inconsistent. 
\end{proof}

\begin{rmk}
By compactness, without loss of generality the sequence of Observation \ref{op} can be chosen to be $(T-)$indiscernible, and so 
actually witnesses the dividing of some instance of $\vp_2$.

Note that the converse of Observation \ref{op} fails: for $\vp(x;y) \land \vp(x;z)$ to divide it is sufficient
to have a disjoint sequence of ``matchsticks'' in $P_2$ (i.e. $(a_i, b_i) : i<\omega$ such that $P_2(a_i,b_j)$ iff $i=j$), without
the additional consistency which the order property provides. 
\end{rmk}

Nonetheless, work relating the characteristic sequence to Szemer\'edi regularity illuminates the role of the order \cite{mm-thesis}. 

\begin{obs} \label{order-dividable}
Suppose that $(T,\vp) \mapsto \langle P_n \rangle$, and for some $n, k$ and some partition of
$y_1,\dots y_n$ into $k$ object and ($n$-$k$) parameter variables, 
$P_n(y_1,\dots y_k ; y_{k+1},\dots y_n)$ has the order property. 
Then $\vp_n(x ; y_1, \dots y_n)$ is 2-dividable. 
\end{obs}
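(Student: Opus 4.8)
This is the natural generalization of Observation \ref{op}, which handles the case $n=2$, $k=1$. The idea is the same: an order property configuration in $P_n$ gives a sequence of instances of $\vp_n$ that are consistent when we ``shift'' the object and parameter blocks past each other, but become inconsistent once we try to realize two of them with overlapping blocks.

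\emph{First}, I would unwind the hypothesis. Let $\langle \bar a_i, \bar b_i : i < \omega \rangle$ witness the order property for $P_n(y_1,\dots,y_k; y_{k+1},\dots,y_n)$, where $\bar a_i$ is a $k$-tuple filling the object variables and $\bar b_i$ an $(n-k)$-tuple filling the parameter variables, so that
\[ P_n(\bar a_i; \bar b_j) \iff i < j. \]
By the definition of $P_n$ this says $\exists x \bigwedge \vp(x; \cdot)$ over the concatenated tuple $\bar a_i {}^\smallfrown \bar b_j$ holds exactly when $i<j$. In particular, for each $i$ the formula $\vp_n(x; \bar a_i {}^\smallfrown \bar b_{i+1})$ is consistent (take $j = i+1 > i$), so the set $\{\vp_n(x;\bar a_i{}^\smallfrown \bar b_{i+1}) : i<\omega\}$ is $1$-consistent.

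\emph{Second}, I would show $2$-inconsistency of this set, i.e. that for $i < i'$ the pair $\vp_n(x;\bar a_i{}^\smallfrown\bar b_{i+1})$ and $\vp_n(x;\bar a_{i'}{}^\smallfrown\bar b_{i'+1})$ has no common realization. A common realization would be a single $x$ satisfying $\vp(x;c)$ for every coordinate $c$ appearing in either tuple; in particular it satisfies $\vp$ on all of $\bar a_{i'}$ and all of $\bar b_{i+1}$, hence (by $\exists x$) we would get $P_n(\bar a_{i'}; \bar b_{i+1})$. But $i+1 \le i' $, so $i' < i+1$ fails, contradicting the order property. Hence the set is $2$-inconsistent. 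Combining with $1$-consistency and the observation that this set lives inside an indiscernible sequence (pass to a $T$-indiscernible subsequence by compactness, as in the remark after Observation \ref{op}, which only strengthens the configuration), we conclude some instance of $\vp_n(x;y_1,\dots,y_n)$ $2$-divides — which is exactly $2$-dividability of $\vp_n$ in the sense of Definition \ref{notation}(\ref{div}), via Observation \ref{bp}(4) applied to the sequence of $n$-tuples $\langle \bar a_i{}^\smallfrown \bar b_{i+1} : i<\omega\rangle$.

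\emph{The main obstacle.} The argument is essentially bookkeeping, so the only real subtlety is making sure the object/parameter partition is used correctly: one must check that when two instances $\vp_n(x;\bar a_i{}^\smallfrown\bar b_{i+1})$ and $\vp_n(x;\bar a_{i'}{}^\smallfrown\bar b_{i'+1})$ share a realization, the resulting configuration we extract — $\vp$ holding on $\bar a_{i'}$ together with $\bar b_{i+1}$ — is genuinely an instance of $P_n$ with the correct tuple in the correct ($k$ vs.\ $n-k$) slots, so that the order property can be invoked with indices in the wrong order. A secondary point is the choice of which ``diagonal'' to take: one uses $\bar b_{i+1}$ (the next index) rather than $\bar b_i$ precisely so that $1$-consistency holds on the nose; this is the same trick as in Observation \ref{op} and I'd flag it explicitly. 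No step here looks genuinely hard.
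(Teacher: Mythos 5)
Your argument is correct and is exactly what the paper intends: the paper's proof is the one-line remark that one replaces $a_i$ by $k$-tuples and $b_j$ by $(n-k)$-tuples in the proof of Observation \ref{op}, which is precisely the unwinding you carried out with the diagonal $\bar a_i{}^\smallfrown \bar b_{i+1}$. Nothing to add.
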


\begin{proof}
The proof is analogous to that of Observation \ref{op}, replacing the $a_i$ by $k$-tuples and the $b_j$ by
($n$-$k$)-tuples. 
\end{proof}

Thus in cases where we can localize to avoid dividing of $\vp$, we can assume any initial segment of
the associated predicates $P_n$ are stable:

\begin{concl} \label{ps-stable}
For each formula $\vp$ and for all $m<\omega$, if $\vp_{2n}$ does not have the tree property,
then for each positive base set $A$ there are a finite $B$ and
$P^f_1 \in \loc^B_1(A)$ over which $P_2,\dots P_n$ do not have the order property. 
In particular, this holds if $T$ is simple.
\end{concl}

\begin{proof}
We proceed by asking:
do there exist elements $\langle y_i z_i : i<\omega \rangle$ such that (1) each $y_i z_i$ is a $2$-point extension of $A$ 
and (2) $\langle y_i z_i : i<\omega \rangle$ witnesses the order property for $P_2$? 
If not, localize using the finite set of conditions in (1) which prevent (2). Otherwise, let $a_1, b_1$ be the first pair in any
such sequence, set $A_1 := A\cup \{a_1, b_1 \}$ and repeat the argument using $A_1$ in place of $A$. By simplicity, there is a uniform
finite bound on the number of times $\vp_n$ (see Observation \ref{order-dividable}) can sequentially divide. Condition (1)
ensures that the dividing is sequential, corresponding to choosing progressive forking extensions of the partial type corresponding
to $A$. At some finite stage $t$ this will stop, meaning that (1) and (2) fail with $A_t$ in place of $A$; the finite fragment
of (1) which prevents (2) gives the desired localization.
\end{proof}

By way of motivating the next subsection, let us prove the contrapositive: If the order property in $P_2$ persists under repeated localization, 
then $\vp$ has the tree property. Compare the proof of Observation \ref{o-tree} above. Without the compatible order property, we cannot ensure the tree is strict. While that argument built a tree out of a set of parameters which were given all at once (a so-called ``static'' argument), 
the following ``dynamic'' argument must constantly localize to find subsequent parameters, so cannot ensure that elements in different
localizations are inconsistent.  

\begin{lemma}
Suppose that in every localization of $P_1$ (around $A = \emptyset$), $P_2$ has the order property. Then $\vp_2$ has the tree property. 
\end{lemma}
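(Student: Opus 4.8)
The plan is to build a $\vp_2$-tree $\{\vp_2(x;a_\eta,b_\eta) : \eta \in \omega^{<\omega}\}$ in the sense of Definition~\ref{TP1-TP2} (with $k=2$) by recursion on the tree $\omega^{<\omega}$, at each node using the hypothesis that the order property for $P_2$ survives \emph{every} localization. The key mechanism is this: having built a node $c_\eta = (a_\eta, b_\eta)$, we localize $P_1$ around the current partial type so that the witnesses are constrained to also satisfy the relevant instances of $\vp_2$ already committed to along the branch below $\eta$; by hypothesis $P_2$ still has the order property in this localization, and (as in Observation~\ref{op}) an order-indiscernible witnessing sequence $\langle a_i, b_i : i < \omega\rangle$ inside this localized $P_1$ gives us $\vp_2(x;a_i,b_{i+1})$ consistent for each $i$ but $\{\vp_2(x;a_i,b_{i+1}) : i<\omega\}$ $2$-inconsistent. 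We then set the immediate successors of $\eta$ to be $c_{\eta^\smallfrown i} := (a_i, b_{i+1})$ for $i < \omega$ (thinning the sequence if necessary so the indices don't collide, exactly as in the proof of Observation~\ref{o-tree}).

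First I would make precise what ``localize around the branch below $\eta$'' means: if $\eta = \rho_0 \subsetneq \rho_1 \subsetneq \cdots \subsetneq \rho_m = \eta$ is the branch from the root, the accumulated partial type is $p_\eta = \{\vp_2(x; a_{\rho_j}, b_{\rho_j}) : j \le m\}$, which is consistent by construction (the nodes along a branch are chosen to be mutually consistent). Working inside $P_1$, form the localization $P^f_1$ by conjoining the conditions $P_3(y_1, a_{\rho_j}, b_{\rho_j})$ for each $j \le m$ — this is a nontrivial localization precisely because $p_\eta$ is consistent (so by Observation~\ref{bp}(5) there is a $P_\infty$-complete graph containing the relevant parameters, giving the required witnessing complete graph $C_\ell$ for the localization to be non-trivial). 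By hypothesis $P_2$ has the order property on $P^f_1$, and an order-indiscernible witness stays inside this localization, so every pair $(a_i, b_j)$ chosen from it is automatically a consistent extension of $p_\eta$ and the new node $c_{\eta^\smallfrown i} = (a_i, b_{i+1})$ satisfies $\vp_2$-consistency of the extended branch. Then I verify the two tree-property clauses: (1) paths are consistent — immediate from the fact that each successor extends the accumulated partial type consistently, chained up the whole branch; (2) immediate successors $c_{\eta^\smallfrown i}, c_{\eta^\smallfrown i'}$ with $i \ne i'$ are $2$-inconsistent — this is exactly the $2$-inconsistency of $\{\vp_2(x; a_i, b_{i+1}) : i < \omega\}$ coming from the order property (the step witnessed in Observation~\ref{op}). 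Note we make no claim about $^*$incomparable nodes in different subtrees, which is why this only yields the tree property and not $SOP_2$ — precisely the point flagged in the paragraph preceding the lemma.

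The main obstacle, and the place the argument needs the most care, is the bookkeeping that keeps the recursion coherent: we need the parameters newly chosen at $\eta^\smallfrown i$ to genuinely be consistent one-point (or rather, the pair being a two-point) extension of the \emph{entire} branch below, not just of the single node $c_\eta$, and simultaneously we must ensure that parameters used at distinct nodes of the tree are distinct (the ``sparseness'' / renumbering issue handled loosely in Observation~\ref{o-tree}). The branch-consistency is secured by localizing with all of $p_\eta$, as above, rather than just the last node; the disjointness-of-parameters issue is handled by passing to sufficiently spread-out subsequences of each order-indiscernible witness so that no element is reused, which is harmless since any infinite subsequence of an order-indiscernible sequence still witnesses the order property. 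One should also check the localizations used at deeper nodes are themselves nontrivial — but this again reduces to consistency of the corresponding $p_\eta$, which holds inductively. With these points in place, compactness (or just the explicit $\omega^{<\omega}$-indexed construction) yields the $\vp_2$-tree, hence $\vp_2$ has the tree property.
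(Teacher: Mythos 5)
Your overall strategy is the same as the paper's: build the $\vp_2$-tree recursively on $\omega^{<\omega}$, localizing at each node around the branch-so-far, and use the hypothesis to find a fresh order-property witness inside the localization; the $2$-inconsistency of the ``matchsticks'' $(a_i,b_{i+1})$ gives the dividing at each node, exactly as in Observation~\ref{op}. However, there is a genuine gap in the localization you write down. You conjoin the conditions $P_3(y, a_{\rho_j}, b_{\rho_j})$ for each node $\rho_j$ along the branch. That is too weak: by monotonicity (Observation~\ref{bp}(3)) the $P_m$ only imply their restrictions to subtuples, never the other way around, so
\[
\bigwedge_{j \le m} P_3(y, a_{\rho_j}, b_{\rho_j}) \quad \not\Longrightarrow \quad P_{2m+1}\bigl(y, a_{\rho_0}, b_{\rho_0}, \dots, a_{\rho_m}, b_{\rho_m}\bigr)
\]
in general (the implication would follow from finite support, which is not assumed). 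Consequently your inference ``an order-indiscernible witness stays inside this localization, so every pair chosen from it is automatically a consistent extension of $p_\eta$'' does not hold: knowing a new element $y$ is $P_3$-compatible with each pair $(a_{\rho_j},b_{\rho_j})$ separately does not make $\{y\} \cup \{a_{\rho_j},b_{\rho_j} : j \le m\}$ a $P_\infty$-complete graph, which is what path-consistency requires. The fix is to localize with the single high-arity predicate over all accumulated branch parameters — i.e. define $P^{f_\eta}_1(y)$ to be $P_{2m+1}(y, a_{\rho_0}, b_{\rho_0}, \dots, a_{\rho_m}, b_{\rho_m})$ (the paper writes $P_{n+1}((y,z),\check{E_\eta})$, using the full branch $E_\eta$ in one predicate rather than a conjunction of short ones). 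With that replacement your nontriviality argument via Observation~\ref{bp}(5), the use of Fact~\ref{one-pt-extns}-style extensions, the spreading-out of the indiscernible witness, and your verification of both tree-property clauses all go through and coincide with the paper's proof.
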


\begin{proof}
Let us describe a tree with nodes $(c_\eta,d_\eta)$, ($\eta \in \omega^{<\omega}$), such that:
\begin{enumerate}
\item for each $\rho \in \omega^\omega$, $\{ c_\eta, d_\eta : \eta \subseteq \rho \}$ is a complete $P_\infty$-graph,
where $\subseteq$ means initial segment.
\item for any $\nu \in \omega^{<\omega}$, $P_2(c_{\eta^\smallfrown i}, d_{\eta^\smallfrown j}) \iff i \leq j$.    
\end{enumerate}

For the base case $(\eta \in \omega^1)$, let $\langle c_i, d_i : i \in \omega \rangle$ be an indiscernible sequence witnessing the order property
(so $P_2(c_i, d_j) \iff i\leq j$) and assign the pair $(c_i, d_i)$ to node $i$. 

For the inductive step, suppose we have defined $(c_\eta, d_\eta)$ for $\eta \in \omega^n$. 
Write $E_\eta = \{ (c_\nu, d_\nu) : \nu \leq \eta  \}$ for the parameters used along the branch to $(c_\eta, d_\eta)$.
Using $\check{x}$ to mean the elements of the set $x$,
let $P^{f_\eta}_1$ be given by $P_{n+1}((y,z), \check{E_\eta})$. 
Let $\langle a_j, b_j : j \in \omega \rangle$ be an indiscernible sequence witnessing the order property inside this localization, 
and define $(c_{\eta^\smallfrown i}, d_{\eta^\smallfrown i}) := (a_j, b_j)$. 

Finally, let us check that this tree of parameters witnesses the tree property for $\vp_2$. On one hand, the order property in $P_2$
ensures that for each $n \in \omega^{<\omega}$, the set
\[ \{ \svrt  \vp_2(x;c_{\eta^\smallfrown i}, d_{\eta^\smallfrown i}) : i \in \omega   \} \]

\noindent is 1-consistent but 2-inconsistent. On the other hand, the way we constructed each localization $P^{f_\eta}_1$ ensured
that each path was a complete $P_\infty$-graph, thus naturally a complete $P^\prime_\infty$-graph, where $\langle P^\prime_n \rangle$ 
is the characteristic sequence of the conjunction $\vp_2$. 
\end{proof}

\begin{rmk}
Example \ref{ex-tree} shows that the condition that $\vp$ has the tree property is necessary, 
but not sufficient, for the order property in $P_2$ to be \emph{persistent}, Definition \ref{persistence} below. 
\end{rmk}

\begin{qst}
Is $SOP_2$ sufficient?
\end{qst}

Compare the issue of whether $SOP_2 \implies SOP_3$: see \cite{DzSh}, \cite{ShUs}. 

\subsection{Persistence}

Localization, Definition \ref{localization} above, gives rise to a natural limit question: what happens when certain $T_0$-configurations persist under all finite localizations?

\begin{defn} \emph{(Persistence)} \label{persistence}
Fix $(T, \vp) \mapsto \langle P_n \rangle$, $M \models T$ sufficiently saturated, and a positive base set $A$, possibly $\emptyset$. 
Let $X$ be a $T_0$-configuration, possibly infinite. Then $X$ is \emph{persistent} around the positive base set $A$
if for all finite $B \subset M$ and for all $P^f_1 \in Loc^B_1(A)$, $P^B_1$ contains witnesses for $X$. 

We will write $X$ is $A$-persistent to indicate that $X$ is persistent around $A$. 
\end{defn}

\begin{note} \label{persistence-asks}
\emph{Persistence asks whether all finite localizations around $A$ contain witnesses for some $T_0$-configuration $X$. The predicates $P_n$
mentioned in $X$ are, however, not the localized versions. We have simply restricted the set from which witnesses can be drawn. 
This is an obvious but important point: for instance, in the proof of Lemma \ref{stable-2} below it is important that the sequence of
$P_2$-inconsistent pairs found inside of successive localizations $P^{f_n}_1$ are $P_2$-inconsistent in the sense of $T_1$.} 
\end{note}

\begin{obs} \emph{(Persistence is a property of the theory $T$)}
The following are equivalent, fixing $T, \vp, \langle P_n \rangle$, $A \subset \mathcal{M}$ a small positive base set
in the monster model, and a $T_0$-configuration $X$. Write $P^f_1(M)$ for the set which $P^f_1$ defines in the model $M$. 
\begin{enumerate}
\item In some sufficiently saturated model $M \models T_1$ which contains $A$, $X$ is persistent around $A$ in $M$. That is,
for every finite $B \subset M$ and every localization $P^f_1 \in \loc^B_1(A)$, there exist witnesses to $X$ in $P^f_1(M)$. 
\item In every model $N \models T_1$, $N \supset A$, for every finite $B \subset N$, every localization $P^f_1 \in Loc^B_1(A)$,
and every finite fragment $X_0$ of $X$, $P^f_1(N)$ contains witnesses for $X_0$.
\end{enumerate}
\end{obs}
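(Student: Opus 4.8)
The plan is to prove the equivalence by establishing (1) $\Rightarrow$ (2) directly via a compactness/type-transfer argument, and (2) $\Rightarrow$ (1) by a saturation argument; the implication (2) $\Rightarrow$ (1) should be nearly immediate once (1) $\Rightarrow$ (2) is in place, so the bulk of the work is the first direction. The key point to exploit is that every ingredient in the statement is expressible in the complete theory $T_1$: a localization $P^f_1$ is, by Definition~\ref{localization}, a finite conjunction of instances $P_{r_i}(\check\sigma_i,\check\beta_i)$ with parameters in $B$, so for a fixed finite tuple $\bar b$ enumerating $B$, the statement ``$P^f_1$ is a non-trivial localization around $A$'' and ``$P^f_1$ contains witnesses to the finite $T_0$-configuration $X_0$'' are each expressible by first-order formulas with parameters from $A\bar b$, using the defining equation \eqref{tzero-eqn} for $X_0$. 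Persistence around $A$ is then a (possibly infinite) conjunction, over all finite $B$, all $f$, and all finite fragments $X_0$, of such statements — but crucially, for each individual $(B,f,X_0)$ the relevant sentence lives in the complete theory of $(M,A,\bar b)$, which by completeness of $T_1$ and the definability of the $P_n$ in $T$ depends only on the type of $A\bar b$, not on the ambient model.

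The first step is therefore to reduce to the type level: fix $N\models T_1$ with $N\supseteq A$, fix finite $B\subseteq N$ (enumerated by $\bar b$), fix $P^f_1\in\loc^B_1(A)$ and a finite fragment $X_0$ of $X$. I want to produce witnesses to $X_0$ in $P^f_1(N)$. The idea is to pull everything back to the model $M$ of (1). Since $P^f_1$ is non-trivial, by Definition~\ref{localization} the theory $T_1$ proves that for each $\ell$ there is a $P_\ell$-complete graph $C_\ell$ on which $P^f_1$ holds on all singletons, and $P^f_1$ holds on all of $A$; all of this is a first-order consequence of $\operatorname{tp}(A\bar b)$ computed in $T_1$. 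Realize the type $\operatorname{tp}^{T_1}(A\bar b/\emptyset)$ — wait, more carefully: $A$ is fixed as a small set in the monster, so I should instead realize $\operatorname{tp}^{T_1}(\bar b / A)$ inside $M$. Here is the subtlety: $A$ sits in both $M$ and $N$, but it is the \emph{same} small positive base set (Convention~\ref{localization-depends-on-T}), so by saturation of $M$ there is $\bar b'\in M$ with $\operatorname{tp}^{T_1}(\bar b'/A)=\operatorname{tp}^{T_1}(\bar b/A)$. Let $f'$ be the localization obtained from $f$ by replacing $\bar b$ with $\bar b'$; then $P^{f'}_1\in\loc^{\bar b'}_1(A)$ is a genuine (non-trivial) localization in $M$, because non-triviality transferred along the type. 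By hypothesis (1), $X$ is persistent around $A$ in $M$, so $P^{f'}_1(M)$ contains witnesses to $X$, hence in particular to the finite fragment $X_0$. The existence of such witnesses inside $P^{f'}_1$ is a first-order sentence over $A\bar b'$ (existentially quantifying the $|V_{X_0}|$-many witnessing parameters and asserting \eqref{tzero-eqn} relative to $f'$), so it belongs to $\operatorname{tp}^{T_1}(\bar b'/A)=\operatorname{tp}^{T_1}(\bar b/A)$, and therefore holds of $\bar b$ in $N$. This gives witnesses to $X_0$ in $P^f_1(N)$, which is exactly (2).

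For (2) $\Rightarrow$ (1): take $M\models T_1$ sufficiently saturated containing $A$ — say $|M|$ larger than $|A|+\aleph_0+|\mathcal L_0|$ and $M$ is $\aleph_1$-saturated, indeed $\kappa$-saturated for $\kappa>|X|$ if $X$ is infinite. Fix finite $B\subseteq M$ and $P^f_1\in\loc^B_1(A)$; I must find witnesses to all of $X$ inside $P^f_1(M)$. By (2), every finite fragment $X_0$ of $X$ has witnesses in $P^f_1(M)$. The set of formulas asserting ``the variables $(x_i)_{i\in V_X}$ all lie in $P^f_1$ and satisfy \eqref{tzero-eqn} for $X$'' is finitely satisfiable in $M$ by (2) applied to each finite fragment — here I use that $P^f_1$ and the $P_n$ are fixed definable predicates, so this is a type over the finite parameter set $A\cup B$ (or over $B$ if $A$ is also to be enumerated, but $A$ may be infinite, in which case I take the type over $AB$ and use $\kappa$-saturation with $\kappa>|A|+|X|$). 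By saturation this type is realized, yielding witnesses to $X$ in $P^f_1(M)$, so $X$ is persistent around $A$ in $M$.

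The main obstacle I anticipate is purely bookkeeping rather than conceptual: making precise the claim that non-triviality of a localization is transferred by the type $\operatorname{tp}^{T_1}(\bar b/A)$, and that $A$ can legitimately be treated as a fixed parameter set common to $M$ and $N$ (this is where Convention~\ref{localization-depends-on-T} and the remark that $T_1$ does not depend on the model, together with definability of $\langle P_n\rangle$ in the complete theory $T$, do the real work). One should also be slightly careful that ``$P^f_1$ holds on all $n$-tuples from $A$'' is used only as a schema of first-order statements when $A$ is infinite, and that the $\kappa$-saturation chosen for $M$ in the (2)$\Rightarrow$(1) direction is large enough to realize a type over $A$ when $|A|$ is infinite; none of this is difficult, but it is the part that needs the most attention to state correctly.
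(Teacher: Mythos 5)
Your proposal is correct and takes essentially the same approach as the paper: both directions reduce to the observation that non-triviality of a localization, containment of $A$, and the existence of witnesses for a finite fragment $X_0$ are all recorded in the $T_1$-type of the parameters $\bar b$ over $A$, so that saturation of $M$ over $A$ does all the work. The only cosmetic difference is that you prove $(1)\Rightarrow(2)$ directly by transporting $\operatorname{tp}^{T_1}(\bar b/A)$ into $M$ and pulling witnesses back, whereas the paper phrases it contrapositively (realize in $M$ the type asserting that the localization omits $X_0$, contradicting (1)); these are the same argument.
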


\begin{proof}
(2) $\implies$ (1) Compactness.

(1) $\implies$ (2) Suppose not, letting $P^f_1 \in Loc^B_1(A)$ and $X_0$ witness this. To this $P^f_1$ 
we can associate a $T_1$-type $p(y_1,\dots y_{|B|}) \in S(A)$ which says that the localization given by $f$ with 
parameters $y_1,\dots y_{|B|}$ contains $A$ but implies that $X_0$ is inconsistent. 
But any sufficiently saturated model containing $A$ will realize this type, and thus contain such a localization. 
\end{proof}

To reiterate Convention \ref{localization-depends-on-T}, then, we may, as a way of speaking, call a configuration ``persistent'' while working
in some fixed sufficiently saturated model, but we always refer to the corresponding property of $T$. 

\begin{cor}
Persistence around the positive base set $A$ remains a property of $T$ in the language with constants for $A$. 
\end{cor}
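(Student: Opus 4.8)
The plan is to derive this immediately from the previous Observation; the only content is that naming the elements of $A$ by constants does not disturb any of the data entering the definition of persistence. I would set $\mathcal{L}(A) := \mathcal{L} \cup \{c_a : a \in A\}$, let $T_A$ be the complete $\mathcal{L}(A)$-theory of $(\mathcal{M},a)_{a\in A}$, and let $(T_1)_A$ be the analogous expansion of $T_1$ by the constants $c_a$. Forgetting the constants gives a correspondence between (sufficiently saturated) models of $(T_1)_A$ and pairs $(N,A)$ with $N \models T_1$ and $A \subseteq N$ a positive base set, under which the statement ``$X$ is persistent around $A$ in $N$'' matches ``$X$ is persistent around the now $\emptyset$-named positive base set $A$'' in the corresponding $(T_1)_A$-model.

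The middle step is to check that the family $\loc^B_1(A)$ of nontrivial localizations and the witnessing conditions for $X$ are literally unchanged on passing to $\mathcal{L}(A)$. A localization is a finite conjunction $\bigwedge_{i\leq m} P_{r_i}(\check{\sigma}_i,\check{\beta}_i)$ in which the $P_n$ are $\mathcal{L}$-definable (hence $\mathcal{L}(A)$-definable) and the $\beta_i$ are drawn from $\fss$ of the parameter set. The constants $c_a$ may now be used to present those elements of the $\beta_i$ that lie in $A$, but neither $\loc^B_1(A)$ itself, nor the nontriviality clause about $P_\ell$-complete graphs, nor the clause that $P^f_1$ holds on all tuples from $A$, nor the notion of $P^f_1(N)$ containing witnesses for a finite fragment of $X$, is sensitive to whether these elements carry names. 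Thus ``persistent around $A$'' denotes the very same condition in both languages.

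Finally I would invoke the previous Observation with $(T_1)_A$ in place of $T_1$ and with $A$ playing the role of an $\emptyset$-definable positive base set: it shows that persistence of $X$ around $A$ is equivalent to the model-independent clause $(2)$ recorded there, hence is a property of the complete theory $(T_1)_A$, equivalently of $T_A$. The corollary then follows, and the only point deserving attention — which is really the substance of the statement rather than a genuine obstacle — is precisely the verification in the middle step that the localization apparatus is insensitive to adjoining constants for $A$; granting that, there is nothing more to do.
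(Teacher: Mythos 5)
The paper leaves this corollary without an explicit proof, treating it as an immediate consequence of the preceding Observation, and your argument is exactly the correct unpacking of why: adjoining constants for $A$ leaves $\loc^B_1(A)$, the nontriviality clause, and the witnessing conditions for $X$ literally unchanged, so the Observation applies verbatim to $(T_1)_A$. Your approach matches the paper's intent; nothing is missing.
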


Finally, let us check the (easy) fact that persistence of some $T_0$-configuration around $\emptyset$ in some given sequence
$\langle P_n \rangle$ implies its persistence around any positive base set $A$ for that sequence.
Recall that all localizations are, by definition, non-trivial. 

\begin{fact} \label{one-pt-extns}
Suppose that $X$ is an $\emptyset$-persistent $T_0$-configuration in the characteristic sequence $\langle P_n \rangle$ 
and $A$ is a positive base set for $\langle P_n \rangle$. Then $X$ remains persistent around $A$.
\end{fact}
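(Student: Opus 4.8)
The plan is to show that any localization around $A$ can be ``pulled back'' to a localization around $\emptyset$, so that witnesses for $X$ guaranteed by $\emptyset$-persistence land inside the given localization. First I would unpack the definitions: a localization $P^f_1 \in \loc^B_1(A)$ is a finite conjunction $\bigwedge_{i \leq m} P_{r_i}(\check\sigma_i, \check\beta_i)$ with the property that, in every model of $T_1$ containing $A$ and $B$, this formula holds on all of $A$, and moreover it is non-trivial, meaning $T_1$ proves the existence of arbitrarily large $P_\ell$-complete graphs on which $P^f_1$ holds. The key observation is that since $A$ is a positive base set (a $P_\infty$-complete graph), $A$ itself is such a complete graph: $P^f_1$ holds on all tuples from $A$ by the last clause of Definition \ref{localization}(1), and $A$ is $P_\ell$-complete for all $\ell$ by Observation \ref{bp}(5). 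So $P^f_1$ is \emph{already} a non-trivial localization around $\emptyset$, i.e. $P^f_1 \in \loc^B_1(\emptyset)$ — the only thing one must check is that dropping the requirement ``contains $A$'' does not destroy non-triviality, and it does not, because non-triviality is a statement purely about the existence of complete graphs inside $P^f_1$, which is unaffected.

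The second and final step is then immediate: since $X$ is $\emptyset$-persistent, by definition every $P^g_1 \in \loc^B_1(\emptyset)$ (for every finite $B$) contains witnesses for $X$; applying this to $g = f$ and the same $B$ gives witnesses for $X$ inside $P^f_1$. As $P^f_1 \in \loc^B_1(A)$ was arbitrary, $X$ is persistent around $A$. I would also remark, per the Observation preceding the statement (``persistence is a property of $T$''), that it suffices to produce finite fragments of $X$ in each localization interpreted in a sufficiently saturated model, so there are no subtleties about which model the witnesses live in.

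The one point requiring a little care — and the only candidate for an ``obstacle,'' though it is mild — is the bookkeeping around the parameter set $B$ and the clause ``$P^f_1$ holds on all $n$-tuples from $A$.'' One must make sure the inclusion $\loc^B_1(A) \subseteq \loc^B_1(\emptyset)$ really holds with the \emph{same} $B$: a localization around $A$ is allowed to use $A$ implicitly (the conditions $P_{r_i}(\check\sigma_i,\check\beta_i)$ have parameters $\beta_i$ drawn from $B$, not from $A$, so $A$ enters only through the semantic requirement that the formula hold on $A$ and that $B$ together with $A$ witness this). Since we are only weakening that semantic requirement when we pass to $\emptyset$, the containment is clear. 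Thus the whole argument is essentially a definitional unwinding, with Observation \ref{bp}(5) doing the real work by certifying that a positive base set is an admissible ``complete graph'' for the non-triviality clause.
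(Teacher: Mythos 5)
Your proof is correct for the literal statement, and it is genuinely shorter than the paper's. Your key observation is the set inclusion $\loc^B_1(A) \subseteq \loc^B_1(\emptyset)$ for every finite $B$: the only difference between the two collections is the clause ``$P^f_1$ holds on all tuples from $A$,'' which is vacuous when $A=\emptyset$; non-triviality is an independent condition about the existence of $P_\ell$-complete graphs inside $P^f_1$ and is unaffected. Given that inclusion, $\emptyset$-persistence (a universal statement over the larger collection) immediately implies $A$-persistence (the same universal statement over the smaller one). That deduction is valid, and your bookkeeping worry about keeping the same $B$ is handled correctly.

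The paper takes a different and heavier route: it fixes the $\mathcal{L}_0$-type $p$ of the configuration $X$ and the type $q \in S(A)$ of a $1$-point $P_\infty$-extension of $A$, and proves by compactness that $q(x_0), q(x_1), \ldots, p(\overline{x})$ is jointly consistent, arguing that a finite obstruction $q' \vdash \neg p'$ would itself define a non-trivial localization around $A$ (hence around $\emptyset$) with no witnesses for $X$, refuting $\emptyset$-persistence. What that argument buys, which yours does not, is the stronger conclusion that the witnesses for $X$ can always be taken from among the consistent $1$-point $P_\infty$-extensions of $A$ — i.e.\ from elements that preserve the positivity of the base set. This extra content is exactly what the paper invokes downstream: Lemma~\ref{array-for-2}, Lemma~\ref{stable-k} and Lemma~\ref{simple-trees} all cite Fact~\ref{one-pt-extns} to choose the new array or tree elements as $1$-point extensions of $A$, so that the growing configuration remains a positive base set at every stage. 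Your argument proves Fact~\ref{one-pt-extns} as stated, but if you were to go on and reprove those later lemmas you would find you need the stronger ``$1$-point extension'' form, so it is worth keeping the paper's compactness argument in mind as the version that is actually used.
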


\begin{proof}
Let $p(x_0,\dots )$ in the language $\mathcal{L}(=,P_1,P_2,\dots)$ describe the type, in $V_X$-many variables, of the configuration $X = (V_X, E_X)$.
Let $q(y) \in S(A)$ be the type of a 1-point $P_\infty$-extension of $A$ in the language $\mathcal{L}_0 = \{ P_n : n<\omega \} \cup \{=\}$. 
We would like to know that
$q(x_0), q(x_1), \dots, p(x_0, \dots)$ is consistent, i.e., that we can find, in some given localization,
witnesses for $X$ from among the elements which consistently extend $A$. If not, for some finite subset
$A^\prime \subset A$, some $n<\omega$, and some finite fragments $q^\prime$ of $q|_{A^\prime}$ and
$p^\prime$ of $p$,
\[ q^\prime(x_0) \cup \dots \cup q^\prime(x_n)  \vdash \neg p^\prime(x_0,\dots x_n) \]

\noindent But now localizing $P_1$ according to the conditions on the lefthand side (which are all positive
conditions involving the $P_n$ and finitely many parameters $A^\prime$) shows that $X$ is not persistent,
contradiction.
\end{proof}

\section{Dividing lines: Stability, Simplicity, NIP}
\label{dividing-lines}

The first natural question for persistence is: given $n$, when isn't it possible to localize $P_1$ so that $P_n$ is a complete graph?
The answer, surprisingly, is: in the presence of the independence property. 
This section gives the argument, using the language of persistence to give a new description of NIP and of simplicity,
Theorem \ref{char-stable} and Theorem \ref{char-simple} below. 
Recall that a theory $T$ is NIP \cite{Sh715} if no formula of $T$ has the independence property; for more on this
hypothesis, see \cite{Usv}. 

\subsection{NIP: the case of $P_2$} 
We will see that if $\vp$ is NIP then we can localize around any fixed positive base set so that
$P_2$ is a complete graph.

The argument in this technically simpler case will generalize
without too much difficulty. We first revisit an avatar of the independence property. 

\begin{defn} \label{omega-2-arrays} \emph{($(\omega, 2)$-arrays revisited)}
\begin{enumerate}
\item The predicate $P_n$ is $(\omega, 2)$ if there is $C := \{ a^t_i: t<2, i<\omega \}$ 
such that for all $\ell \leq n$, any $\ell$-element subset $C_0$,
\[ P_\ell(C_0) ~~\iff~~ \left( a^t_i, a^s_j \in C_0 \implies (i\neq j) \lor (t=s) \right) \]
\item If for all $n<\omega$, $P_n$ is $(\omega,2)$, we say that $P_\infty$ is $(\omega, 2)$. 

\item A \emph{path} through the $(\omega,2)$-array $A$ is a set $X \subset A$ which contains no more than one element from each column.
So paths are positive base sets.  
\end{enumerate}
\end{defn}

\begin{rmk} \label{omega-2}
If $P_\infty$ is $(\omega, 2)$, then $\vp$ has the independence property.
\end{rmk}

\begin{proof}
Let $X$ be a maximal path through the $(\omega, 2)$-array $A$. Choose any $\sigma, \tau \subset X$ finite and disjoint.
Let $Y_{\sigma, \tau} \subset A$ be a maximal path such that $\sigma \subset Y_{\sigma,\tau}$ and $Y_{\sigma,\tau} \cap \tau = \emptyset$.
$Y_{\sigma,\tau}$ is a positive base set, so any element $c$ realizing the corresponding $\vp$-type will satisfy
$a \in \sigma \rightarrow \vp(c;a)$ and $b \in \tau \rightarrow \neg \vp(c;b)$. Thus $\vp$ has the independence property on $X$.
\end{proof}

\begin{lemma} \label{springboard-2} (Springboard lemma for $2$)
If $\vp$ is stable then there is a finite localization $P^f_1$ for which TFAE:
\begin{enumerate} 
\item There exists $X \subset P^f_1$, $X$ an $(\omega,2)$-array wrt $P_2$
\item There exists $Y \subset P^f_1$, $Y$ an $(\omega,2)$-array wrt $P_\infty$
\end{enumerate}  
\end{lemma}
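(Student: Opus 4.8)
The direction (2) $\implies$ (1) is immediate and holds in any localization, since a $(\omega,2)$-array with respect to $P_\infty$ is in particular one with respect to $P_2$ (take $n=2$ in Definition \ref{omega-2-arrays}); so no localization is needed there. The whole content is the converse (1) $\implies$ (2), and the point is to produce \emph{one} finite localization $P^f_1$ in which every $P_2$-array is automatically a $P_\infty$-array. The idea I would pursue is contrapositive and dynamic, in the spirit of Observation \ref{dividing}, Conclusion \ref{ps-stable} and the lemma preceding this subsection: if no finite localization forces the upgrade, then one can iterate the failures to build a configuration witnessing instability of $\vp$, contradicting the hypothesis. So I want to extract, from the assumption that (1)$\not\Rightarrow$(2) persists under all localizations, an $(\omega,2)$-diagram (Definition \ref{diagrams-arrays}(1)), which by Claim \ref{diagrams-instability} is equivalent to $\vp$ being unstable.

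Concretely, here is the skeleton. First I would quantify the defect: given a $(\omega,2)$-array $X$ with respect to $P_2$ that fails to be a $P_\infty$-array, there is some finite $n$ and some finite sub-multiset of $X$ containing at most one element from each column on which $P_n$ fails even though all its $P_2$-subpairs hold — call this an ``$n$-defect''. A compactness/indiscernibility reduction (as in the Remark after Observation \ref{op} and the proof of Observation \ref{bp}(4)) lets me assume the array and its defects are arranged indiscernibly, so that a single value of $n=n^*$ and a single ``shape'' of defect recurs. Next, suppose toward contradiction that for every finite localization $P^f_1 \in \loc^B_1(\emptyset)$ there is still a $P_2$-array inside $P^f_1$ carrying such a defect. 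Then I iterate: having built a finite approximation, localize around the finitely many parameters used so far (exactly as $E_\eta$ is used in the proof of the lemma before Definition \ref{persistence}), find a fresh $P_2$-array with a defect inside that localization, and graft one more level. The defects — pairs (or $n^*$-tuples) that are $P_2$-consistent but $P_{n^*}$-inconsistent — are the branching data: at each node the $P_2$-consistency gives a consistent path of $\vp$-types extending the base set, while the $P_{n^*}$-inconsistency of the defect tuple gives the split. Passing from the $(\omega, k)$ branching (with $k=n^*$) down to a binary $(\omega,2)$-diagram is routine since $k$-fold inconsistency at a node can be thinned to a 2-splitting by a pigeonhole/Ramsey argument, as in the passage used in Claim \ref{diagrams-instability}.

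This produces an $(\omega,2)$-diagram in $\langle P_n\rangle$, hence by Claim \ref{diagrams-instability} the formula $\vp$ is unstable, contradicting the hypothesis. Therefore some finite localization $P^f_1$ has no $P_2$-array with a defect, which is exactly the statement that in $P^f_1$ every $(\omega,2)$-array with respect to $P_2$ is already an $(\omega,2)$-array with respect to $P_\infty$; together with the trivial direction this gives the equivalence.

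The main obstacle I anticipate is the bookkeeping in the iteration: I must be sure that the $P_{n^*}$-inconsistencies of the defect tuples found \emph{inside} successive localizations $P^{f_i}_1$ remain genuine $P_{n^*}$-inconsistencies in the sense of $T_1$ (not merely relative to the localized predicate) — this is precisely the subtlety flagged in Note \ref{persistence-asks} — and that the parameters chosen at level $i+1$ from a localization around level-$\leq i$ data do not collide with earlier parameters, which is handled by the same ``sparseness'' device ($\mathcal{S}$) used in the proof of Claim \ref{diagrams-instability}. The second delicate point is checking that consistency of paths through the assembled diagram really follows from the nested $P_2$-array structure; this needs the one-point-extension formalism (Observation \ref{bp}(5) and Fact \ref{one-pt-extns}) to see that each localization was chosen so that a full branch is a $P_\infty$-complete graph, exactly as in the proof of the lemma at the end of Section \ref{section:l-p}.
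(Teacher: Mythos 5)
The paper's proof is a short, static argument: stability of $\vp$ gives simplicity and lowness of both $\vp$ and $\vp_2$, so by Observation \ref{dividing} one fixes a single localization $P^f_1$ in which neither $\vp$ nor $\vp_2$ is dividable. Then for any indiscernible $(\omega,2)$-array $Z$ for $P_2$ inside $P^f_1$, if some path is not a $P_\infty$-complete graph one takes a minimal $n$-inconsistent increasing tuple and extracts from $Z$ a sequence of pairs $(c^0_i, c^1_{i+1})$ that is $1$-consistent but $n$-inconsistent --- a dividing chain for $\vp_2$ with all parameters inside $P^f_1$, contradiction. No iteration or auxiliary diagram is built.

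Your proposal is a genuinely different route (dynamic, contrapositive, assemble a witness to instability if the upgrade fails everywhere), but as written it has two concrete gaps.

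First, the ``defect'' you isolate --- an $n^*$-subset of a path that is $P_2$-consistent on all pairs but $P_{n^*}$-inconsistent --- cannot serve as the splitting data for an $(\omega,2)$-diagram. Definition \ref{diagrams-arrays}(1) requires $\neg P_2(a_{\eta^\frown 0}, a_{\eta^\frown 1})$ at each branch, and your defect tuples contain \emph{no} $P_2$-incompatible pairs by design; the pigeonhole/Ramsey thinning you cite would reduce $n^*$-fold inconsistency to a $2$-fold inconsistency only if a $P_2$-inconsistent pair were present somewhere in the tuple, which is exactly what is excluded. You cannot manufacture $\neg P_2$ from a $P_2$-consistent set. (The only $\neg P_2$ pairs available in an $(\omega,2)$-array lie within a single column, but those aren't where your iteration is building splits.) Consequently Claim \ref{diagrams-instability} does not apply to whatever the iteration produces.

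Second, even if you retarget the construction to witness the tree property via the $P_{n^*}$-inconsistent tuples, you still need each branch to be a $P_\infty$-complete graph, and the array only gives $P_2$-consistency along paths. To make a branch $P_\infty$-consistent you would need to localize so as to discard exactly those elements that fail higher $P_m$-consistency with the branch so far --- but that is precisely the $P_2 \to P_\infty$ upgrade being proved, so the step is circular. This is the same subtlety flagged in Note \ref{persistence-asks} that you rightly worry about, but your sketch does not resolve it: Fact \ref{one-pt-extns} gives $1$-point $P_\infty$-extensions of an existing $P_\infty$-complete graph, whereas here you only have a $P_2$-complete path to extend. The paper's choice of a single localization in which $\vp_2$ itself is not dividable sidesteps both problems and is strictly simpler; I'd recommend comparing with the proof of Lemma \ref{springboard-k}, which shows how far this static dividing argument scales before dynamic iteration (as in Lemma \ref{stable-k}) becomes necessary.
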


\begin{proof}
Choose the localization $P^f_1$ according to Observation \ref{dividing} so that neither $\vp$ nor $\vp_2$ are dividable using parameters from
$P^f_1$. This is possible because stable formulas are simple and low, and $\vp$ stable implies $\vp_2$ stable. 
Let $Z = \langle c^t_i : t<2, i<\omega \rangle \subset P^f_1$ be an indiscernible sequence of pairs which is an $(\omega, 2)$-array
for $P_2$.
Each of the sub-sequences $\langle c^0_i : i<\omega \rangle$, $\langle c^1_i : i<\omega \rangle$ is indiscernible, 
so will be either $P_2$-complete or $P_2$-empty;
by choice of $P^f_1$, they cannot be empty.

It remains to show that any path $X \subset Z$ is a $P_\infty$-complete graph. 
Suppose not, and let $n$ be minimal so that the $n$-type of some increasing sequence of elements $z^{t_1}_1,\dots z^{t_n}_n$  
implies $\neg \exists x (\bigwedge_{i<n} \vp(x;z^{t_i}_i))$. Choose an infinite indiscernible 
subsequence of pairs $Z^\prime \subset Z^2$ of the form $\langle c^0_i, c^1_{i+1} : i \in W \subset \omega \rangle$. Then the set
$\{ \vp(x;c^0_i) \land \vp(x;c^1_{i+1}) : i \in W \}$ will be 1-consistent by definition but $n$-inconsistent by assumption
(though not necessarily sharply $n$-inconsistent). This contradicts the assumption that $\vp_2$ is not dividable in $P^f_1$. 
\end{proof}

When the formula is low but not necessarily simple, bootstrapping up to $P_\infty$ is still possible but requires 
a stronger initial assumption on the array. 

\begin{cor} \label{springboard-2-low}
Suppose the formulas $\vp$ and $\vp_2$ are low. Then there exists
$k<\omega$ such that, in any localization $P^f_1$, TFAE:
\begin{enumerate} 
\item There exists $X \subset P^f_1$, $X$ an $(\omega,2)$-array wrt $P_k$
\item There exists $Y \subset P^f_1$, $Y$ an $(\omega,2)$-array wrt $P_\infty$
\end{enumerate} 
\end{cor}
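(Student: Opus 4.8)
The plan is to mimic the proof of Lemma \ref{springboard-2}, but to track carefully where simplicity was used and replace it by the weaker lowness hypothesis, at the cost of starting the array hypothesis at some fixed finite level $P_k$ rather than at $P_2$. Recall that in Lemma \ref{springboard-2} simplicity played two roles: first, via Observation \ref{dividing} and Corollary \ref{simple-and-low}, it let us localize so that \emph{no} instance of $\vp$ or $\vp_2$ divides at all; second, the failure of dividing in the localization $P^f_1$ was what produced the contradiction in the ``bootstrapping'' step. When $\vp$ and $\vp_2$ are merely low but not simple, we can no longer localize away all dividing. However, lowness does give us a uniform bound: there is a single $k_0 < \omega$ such that any instance of $\vp$ (resp. $\vp_2$) that divides already $k_0$-divides. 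Set $k := 2k_0$ (or a convenient multiple; the precise constant will fall out of the Observation \ref{order-dividable}-style counting). The claim is that with this $k$, the equivalence (1) $\iff$ (2) holds in \emph{any} localization $P^f_1$, with no need to further localize.

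The key steps, in order. First, fix $k_0$ witnessing lowness of both $\vp$ and $\vp_2$ simultaneously (take the max of the two constants), and set $k$ accordingly. Second, for the nontrivial direction (1) $\implies$ (2): given $X \subset P^f_1$ an $(\omega,2)$-array with respect to $P_k$, pass by compactness and Ramsey to an indiscernible sequence of pairs $Z = \langle c^t_i : t < 2, i < \omega \rangle \subset P^f_1$ which is still an $(\omega,2)$-array with respect to $P_k$. Third, observe that each of the columns $\langle c^0_i : i < \omega\rangle$ and $\langle c^1_i : i<\omega\rangle$ is $P_k$-complete by hypothesis, hence (by monotonicity, Observation \ref{bp}(3)) $P_2$-complete, hence — being indiscernible — $P_n$-complete for all $n$; so the columns are already $P_\infty$-complete graphs. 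Fourth, and this is the heart: show any path $X \subset Z$ is $P_\infty$-complete. Suppose not; take $n$ minimal so that the $n$-type of some increasing $z^{t_1}_1, \dots, z^{t_n}_n$ implies $\neg \exists x \bigwedge_{i<n}\vp(x;z^{t_i}_i)$. Note $n > k$, because $X$ being a $P_k$-array means every $k$-element subset of a path is $P_k$-complete, i.e. the corresponding $\vp_k$-conjunction is consistent. Now extract an indiscernible subsequence of the relevant $\vp_2$-type instances $\{\vp(x;c^0_i)\wedge\vp(x;c^1_{i+1}) : i \in W\}$; this set is $1$-consistent and $n$-inconsistent, so the instance of $\vp_2$ in question divides, hence by lowness $k_0$-divides — but then some $k_0$-sized subset of the path (or a nearby path) already fails to be $P_{k_0 \cdot 2}$-consistent, contradicting that $X$ is an array with respect to $P_k$ with $k = 2k_0 \geq $ that bound. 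Hence every path is $P_\infty$-complete, and any maximal path through $Z$ together with $Z$ itself furnishes the desired $(\omega,2)$-array $Y$ for $P_\infty$ (it is already inside $P^f_1$). The reverse direction (2) $\implies$ (1) is immediate from monotonicity, since $P_\infty$-completeness implies $P_k$-completeness and $P_2$-incomparability of distinct-column elements is preserved.

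The main obstacle I expect is pinning down the exact relationship between the lowness constant $k_0$ and the level $k$ at which the array hypothesis must be posed, i.e. getting the bookkeeping in the fourth step right. The subtlety is that when we pass from ``$\{\vp(x;c^0_i)\wedge\vp(x;c^1_{i+1})\} = \{\vp_2(x; c^0_i, c^1_{i+1})\}$ is $n$-inconsistent'' back to a statement about $P_m$-emptiness of a subset of a path, the indices get doubled (each pair contributes two parameters $c^0_i, c^1_{i+1}$ from the array), exactly as in the passage between Observation \ref{op} and Observation \ref{order-dividable}; so the honest constant is something like $k = 2k_0$, and one must check that the $k_0$-fold inconsistency produced by lowness genuinely contradicts $P_k$-completeness of a $k$-element (i.e. $k_0$-pair) subset of the path. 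I would also double-check that lowness of $\vp_2$ is really available to us rather than having to be derived — the statement assumes it, so this is fine, but it is worth noting that unlike stability, lowness of $\vp$ does not obviously transfer to $\vp_2$, which is precisely why the hypothesis lists both formulas explicitly. Everything else (the Ramsey/indiscerniblity extraction, the monotonicity arguments, the reverse direction) is routine.
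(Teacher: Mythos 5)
Your proposal follows essentially the same route as the paper's one-line proof, which simply points back to the argument of Lemma \ref{springboard-2}, replaces the localization that kills all dividing (available under simplicity) with a uniform bound $k_0$ on the arity of dividing of $\vp$ and $\vp_2$ (available under lowness), and reads off that $k > 2k_0$ suffices with no further localization. Your bookkeeping in the fourth step — extracting $\vp_2$-instances from the offset pairs, invoking lowness to cap the sharp dividing arity, and playing this against the $P_k$-array hypothesis — is exactly the paper's intended argument, and your remark that the $k_0$-to-$2k_0$ doubling mirrors the passage from Observation \ref{op} to Observation \ref{order-dividable} is correct.

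One small caution on your third step: the inference ``$P_2$-complete and indiscernible, hence $P_n$-complete for all $n$'' is not justified in general — indiscernibility alone does not rule out an indiscernible sequence that is pairwise $P_2$-consistent but $P_m$-inconsistent for some $m>2$; indeed that is precisely the phenomenon the rest of the argument is designed to exclude. Fortunately this step is redundant: a row of the array is itself a path, so the general path argument in your fourth step already yields that rows are $P_\infty$-complete, and you can simply delete the indiscernibility-gives-$P_\infty$ clause. You are also right to note that lowness of $\vp_2$ is a genuine extra hypothesis rather than something derivable from lowness of $\vp$; the paper lists both in the statement for exactly the reason you give.
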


\begin{proof}
Let $k_0$ be a uniform finite bound on the arity of dividing of instances of $\vp$ and $\vp_2$, using lowness; 
by the proof of the previous Lemma, any $k > 2k_0$ will do.
\end{proof}

Recall from Definition \ref{notation} that an ``empty pair'' is the $T_0$-configuration given by $V_x=2, E_x=\{ \{1\},\{2\} \}$,
i.e., a pair y, z such that $P_1(y)$, $P_1(z)$ but $\neg P_2(y,z)$.

\begin{lemma} \label{array-for-2}
Suppose $\vp$ is stable, and that every localization $P^f_1$ around some fixed 
positive base set $A$ contains an empty pair. 
Then $P_\infty$ is $(\omega, 2)$. 
\end{lemma}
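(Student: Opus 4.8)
The plan is to combine Lemma~\ref{springboard-2} (the Springboard lemma) with a persistence argument that manufactures an $(\omega,2)$-array with respect to $P_2$ inside an arbitrary localization. First I would invoke Lemma~\ref{springboard-2}: since $\vp$ is stable, fix the finite localization $P^f_1$ it provides, on which the existence of an $(\omega,2)$-array with respect to $P_2$ is equivalent to the existence of one with respect to $P_\infty$. By Fact~\ref{one-pt-extns} (and the discussion of persistence being a property of $T$), the hypothesis that every localization around $A$ contains an empty pair persists under passing to $P^f_1$ and to any further finite localization; so without loss of generality I may assume we are already working inside $P^f_1$, and the empty-pair configuration is still persistent around $A$. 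The goal then reduces to: persistence of the empty pair, plus stability (hence lowness and simplicity of $\vp$ and $\vp_2$, hence no dividing of $\vp$ or $\vp_2$ in $P^f_1$), forces an $(\omega,2)$-array with respect to $P_2$.

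To build the array I would proceed inductively, constructing the columns one at a time. Start by taking any empty pair $a^0_0, a^1_0$ in $P^f_1$ (which exists by persistence). Having built finitely many columns $\{a^t_i : t<2, i<m\}$ forming an $(\omega,2)$-array-with-respect-to-$P_2$ fragment all of whose elements are consistent one-point extensions of $A$ together, localize $P_1$ further by conjoining the conditions ``$P_{r}(y,\check\sigma)$'' for all the relevant finite subsets $\sigma$ of $A \cup \{a^t_i : t<2, i<m\}$ that record the $P_\infty$-completeness of the paths chosen so far — this is a legitimate localization around $A$ because those paths are positive base sets. Persistence of the empty pair in this new localization yields a fresh pair $a^0_m, a^1_m$ with $\neg P_2(a^0_m,a^1_m)$ and with both elements $P_2$-consistent with every element of every path through the previous columns. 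Because $\vp$ (hence $\vp_2$) is not dividable in $P^f_1$, an indiscernible sequence extracted from a would-be empty graph must be $P_2$-complete or genuinely witness dividing; so the only empty pairs available are the ``matchstick''-type ones lying in distinct columns, and by indiscernibility/compactness I can arrange the full $\omega\times 2$ pattern: $P_2(a^t_i, a^s_j) \iff (i\neq j)\lor(t=s)$. Then Lemma~\ref{springboard-2} upgrades this to an $(\omega,2)$-array with respect to $P_\infty$, i.e.\ $P_\infty$ is $(\omega,2)$.

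The main obstacle I anticipate is the bookkeeping in the inductive step: ensuring that each newly produced empty pair is simultaneously consistent (in the $P_2$, and ultimately $P_\infty$, sense) with \emph{all} the previously chosen paths, not merely with one, and that the resulting infinite object can be made indiscernible so that the clean biconditional $P_\ell(C_0) \iff (a^t_i,a^s_j\in C_0 \implies (i\neq j)\lor(t=s))$ holds for all $\ell\le n$, not just for pairs. This is where non-dividability of $\vp_2$ in $P^f_1$ does the real work — it is precisely what rules out an infinite empty graph arising among the elements of a single row, and more generally (as in the proof of Lemma~\ref{springboard-2}) rules out the minimal-$n$ inconsistency that would spoil $P_\infty$-completeness of a path. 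I would handle the indiscernibility by passing to an indiscernible sequence of pairs at the end via compactness and Ramsey, checking that this operation preserves both the ``$\neg P_2$ within a column'' and the ``$P_2$ across columns'' relations, and then appealing to the Springboard lemma one last time. A minor point to verify is that all the intermediate localizations are non-trivial, which follows because $A$ itself (being a positive base set) always witnesses a $P_\ell$-complete graph satisfying the conjoined conditions.
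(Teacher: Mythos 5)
Your proposal matches the paper's proof in structure: invoke the Springboard Lemma~\ref{springboard-2} to fix a base localization on which $P_2$-arrays lift to $P_\infty$-arrays, build an $(\omega,2)$-array for $P_2$ inductively by using persistence together with Fact~\ref{one-pt-extns} to find a fresh empty pair of consistent one-point extensions of $A$ in each successively tighter localization (conjoining $P_2$-consistency with all previously chosen elements), and then apply the Springboard Lemma once more to upgrade to $P_\infty$. The only inessential differences are that you conjoin $P_r$-conditions for various $r$ where the paper needs only $P_2$, and your closing discussion of dividability and indiscernibility is redundant here since that work is already packaged inside the Springboard Lemma.
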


\begin{proof}
Choose $P^{f_0}_1$ to be a localization given by Lemma \ref{springboard-2}.
We construct an $(\omega, 2)$-array as follows.  

At stage $0$, let $c^0_0, c^1_0$ be any pair of $P_2$-incompatible elements each of which is a consistent $1$-point extension of $A$ in $P^{f_0}_1$.
At stage $n+1$, write $C_n$ for $\{ c^t_i : t<2,i\leq n\}$ and 
suppose we have defined $P^{f_{n}}_1 \in \loc^{C_{n}}_1(A)$. By hypothesis, there are $c^0_{n+1}, c^1_{n+1} \in P^{f_{n}}_1$
such that $\neg P_2(c^0_{n+1}, c^1_{n+1})$ and such that each $c^i_{n+1}$ is a consistent 1-point extension of $A$ (Fact \ref{one-pt-extns}). 
Let $C_{n+1} = C_n \cup \{c^0_{n+1}, c^1_{n+1} \}$ and define $P^{f_{n+1}}_1 \in \loc^{C_{n+1}}_1(A)$ by
\[  P^{f_{n+1}}_1(y) = P^f_1(y) \land P_2(y;c^0_{n+1}) \land P_2(y; c^1_{n+1}) \]
Thus we construct an $(\omega,2)$-array for $P_2$, as desired. Applying Lemma \ref{springboard-2} we obtain an
$(\omega, 2)$-array for $P_\infty$. 
\end{proof}

\begin{concl} \label{stable-2}
Suppose that $\vp$ is stable, $(T, \vp) \mapsto \langle P_n \rangle$ and $A$ is a positive base set. Then empty pairs are not persistent around $A$.
\end{concl}

\begin{proof}
By stability, we may work inside the localization given by Lemma \ref{springboard-2}. 
Suppose empty pairs were persistent around $A$. By Lemma \ref{array-for-2}, $P_\infty$ is $(\omega,2)$, which by Remark \ref{omega-2} implies that $\vp$ has the independence property: contradiction. 
\end{proof}

In order to replace the hypothesis of stable with low, we will need to replace $P_2$-consistency in the proof of Lemma \ref{array-for-2}
with $P_k$-consistency. This argument is given in full generality in Lemma \ref{stable-k}, but here we state the result: 

\begin{cor} \label{low-2} (to Corollary \ref{springboard-2-low})
Suppose $\vp$ and $\vp_2$ are both low.
Suppose every localization $P^f_1$ around some fixed positive base set $A$ contains an empty pair. 
Then $P_\infty$ is $(\omega, 2)$. 
\end{cor}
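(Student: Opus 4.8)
The plan is to imitate the proof of Lemma \ref{array-for-2}, replacing $P_2$-consistency with $P_k$-consistency throughout, where $k$ is the uniform bound furnished by Corollary \ref{springboard-2-low}; the hypotheses that $\vp$ and $\vp_2$ are low are used precisely to invoke that corollary, in exactly the role that ``stable formulas are simple and low'' played in Lemma \ref{array-for-2}. So first fix such a $k$, and fix any localization $P^{f_0}_1 \in \loc^B_1(A)$ to work inside (everything constructed below will be a refinement of it, so the resulting configuration will lie in $P^{f_0}_1$). It then suffices to produce an $(\omega,2)$-array with respect to $P_k$ inside $P^{f_0}_1$: Corollary \ref{springboard-2-low} immediately upgrades such an array to an $(\omega,2)$-array with respect to $P_\infty$ in $P^{f_0}_1$, which is exactly the assertion that $P_\infty$ is $(\omega, 2)$.

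To build the array, I would construct by induction on $n$ the columns $\{c^0_n, c^1_n\}$ together with a chain of successive localizations $P^{f_n}_1 \in \loc^{C_n}_1(A)$, each refining the previous, where $C_n = \{c^t_i : t < 2,\, i \le n\}$. At stage $n+1$: since empty pairs are persistent around $A$ by hypothesis, the localization $P^{f_n}_1$ contains a $P_2$-incompatible pair, and, as in Lemma \ref{array-for-2} and using Fact \ref{one-pt-extns}, we may take it to be a pair $c^0_{n+1}, c^1_{n+1}$ of $1$-point $P_\infty$-extensions of $A$, which keeps the next localization around $A$ non-trivial. Then set
\[ P^{f_{n+1}}_1(y) := P^{f_n}_1(y) \land \bigwedge_\sigma P_{|\sigma|+1}(y, \check{\sigma}), \]
where $\sigma$ ranges over the finitely many subsets of $C_{n+1}$ that contain at most one element from each column, have size at most $k-1$, and meet $\{c^0_{n+1}, c^1_{n+1}\}$. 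This is the one structural difference from Lemma \ref{array-for-2}: when $k = 2$ one only records $P_2(y, c^0_{n+1}) \land P_2(y, c^1_{n+1})$, whereas for general $k$ one must record $P$-consistency of the new parameters with all short path-subsets built so far, not merely with single earlier elements.

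Finally I would check that $C = \{c^t_n : t < 2,\, n < \omega\}$ is an $(\omega,2)$-array with respect to $P_k$ in the sense of Definition \ref{omega-2-arrays}. The inconsistency direction is immediate from $\neg P_2(c^0_n, c^1_n)$ together with Monotonicity (Observation \ref{bp}(3)): any subset of size $\le k$ meeting a column twice contains a $P_2$-inconsistent pair. For the consistency direction, a subset $C_0$ of size $\ell \le k$ with at most one element per column has a member $c^t_m$ of largest column index $m$; then $\tau := C_0 \setminus \{c^t_m\}$ is a path-subset of $C_{m-1}$ of size $\ell - 1 \le k-1$, so the conjunct $P_\ell(y, \check{\tau})$ was present in the localization from which $c^t_m$ was drawn, whence $P_\ell(C_0)$ holds --- in the sense of $T_1$, not of the localized predicate, as Note \ref{persistence-asks} emphasizes. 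The main obstacle is the interlocking bookkeeping of this induction rather than any isolated step: one must keep each $P^{f_{n+1}}_1$ a genuine non-trivial localization around $A$ while simultaneously recording enough $P_k$-consistency conditions, and, exactly as in Lemma \ref{array-for-2}, one must be careful that the pairs extracted from the persistence hypothesis really are $1$-point extensions of $A$. It is the uniformity of $k$ --- hence lowness --- that makes the finitely many conditions recorded at each stage suffice, at the end, to force $P_\infty$-completeness of the paths via Corollary \ref{springboard-2-low}.
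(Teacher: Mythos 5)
Your overall strategy is sound, and it is essentially the $n=2$ case of the paper's Lemma \ref{stable-k}: build an $(\omega,2)$-array by pulling empty pairs out of successively refined localizations, recording $P$-consistency conditions over path-subsets as you go. You do take a different final step: you fix the single $k$ furnished by Corollary \ref{springboard-2-low}, build an $(\omega,2)$-array with respect to $P_k$ only, and then invoke that corollary once to pass to $P_\infty$. The paper's Lemma \ref{stable-k} instead constructs, for every $k$, an array with respect to $P_k$ and then applies compactness (so it never needs the springboard at all). Both routes are legitimate; yours is slightly more economical and makes the role of lowness explicit by fixing $k$ in advance, at the cost of treating the springboard corollary as a black box.

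There is, however, a genuine gap in the inductive step. You require the new pair $c^0_{n+1}, c^1_{n+1}$ to be $1$-point $P_\infty$-extensions of $A$ and assert that this keeps $P^{f_{n+1}}_1$ a localization around $A$. That is exactly right for Lemma \ref{array-for-2}, because there the only new conjuncts are $P_2(y, c^i_{n+1})$, and $P_2(a, c^i_{n+1})$ for $a \in A$ is precisely the statement that $c^i_{n+1}$ $1$-point extends $A$. But in your generalization the new conjuncts are $P_{|\sigma|+1}(y, \check{\sigma})$ for path-subsets $\sigma$ of size up to $k-1$ containing $c^i_{n+1}$, and for the localization to contain $A$ you need $P_{|\sigma|+1}(a, \check{\sigma})$ for each $a \in A$, i.e. the consistency of $\{a\} \cup \sigma' \cup \{c^i_{n+1}\}$ where $\sigma' = \sigma \setminus \{c^i_{n+1}\}$. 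This does not follow from $c^i_{n+1}$ being a $1$-point extension of $A$ together with $\sigma' \cup \{c^i_{n+1}\}$ being consistent; a $3$-wise consistent triple need not be jointly consistent. What is actually required---and what the paper's Lemma \ref{stable-k} explicitly demands---is that each $c^i_{n+1}$ be a consistent $1$-point extension of $A \cup \sigma'$, for every path-subset $\sigma' \subset C_n$ of size at most $k-2$. Producing such a pair from the persistence hypothesis needs the same style of localization argument as in the proof of Fact \ref{one-pt-extns}, now run over the finitely many sets $A' \cup \sigma'$ with $A' \subset A$ finite. Your instinct that the interlocking bookkeeping is the main obstacle was correct, but the concrete failure is in the stated compatibility condition on the new pair, which is too weak once $k>2$.
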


In fact, modulo the proof of Lemma \ref{stable-k} we have shown:

\begin{concl} \label{NIP-for-2}
Suppose that $\vp$ is NIP, $(T, \vp) \mapsto \langle P_n \rangle$ and $A$ is a positive base set. 
Then empty pairs are not persistent around $A$.
\end{concl}

\begin{proof}
By Observation \ref{stable-low} all NIP formulas are low. By Fact \ref{then-vp-has-ip}, $\vp$ NIP implies $\vp_2$ is NIP and therefore low.
By Corollary \ref{low-2} and Remark \ref{omega-2}, the persistence of empty pairs would imply $\vp$ has the independence property, 
contradiction.
\end{proof}

\subsection{NIP: the case of $n$} 

We now build a more general framework, working towards Theorem \ref{stable-is-complete}, which generalizes Conclusions \ref{stable-2}-\ref{NIP-for-2} to the case of arbitrary $n < \omega$: if $T$ is NIP then no $P_n$-empty tuple can be persistent. The basic strategy is as follows. 
If a $P_n$-empty tuple is persistent, Lemma \ref{stable-k} produces an $(\omega, n)$-array.
In this higher-dimensional case, in order to extract the independence property from an $(\omega, n)$-array via Observation \ref{omega-k-ip},
we need the array to have an additional property called sharpness.  
The ``sharpness lemma,'' Lemma \ref{sharpness-lemma}, returns an array of the correct form at the cost of possibly adding finitely many parameters.
Fact \ref{then-vp-has-ip} then pulls this down to the independence property for $\vp$. 

With some care, we are able to get quite strong control on the kind of localization used. When $T$ is stable in addition to NIP,
the argument can be done with a uniform finite bound (as a function of $n$) on the arity of the predicates $P_m$ used in localization.

\begin{defn} \label{omega-n-arrays} \emph{($(\omega,n)$-arrays revisited)}  Assume $n \leq r < \omega$. 
Compare Definition \ref{diagrams-arrays};
here, the possible ambiguity of the amount of consistency will be important. 

\begin{enumerate} 
\item The predicate $P_r$ is $(\omega, n)$ if there is $C = \{ c^t_i: t<n, i<\omega \} \subset P_1$ such that,
for all $c^{t_1}_{i_1}, \dots c^{t_r}_{i_r} \in C$,
\begin{itemize}
\item $r$-tuples from $r$ distinct columns are consistent, i.e.
\[ \bigwedge_{j,k\leq r} i_j \neq i_k ~ \implies P_r(c^{t_1}_{i_1}, \dots c^{t_r}_{i_r}) \]
\item and no column is entirely consistent, i.e. for all $\sigma \subset r$, $|\sigma| = n$, 
\[ \bigwedge_{j,k \in \sigma} i_j = i_k ~ \implies ~\neg P_r(c^{t_1}_{i_1}, \dots c^{t_r}_{i_r}) \]
\end{itemize}
\noindent Any such $C$ is an \emph{$(\omega, n)$-array}. The precise arity of consistency is not specified, see condition \emph{(4)}. 
\item If for all $n \leq r<\omega$, $P_r$ is $(\omega,n)$, say that \emph{$P_\infty$ is $(\omega, n)$}. 
\item A \emph{path} through the $(\omega,n)$ array $C$ is a set $X \subset C$ which contains no more than $n$-$1$ elements from each column. 
\item $P_r$ is \emph{sharply} $(\omega, n)$ if it contains an $(\omega,n)$-array $C$ on which, moreover, for all 
$\{c^{t_1}_{i_1}, \dots c^{t_r}_{i_r} \} \subset C$
\[ P_r(c^{t_1}_{i_1}, \dots c^{t_r}_{i_r}) ~\iff~ \bigwedge_{\sigma \subset r, |\sigma| = n} 
\left( \bigwedge_{j,k \in \sigma} i_j = i_k ~ \implies ~\bigvee_{j \neq k \in \sigma} t_j = t_k \right) \]
i.e., if every path is a $P_r$-complete graph. 
\item $P_\infty$ is sharply $(\omega, n)$ if $P_r$ is sharply $(\omega, n)$ for all $n \leq r < \omega$.
\end{enumerate}
\end{defn}

\begin{rmk} \label{nontrivial-descent}
\begin{enumerate}
\item Every $(\omega, 2)$-array is automatically sharp. 
\item Suppose $P_\infty$ has an $(\omega, n)$-array; this does not necessarily imply that $P_\infty$ has an $(\omega, m)$-array for $m<n$, because
$m$ elements from a single column need not be inconsistent, e.g. if the $(\omega, n)$-array is sharp.  
\end{enumerate}
\end{rmk}

\begin{obs} \label{omega-k-ip}
If $P_\infty$ is sharply $(\omega, k)$ then $\vp_{k-1}$ has the independence property. 
\end{obs}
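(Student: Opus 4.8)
The plan is to read the independence property for $\vp_{k-1}$ directly off a single $(\omega,k)$-array, in the spirit of the case $k=2$ treated in Remark~\ref{omega-2}.

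First I would fix, in the monster model, one array $C = \{\, c^t_i : t < k,\ i<\omega \,\} \subset P_1$ witnessing that $P_r$ is sharply $(\omega,k)$ for every $k \leq r <\omega$ simultaneously (such $C$ exists by compactness, since each individual requirement ``$P_r$ is sharply $(\omega,k)$'' is part of the hypothesis). I will use two features of $C$, both immediate from Definition~\ref{omega-n-arrays}: by the sharp equivalence (4), a finite subset $Y \subseteq C$ is a $P_\infty$-complete graph exactly when it is a \emph{path}, i.e. contains at most $k-1$ of the $k$ elements of each column; and, being an $(\omega,k)$-array, $C$ has the property that every full column is $P_k$-inconsistent, i.e.
\[ \neg\, \exists x \bigwedge_{t<k} \vp(x; c^t_i) \qquad (i<\omega). \]

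Next I would name the witnesses to the independence property of $\vp_{k-1}$: for each $i<\omega$ put $\overline{b}_i := (c^0_i,\dots,c^{k-2}_i)$, a $(k-1)$-tuple of parameters, so that $\vp_{k-1}(x;\overline{b}_i) = \bigwedge_{t=0}^{k-2}\vp(x;c^t_i)$; note this uses rows $0,\dots,k-2$ of column $i$ and deliberately omits the top row $c^{k-1}_i$. Now let $\sigma,\tau\subseteq\omega$ be arbitrary finite disjoint sets, and set
\[ Y \ :=\ \{\, c^t_i : t \leq k-2,\ i \in \sigma \,\}\ \cup\ \{\, c^{k-1}_i : i \in \tau \,\}. \]
Because $\sigma\cap\tau=\emptyset$, the set $Y$ meets each column indexed by $\sigma$ in exactly $k-1$ elements, meets each column indexed by $\tau$ in a single element, and meets no other column; hence $Y$ is a path. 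By the first feature of $C$, $Y$ is therefore a positive base set, so by Observation~\ref{bp}(5) the partial $\vp$-type $\{\,\vp(x;a):a\in Y\,\}$ is consistent; fix a realization $c$.

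Finally I would check that $c$ realizes the required Boolean pattern. If $i\in\sigma$ then $c^0_i,\dots,c^{k-2}_i\in Y$, so $c$ satisfies each conjunct of $\vp_{k-1}(x;\overline{b}_i)$, i.e. $\models\vp_{k-1}(c;\overline{b}_i)$. If $i\in\tau$ then $c^{k-1}_i\in Y$, so $\models\vp(c;c^{k-1}_i)$; were $\vp_{k-1}(c;\overline{b}_i)$ to hold as well, then $c$ would satisfy all $k$ instances $\vp(x;c^0_i),\dots,\vp(x;c^{k-1}_i)$ of column $i$, contradicting its $P_k$-inconsistency; hence $\models\neg\vp_{k-1}(c;\overline{b}_i)$. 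Since $\sigma$ and $\tau$ were arbitrary finite disjoint subsets of $\omega$, the sequence $\langle\overline{b}_i:i<\omega\rangle$ exhibits the independence property of $\vp_{k-1}$ (in particular the $\overline{b}_i$ are automatically pairwise distinct). The one delicate point is exactly this asymmetry: the parameter-tuples $\overline{b}_i$ must drop precisely one row of each column, so that a negative instance is produced by placing the dropped row $c^{k-1}_i$ into $Y$ and invoking the $P_k$-inconsistency of the full column — had we retained all $k$ rows, the negation could not be forced.
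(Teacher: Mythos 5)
Your proof is correct and follows essentially the same strategy as the paper's: pick a path in a single sharp array that includes the parameter rows for columns in $\sigma$ and the complementary row(s) for columns in $\tau$, realize the resulting $\vp$-type, and use $P_k$-inconsistency of the full columns to force $\neg\vp_{k-1}$ on the $\tau$ side. The only (harmless) variation is that for $j\in\tau$ you include a single element $c^{k-1}_j$, whereas the paper includes the $k-1$ elements $a^2_j,\dots,a^k_j$; both yield paths and both block the $\tau$-instances via the same column inconsistency.
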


\begin{proof}
Let $X = \langle a^1_i,\dots a^{k}_i : i<\omega \rangle$ be the array in question; then $\vp_{k-1}$ has the independence property on any
maximal path, e.g. 
$B := \langle a^1_i,\dots a^{k-1}_i : i<\omega \rangle$. To see this, fix any $\sigma, \tau \subset \omega$ finite disjoint; then by the
sharpness hypothesis $\{ a^1_i,\dots a^{k-1}_i : i \in \sigma \} \cup \{ a^2_j,\dots a^{k}_j : j \in \tau \}$ is a $P_\infty$-complete graph and thus 
corresponds to a consistent partial $\vp$-type $q$. But any realization $\alpha$ of $q$ cannot satisfy $\vp(x;a^1_j)$ 
for any $j \in \tau$, because $P_{k}$ does not hold on the columns. A fortiori $\neg \vp_k(\alpha;a^0_j,\dots a^{k-1}_j)$. 
\end{proof}

Let us write down some conventions for describing types in an array.

\begin{defn} \label{array-types} Let $x^t_i, x^s_j$ be elements of some $(\omega, n)$-array $X$.
\begin{enumerate}
\item Let $[x^t_i] = \{ x^s_j \in X : j = i\}$, i.e. the elements in the same column as $x^t_i$. 
\item Let $X_0 = \{ x^{t_1}_{i_1}, \dots x^{t_\ell}_{i_\ell} \} \subset X$ be a finite subset. The
\emph{column count} of $\{ x^{t_1}_{i_1}, \dots x^{t_\ell}_{i_\ell} \}$ is the unique tuple
$(m_1,\dots m_\ell) \in \omega^\ell$ such that:
\begin{itemize}
\item $m_i \geq m_{i+1}$ for each $i \leq \ell$
\item $\Sigma_i ~m_i = \ell$
\item if $Y_0 = \{ y_1,\dots y_r \}$ is a maximal subset of $X_0$
such that $y,z \in Y_0, y \neq z ~ \rightarrow~ y \notin [z]$, then some permutation of 
\[   \left(\left| \vrt [y_1] \cap X_0\right|, \dots , \left| \vrt [y_r] \cap X_0\right|\right) \]
is equal to $(m_1,\dots m_\ell)$. 
\end{itemize} 
\noindent In other words, we count how many elements have been assigned to each column, and put these counts in descending order of size.
Write $\operatorname{col-ct}(\overline{x})$ for this tuple. 
\item  
Let $\leq$ be the lexicographic order on column counts, i.e. $(1, 1, \dots ) < (2,1,\dots)$. This is a discrete linear order, so we can define
$(m_1,\dots m_\ell)^+ $ to be the immediate successor of $(m_1, \dots m_\ell)$ in this order. 
Define $\operatorname{gap}((m_1,\dots m_\ell)) = m_i$ where $((n_1,\dots n_\ell)^+ = (m_1,\dots m_\ell)$ and $\forall j\neq i$ $m_j = n_j$,
i.e. the value which has just incremented.
\end{enumerate}
\end{defn}

By analogy to Lemma \ref{springboard-2} and its corollary,

\begin{lemma} \emph{(Springboard lemma)} \label{springboard-k}
Fix $2 \leq n<\omega$, and let $\langle P_n \rangle$ be the characteristic sequence
of $(T,\vp)$.  
Suppose that the formulas $\vp, \vp_2, \dots \vp_{2n-2}$ are low. 
Then there exist $1 \leq k_0 < \omega$ and a localization $P^f_1$ of $P_1$ in which the following are equivalent:
\begin{enumerate}
\item $P^f_1$ contains a sharp $(\omega, n)$-array for $P_\mu$, where $\mu = (2n-2)k_0$.
\item $P^f_1$ contains a sharp $(\omega, n)$-array for $P_\infty$. 
\end{enumerate}
\end{lemma}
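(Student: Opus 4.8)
\textbf{Proof proposal for Lemma \ref{springboard-k}.}

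The plan is to follow the architecture of Lemma \ref{springboard-2} and Corollary \ref{springboard-2-low}, but with $P_2$-dividing replaced by $P_\mu$-dividing, where $\mu$ is the arity needed to see all the inconsistencies arising in a sharp $(\omega, n)$-array. First I would use the lowness hypothesis on $\vp, \vp_2, \dots, \vp_{2n-2}$: by Definition \ref{defn-low} each of these formulas has a finite arity bound on dividing, so let $k_0 < \omega$ be a uniform bound serving all of them simultaneously. Set $\mu = (2n-2)k_0$. Now I would localize $P_1$ to a non-trivial localization $P^f_1$ on which none of $\vp, \vp_2, \dots, \vp_{2n-2}$ is $k$-dividable for any $k \leq \mu$; this is possible by Observation \ref{dividing} together with the lowness bound $k_0$, exactly as in the proof of Corollary \ref{springboard-2-low}. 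The direction (2) $\Rightarrow$ (1) is immediate since a sharp $(\omega,n)$-array for $P_\infty$ is in particular one for $P_\mu$. So the content is (1) $\Rightarrow$ (2).

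For (1) $\Rightarrow$ (2): start with a sharp $(\omega,n)$-array $Z = \langle c^t_i : t < n, i < \omega \rangle \subset P^f_1$ for $P_\mu$, and by compactness arrange that the sequence of $n$-tuples of columns is $T$-indiscernible. I would then argue that every path $X \subset Z$ (a set with at most $n-1$ elements per column) is automatically a $P_\infty$-complete graph. Suppose not, and let $r$ be minimal such that the $r$-type of some tuple $z^{t_1}_{i_1}, \dots, z^{t_r}_{i_r}$ drawn from a path implies $\neg \exists x \bigwedge_{\ell \leq r} \vp(x; z^{t_\ell}_{i_\ell})$. Using indiscernibility, extract from $Z$ an infinite indiscernible sequence of tuples, each tuple being a single ``block'' realizing that $r$-type, with the blocks drawn from disjoint column-groups (this is where the bound $(2n-2)k_0$ enters: a path-block uses at most $n-1$ elements from each of finitely many columns, and I want successive blocks in the indiscernible sequence to lie in fresh columns so that the only inconsistency across blocks is the one being tracked). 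The conjunction $\psi := \vp_r$ restricted to this block is a sub-conjunction of $\vp_{2n-2}$ up to naming, so applying the indiscernible sequence of blocks yields that some instance of one of $\vp_2, \dots, \vp_{2n-2}$ divides with parameters in $P^f_1$ — contradicting the choice of $P^f_1$. Hence every path is $P_\infty$-complete; combined with the fact that columns remain $P_\mu$-inconsistent (hence $P_m$-inconsistent for all $m \geq \mu$) and that $r$-tuples from distinct columns stay consistent (inherited from the array), this shows $Z$ is a sharp $(\omega, n)$-array for $P_\infty$.

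I expect the main obstacle to be the bookkeeping in the ``block extraction'' step: one must choose the sub-blocks of the path so that (a) each block realizes the bad $r$-type, (b) the blocks are pairwise in disjoint sets of columns so that the only source of $r$-fold (or $2r$-fold) inconsistency across two consecutive blocks is the assumed one, and (c) the total arity of the conjunction witnessing dividing stays $\leq 2n-2$ so that the lowness bound $k_0$ applies and $\mu = (2n-2)k_0$ is the right threshold. Getting the indiscernible sequence of blocks — rather than of single elements — requires invoking compactness and Ramsey on the array $Z$ one more time, and one has to check that the resulting inconsistency is genuinely dividing of a low formula (and not merely of $\vp_r$ for some large $r$), which is exactly why the hypothesis is stated for $\vp, \vp_2, \dots, \vp_{2n-2}$ and not just for $\vp$. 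The rest — the two implications and the role of sharpness in preserving path-completeness — is routine given Observation \ref{dividing} and the definitions.
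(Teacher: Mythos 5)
The central step of your proposal — localizing $P^f_1$ so that none of $\vp, \vp_2, \dots, \vp_{2n-2}$ is $k$-dividable for any $k \leq \mu$, and then deriving a contradiction with that localization — does not go through under the stated hypothesis. Observation \ref{dividing} and Corollary \ref{simple-and-low}, which you cite, both require the formula to be \emph{simple} (i.e.\ not to have the tree property); lowness alone gives a finite bound on the arity of dividing, but gives no control on the number of levels of sequential dividing and hence no way to localize dividing away. Since Lemma \ref{springboard-k} assumes only that $\vp, \vp_2, \dots, \vp_{2n-2}$ are low, the localization you describe need not exist, and the paper explicitly does not attempt it: there the localization is only invoked in the special case where $T$ is stable (hence simple \emph{and} low). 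In the general low case, the contradiction the paper actually reaches is with the \emph{arity bound} $k_0$: if a path of size $m>\mu=(2n-2)k_0$ were inconsistent, one extracts an indiscernible sequence of instances of $\vp_{2n-2}$ that is $m'$-dividable for some $m' > k_0$, contradicting the choice of $k_0$ as a uniform bound on dividing arity coming from lowness. Your closing sentence ``contradicting the choice of $P^f_1$'' is therefore appealing to something you cannot assume.

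There is a secondary issue in the block-extraction step. You propose to choose the blocks in pairwise disjoint sets of columns. But for a sharp $(\omega,n)$-array, any union of $k_0$ column-disjoint $(2n-2)$-blocks is a path of size $\mu$ and is therefore consistent by hypothesis (1); nothing you have said forces the resulting indiscernible sequence of $\vp_{2n-2}$-instances to be inconsistent at all, so you have not yet produced dividing. The paper's construction of $S$ uses \emph{overlapping} consecutive blocks (offset $(n-1)$-tuples from columns $i$ and $i+1$), precisely so that two consecutive instances already cover a full column and are $P_n$-inconsistent, guaranteeing that the sequence actually witnesses dividing; the arity of that dividing is then estimated and compared against $k_0$. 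Your instinct that ``one has to check the resulting inconsistency is genuinely dividing of a low formula'' is exactly right, but the mechanism for ensuring inconsistency (overlapping versus disjoint blocks) and the target of the contradiction (arity bound $k_0$ versus a dividing-free localization) both need to change.
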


\begin{proof}
Assume (1), so let $C = \{ c^t_i : t<n, i<\omega \} \subset P^f_1$ be sharply $(\omega, n)$ for $P_\mu$, chosen without loss of 
generality to be an indiscernible sequence of $n$-tuples.
Fix a path $Y=y_1,\dots y_{m}$ of minimal size $m > n$ such that $\neg P_{m}(y_1,\dots y_m)$. 
Let $S := \{ c^0_i,\dots c^{n-1}_i, c^1_{i+1},\dots c^n_{i+1} : i<\omega \} \subset C^{2n-2}$
be a sequence of pairs of offset $(n-1)$-tuples. 

Note that $S$ is $1$-consistent as we assumed (1).

On the other hand, $C$ is indiscernible, so any increasing sequence of $m$ elements from $S$ will cover all the possible $m$-types from $C$.
Since $Y$ is inconsistent, this implies that $S$ is $m$-inconsistent. These $m$ elements will be distributed over at least $\frac{m}{2n-2}$
instances of $\vp_{2n-2}$; by inductive hypothesis, one fewer element, thus one fewer instance, would be consistent. 
Thus $\vp_{2n-2}$ is sharply $m^\prime$-dividable for  some $m^\prime \geq k_0$.

The appropriate $k_0$ is thus a strict upper bound on the possible
arity of dividing of each of the formulas $\{ \vp_{2\ell-2} : 1 \leq \ell \leq k_0 \}$, which exists by lowness.
When $T$ is low but possibly unstable, determining $k_0$ is the important step; no localization is then necessary.
When $T$ is stable, however, w.l.o.g. $k_0 = 2n-2$ as by Corollary \ref{simple-and-low}
we can simply choose a localization in which the $2n-2$ formulas are not $k$-dividable for any $k$. 
\end{proof}

We next give a lemma which will extract a sharp array from an array.
Recall that $P^{\overline{a}}_\infty$ is the $^*$localized \emph{sequence} from Definition \ref{localization},
i.e. the characteristic sequence of the formula $\vp(x;y)\land \bigwedge_{a \in \overline{a}} \vp(x;a)$.

\begin{lemma} \label{sharpness-lemma} \emph{(Sharpness lemma)}
Let $\overline{a} \subset P_1$ be finite, $n<\omega$. Suppose that $P^{\overline{a}}_\infty$ contains an $(\omega, n)$-array.
Then there exist $\overline{a}^\prime$, $\ell$ with $\overline{a} \subseteq \overline{a}^\prime \subset P_1$ and $2 \leq \ell \leq n$ such that $P^{\overline{a}^\prime}_\infty$ contains a sharp $(\omega, \ell)$-array.
\end{lemma}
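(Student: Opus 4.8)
The plan is to extract a sharp array by an iterative ``de-clumping'' procedure, organized by the column-count ordering of Definition~\ref{array-types}. Start with the given $(\omega,n)$-array $C = \{c^t_i : t<n, i<\omega\} \subset P_1$ witnessing that $P^{\overline a}_\infty$ has an $(\omega,n)$-array, chosen (by compactness and Ramsey) to be an indiscernible sequence of $n$-tuples. The obstruction to sharpness is that some path $Y \subset C$, i.e. some subset taking at most $n{-}1$ elements from each column, may fail to be a $P^{\overline a}_r$-complete graph for some $r$. By indiscernibility, whether such a failure occurs is determined by the column-count $\colct(Y)$ of the offending tuple (Definition~\ref{array-types}(2)), so there is a well-defined minimal (in the lexicographic order on column counts) bad column count, call it $\bar m$, with the property that every path whose column count is $<\bar m$ is $P^{\overline a}_\infty$-complete, but some path $Y_0$ with $\colct(Y_0) = \bar m$ is not.

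First I would isolate the single column where the ``gap'' occurs: by minimality of $\bar m$, the predecessor $(n_1,\dots)$ of $\bar m$ has all its clumps still jointly consistent, and $\bar m$ is obtained by incrementing one coordinate from $\gap(\bar m){-}1$ to $\gap(\bar m)$. Fix a witnessing bad path $Y_0$ and let $\overline a'' = Y_0 \setminus [z]$, where $[z]$ is the (unique, up to indiscernibility) column of $Y_0$ carrying $\gap(\bar m)$-many elements; put $\overline a' = \overline a \cup \overline a''$. Passing to the $^*$localized sequence $P^{\overline a'}_\infty$ throws these finitely many parameters into the base, so the interesting data now lives in the single column: within that column we have $\gap(\bar m)$ elements (out of the original $n$ rows) which, together with $\overline a'$, are inconsistent, while any $\gap(\bar m){-}1$ of them together with $\overline a'$ are consistent. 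Restricting $C$ to the relevant rows and re-indexing, the columns of $C$ now furnish, in the sequence $\langle P^{\overline a'}_n\rangle$, an array in which each column is (sharply, by a further Ramsey/indiscernibility pass on the column) $\ell$-inconsistent but $(\ell{-}1)$-consistent for $\ell = \gap(\bar m) \leq n$, and — by the minimality of $\bar m$, now reinterpreted over the enlarged base — every path (at most $\ell{-}1$ from each column) is $P^{\overline a'}_\infty$-complete. That is precisely a sharp $(\omega,\ell)$-array for $P^{\overline a'}_\infty$, and $\ell \geq 2$ since a single element of a column is consistent with $\overline a'$ (it came from a path) while the full clump is not.

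The main obstacle I anticipate is the bookkeeping that makes the phrase ``by minimality of $\bar m$'' actually yield sharpness over the \emph{new} base $\overline a'$ rather than merely over $\overline a$: one must check that throwing $\overline a''$ into the base does not create \emph{new} inconsistent paths of small column count in the remaining array. This is where indiscernibility of $C$ as a sequence of $n$-tuples is essential — any inconsistency among (remaining columns) $\cup\, \overline a'$ of column count $<\bar m$ would, by indiscernibility, already have been visible as an inconsistency in $C$ of column count $<\bar m$ over $\overline a$, contradicting minimality. A secondary nuisance is ensuring the column-inconsistency is \emph{sharp} (exactly $\ell$-inconsistent, not merely $\leq \ell$-inconsistent after restriction): this is handled by one more application of Ramsey's theorem to the finite colouring of subsets of a single indiscernible column, exactly as ``sharply $m'$-dividable'' is extracted in the proof of Lemma~\ref{springboard-k}. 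Everything else is routine manipulation of the definitions of $^*$localization and of $\colct$.
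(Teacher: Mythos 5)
Your proposal follows the same skeleton as the paper's proof: take $C$ indiscernible, choose a minimal bad path by column count, isolate the offending single-column clump $X_1$ of size $\ell = \gap(\bar m)$, throw the remainder $X_0$ into the base $\overline a'$, and restrict $C$ to the $\ell$ rows of $X_1$ and to suitably distant columns to obtain an $(\omega,\ell)$-array for $P^{\overline a'}_\infty$. Where the two diverge is in the termination. The paper does \emph{not} claim that one step of de-clumping yields a sharp array: after producing the $(\omega,\ell)$-array it writes ``If it is not sharp, repeat the argument,'' and relies on the strict decrease $\ell < n$ together with the fact that $(\omega,2)$-arrays are automatically sharp (Remark~\ref{nontrivial-descent}(1)) to guarantee termination. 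You instead argue that the new array is already sharp, because any path $P$ of the new array (at most $\ell-1$ per column) merges with $X_0$ to give a path in $C$ of column count strictly below $\bar m = \colct(Z)$, hence consistent over $\overline a$ by minimality. That reasoning does appear to go through — the key point, which you gesture at but don't fully write out, is that $\colct(P) \uplus S < (\ell) \uplus S$ whenever every part of $\colct(P)$ is $\leq \ell-1$, which can be checked via the ``first value where the multiplicities differ'' characterization of the lexicographic order on partitions, and that $P \cup (Z \setminus X_1)$ is still a legitimate path with $Y$ a valid auxiliary set. So your argument, if these column-count manipulations are spelled out, actually obtains the lemma in one pass, whereas the paper prefers the more conservative iteration which only needs the weaker condition (IV) ($X_0 \cup W$ consistent for $W$ with at most one per column) and lets the descent in $\ell$ do the rest. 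Both are valid; the paper's version is easier to verify locally, yours is tighter and gives a cleaner picture of which $\ell$ is produced.

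One small imprecision worth fixing in your write-up: when you say ``any inconsistency among (remaining columns) $\cup\,\overline a'$ of column count $<\bar m$ would already have been visible,'' you should be explicit that the relevant object whose column count you compare with $\bar m$ is $P \cup X_0$ as a subset of the \emph{original} array $C$ (not the column count of $P$ inside the sub-array), and that the inequality $\colct(P \cup X_0) < \bar m$ is what the minimality hypothesis is applied to. As stated the phrase is ambiguous between the two counts, and the whole weight of your one-step sharpness claim rests on this comparison being the right one.
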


\begin{proof}
Let us show that, given an $(\omega, n)$-array for $P^{\overline{a}}_\infty$, either

\begin{itemize}
\item there is a \emph{sharp} $(\omega, n)$ array for $P^{\overline{a}}_\infty$, or else 
\item by adding no more than finitely many parameters 
we can construct an $(\omega, \ell)$-array for $P^{\overline{a}^\prime}_\infty$ and some $\ell < n$.
\end{itemize}
 
Note that the second is nontrivial by Remark \ref{nontrivial-descent}. 
As an $(\omega, 2)$-array is automatically sharp, we can then iterate the argument to obtain the lemma. 

We have, then, some $(\omega, n)$-array $C$ in hand.
Without loss of generality $C$ is an indiscernible sequence of $n$-tuples. 
If every path through $C$ is a $P^{\overline{a}}_\infty$-complete graph then $C$ is a sharp $(\omega, n)$-array and we are done.
Otherwise, choose some finite $Z \subset C$ whose column count is as small as possible subject to the conditions:

\begin{enumerate}
\item $Z$ is a path
\item There exists some $Y \subset C$ such that
\begin{itemize}
\item[(i)] $Y \cap [Z] = \emptyset$
\item[(ii)] $y_1, y_2 \in Y \implies [y_1] \cap [y_2] = \emptyset$
\end{itemize}
but $Z \cup Y$ is not a $P^{\overline{a}}_\infty$-complete graph. 
\end{enumerate}

\noindent In other words, $Z$ is a possible new parameter set which is just slightly too large: the subset of $C$ which is consistent with $Z$
fails to be an $(\omega, n)$-array because some set of elements from distinct columns is not consistent relative to $Z$.
Set $X := Z \cup Y$, where $Y$ is the finite sequence from (2). 

The assumption that $C$ is not sharp gives an unspecified finite bound on $|Z|$; 
in fact the springboard lemma gives a more informative
bound $k \geq 2n-2$. On the other hand, by definition of $(\omega, n)$-array, any such $Z$ must contain at least two elements from the same column,
so $|Z| > 1$ and we can find our witness working upwards on column count. 
Because $C$ is an indiscernible sequence of $n$-tuples, we may assume that the elements of $Z$ are in 
columns which are infinitely far apart. 
Finally, if $Z_0 \subsetneq Z$, then $\colct{(Z_0)} < \colct{(Z)}$. 
So for any $W \subset C$ satisfying conditions (2).(i)-(ii) just given, $Z_0 \cup W$ is a $P^{\overline{a}}_\infty$-complete graph.

In particular, we can choose a partition $X = X_0 \cup X_1$ where 

\begin{itemize}
\item[(I)] $X_0 \cap X_1 = \emptyset$, $\emptyset \subsetneq X_1 \subset X$
\item[(II)] $x, x^\prime \in X_1 \implies [x] = [x^\prime]$ 
\item[(III)] $n > \ell := |X_1| = gap(\colct(Z)) > 1$
\item[(IV)] For any $W \subset C$ satisfying conditions (2).(i)-(ii), $X_0 \cup W$ is a $P^{\overline{a}}_\infty$-complete graph.
\end{itemize}

To finish, let $a^\prime = a \cup X_0$ and let $C^\prime \subset C$ be an infinite sequence of $\ell$-tuples which
realize the same type as $X_1$ over $a \cup X_0$. (For instance, restrict $C^\prime$ to the rows containing elements of $X_1$
and to infinitely many columns which do not contain elements of $X_0$.) 
Since $Z$ was chosen to be a path, $\ell < n$ (condition (III)) and $|a^\prime| < |X| < \omega$.
By condition (2), $\neg P^{{\overline{a}^\prime}}_\ell (\overline{c})$
for any column $\overline{c}$ of $C^\prime$. On the other hand, by condition (IV) any subset of $C^\prime$ containing no more than one
element from each column is a $P^{\overline{a}^\prime}_\infty$-complete graph. Thus $C^\prime$ is an $(\omega, \ell)$-array
for $P^{\overline{a}^\prime}_\infty$, as desired. If it is not sharp, repeat the argument.
\end{proof}

\begin{fact} \label{then-vp-has-ip} The following are equivalent for a formula $\vp(x;y)$.
\begin{enumerate}
\item $\vp$ has the independence property. 
\item For some $n<\omega$, $\vp_n$ has the independence property. 
\item For every $n<\omega$, $\vp_n$ has the independence property. 
\item Some $^*$localization $\vp^{\overline{a}}$ has the independence property. 
\end{enumerate}
\end{fact}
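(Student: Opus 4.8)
The plan is to prove the cycle $(1)\Rightarrow(3)\Rightarrow(2)\Rightarrow(1)$ together with $(1)\Leftrightarrow(4)$. The anchor is the classical characterization (\cite{Sh:c} II.4.11) that a formula has the independence property if and only if the number of $\psi$-types over finite sets grows faster than polynomially; equivalently, IP is witnessed by a single indiscernible sequence $\langle b_i : i<\omega\rangle$ on which $\psi$ shatters the finite subsets. I will pass freely between these formulations.

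For $(1)\Rightarrow(3)$: suppose $\vp$ has IP, witnessed by an indiscernible sequence $\langle b_i : i<\omega\rangle$ such that for every finite disjoint $\sigma,\tau$ there is $c_{\sigma,\tau}$ with $\models\vp(c_{\sigma,\tau};b_i)$ iff $i\in\sigma$, for $i$ ranging over $\sigma\cup\tau$. Fix $n$; I want a sequence on which $\vp_n$ shatters finite sets. The natural move is to take blocks: set $\overline{b}_j:=(b_{nj},b_{nj+1},\dots,b_{nj+n-1})$, so $\vp_n(x;\overline{b}_j)=\bigwedge_{i<n}\vp(x;b_{nj+i})$. If $\sigma\subset\omega$ is finite, the shattering witness $c$ for the set $\{\,nj+i : j\in\sigma,\ i<n\,\}$ against the complementary indices satisfies $\vp_n(c;\overline{b}_j)$ exactly for $j\in\sigma$. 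Indiscernibility of the block sequence follows from indiscernibility of the original. Hence $\vp_n$ has IP. The implication $(3)\Rightarrow(2)$ is trivial (take $n=1$, or any $n$). The implication $(2)\Rightarrow(1)$ is the one real content point: if $\vp_n=\bigwedge_{i<n}\vp(x;y_i)$ has IP then so does $\vp$. This is standard — IP is preserved downward under taking conjunctions because an instance of $\vp_n$ is a conjunction of $n$ instances of $\vp$, so a set shattered by $\vp_n$ over parameters $\overline{c}=(\overline{c}_0,\dots)$ gives, by a pigeonhole/Ramsey argument on which coordinate does the shattering, an infinite subsequence on which some single coordinate's instances of $\vp$ already shatter; alternatively one cites that $\mathrm{VC}(\vp_n)<\omega$ whenever $\mathrm{VC}(\vp)<\omega$ (finite VC dimension is closed under Boolean combinations), so $\neg$IP for $\vp$ forces $\neg$IP for $\vp_n$, and contrapose. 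I will give the VC-dimension version since it is cleanest.

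For the equivalence with $(4)$: recall $\vp^{\overline{a}}(x;y)=\vp(x;y)\wedge\bigwedge_{a\in\overline{a}}\vp(x;a)$, so every instance of $\vp^{\overline{a}}$ implies the corresponding instance of $\vp$, and conversely an instance of $\vp$ is an instance of $\vp^{\overline{a}}$ conjoined with the fixed formula $\bigwedge_{a\in\overline{a}}\vp(x;a)$ which does not depend on $y$. For $(4)\Rightarrow(1)$: a set of parameters shattered by $\vp^{\overline{a}}$ is shattered by $\vp$ a fortiori, since $\vp^{\overline{a}}(x;b)\vdash\vp(x;b)$ and the witnesses to shattering for $\vp^{\overline{a}}$ already lie in $\bigwedge_{a\in\overline{a}}\vp(x;a)$, hence are witnesses for $\vp$ as well. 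For $(1)\Rightarrow(4)$, with $\overline{a}$ empty $\vp^{\overline{a}}=\vp$, so this is immediate (the point of $(4)$ is really its use as a hypothesis elsewhere, e.g. in Conclusion \ref{NIP-for-2}, where one needs IP to \emph{descend} through a $^*$localization, which is exactly $(4)\Rightarrow(1)$).

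The main obstacle is the step $(2)\Rightarrow(1)$ (equivalently, that finite VC dimension passes to conjunctions): one must be careful that shattering by $\vp_n$ of an infinite set does not a priori hand back a single coordinate that does the work, so the clean route is to argue by contradiction via VC dimension — if $\mathrm{VC}(\vp)=d<\omega$ then the dual shatter function of $\vp_n$ is bounded by a polynomial (Sauer–Shelah applied coordinatewise, $n$ times), hence $\vp_n$ is NIP — rather than trying to directly extract an IP-witness for $\vp$ from one for $\vp_n$. Everything else is bookkeeping with indiscernible block sequences and the observation that the extra conjuncts in $\vp^{\overline{a}}$ and $\vp_n$ are harmless because they only cut down the witness set, never the parameter space in a way that affects shattering.
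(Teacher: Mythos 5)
Your argument is correct, but it takes a genuinely different route from the paper's. The paper dispatches the whole cycle in one stroke: the formulas $\vp_i$ and $\vp_j$ generate the same space of types (a $\vp$-type over $A$ and a $\vp_n$-type over $A^n$ determine one another, and the cardinalities are linked up to the polynomial distortion $|A^n|=|A|^n$), and IP is precisely a superpolynomial growth condition on $|S_\vp(A)|$ as $|A|\to\infty$ (\cite{Sh:c} II.4); so (1), (2), (3) are all the same counting statement, and (4) $\rightarrow$ (2) because $\vp^{\overline{a}}$ is just $\vp_{k+1}$ with $k$ of its parameters fixed. You instead give a block-indiscernible construction for (1) $\Rightarrow$ (3), invoke closure of finite VC dimension under Boolean (here, positive Boolean) combinations for (2) $\Rightarrow$ (1), and argue (4) $\Rightarrow$ (1) directly by noting shattering witnesses for $\vp^{\overline{a}}$ with $\sigma\neq\emptyset$ already satisfy $\vp_k(x;\overline{a})$ and hence shatter for $\vp$; discarding one point handles the $\sigma=\emptyset$ case. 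Your version is more explicit and localizes the one nontrivial fact (VC dimension closure under Boolean combinations, due essentially to Dudley/Sauer--Shelah) rather than the type-counting characterization; the paper's version is shorter and keeps everything in a single uniform language. One small point worth stating explicitly if you write this up: the shattering argument for (4) $\Rightarrow$ (1) requires noting, as you do implicitly, that the extra conjunct $\vp_k(x;\overline{a})$ constrains the \emph{witness} $x$ but not the \emph{parameter} $y$, so once $x$ satisfies it the shattering pattern over the $b_i$'s transfers verbatim.
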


\begin{proof}
(1) $\rightarrow$ (3) $\rightarrow$ (2) $\rightarrow$ (1) $\rightarrow$ (4) are straightforward: 
use the facts that the formulas $\vp_i$, $\vp_j$ generate the same space of types, 
and that the independence property can be characterized in terms 
of counting types over finite sets (\cite{Sh:c}:II.4). Finally, (4) $\rightarrow$ (2) as
we have simply specified some of the parameters. 
\end{proof}

\begin{lemma} \label{stable-k}
Suppose that for some $n<\omega$, every localization of $P_1$ around some fixed positive base set $A$ contains 
an $n$-tuple on which $P_n$ does not hold. Then $P_\infty$ is $(\omega, n)$, though not necessarily sharply $(\omega, n)$.
\end{lemma}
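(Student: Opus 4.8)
The plan is to run the column-by-column construction of the proof of Lemma~\ref{array-for-2}, but to nail down the cross-column structure as $P_\infty$-complete \emph{directly}, with no springboard step — this is what makes the statement available with no stability, lowness, or finite-cover hypothesis. The point is that a transversal of $m$ columns has only $m$ elements, so the single predicate $P_{m+1}$ already decides whether it is a $P_\infty$-complete graph (every higher $P_r$ on it reduces by reflexivity and monotonicity, Observation~\ref{bp}), and finitely many columns admit only finitely many transversals. The price is that the arities of the predicates appearing in the localizations grow without bound (in contrast to the low and stable refinements discussed above), and that the array produced is $(\omega,n)$ but in general not sharp.

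Concretely, I would build $C=\{c^t_i : t<n,\ i<\omega\}\subseteq P_1$ one column at a time, maintaining after the $m$-th column is placed the invariant: each column consists of $n$ distinct elements (also distinct from the other columns) on which $P_n$ fails, and for \emph{every} transversal $T$ of the columns $0,\dots,m-1$ the set $A\cup T$ is a $P_\infty$-complete graph. For the inductive step, following the device in the proof of Fact~\ref{one-pt-extns}, I would show that the partial type $\Sigma(y_1,\dots,y_n)$ over $A\cup\{c^t_i : t<n,\ i<m\}$ asserting that the $y_j$ are distinct and disjoint from the columns already built, that $\neg P_n(y_1,\dots,y_n)$, and that $A\cup T\cup\{y_j\}$ is a $P_\infty$-complete graph for each $j$ and each transversal $T$ of the first $m$ columns, is consistent. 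If some finite $\Sigma_0\subseteq\Sigma$ were inconsistent, let $\theta(y)$ be the conjunction of $P_1(y)$ with the finitely many conditions $P_r(y,\check S)$ occurring in $\Sigma_0$ (the $y_j$ identified, since those conditions are the same for every $j$); since each such $S$ lies in some $A\cup T$, the invariant makes $A\cup T$ — hence $A$ itself — satisfy them, so $\theta$ is a non-trivial localization of $P_1$ around $A$ (using $\ell$-element subsets of the infinite set $A$ as the required $P_\ell$-complete graphs). The hypothesis then puts an $n$-tuple with $\neg P_n$ inside $\theta(M)$; contracting a repeated such tuple to a $\neg P_{m'}$-tuple of $m'<n$ distinct elements by reflexivity and monotonicity, and re-padding with fresh elements of the infinite set $\theta(M)$, I get $n$ distinct elements of $\theta(M)$ with $\neg P_n$, contradicting the inconsistency of $\Sigma_0$. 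Realizing $\Sigma$ supplies column $m$; the invariant persists because every transversal of columns $0,\dots,m$ is of the form $T\cup\{c^j_m\}$.

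It then remains to read off that $C$ is an $(\omega,n)$-array for $P_\infty$ in the sense of Definition~\ref{omega-n-arrays}. Given $r$ elements from $r$ distinct columns $i_1<\dots<i_r$, extend the elements in columns $i_1,\dots,i_{r-1}$ to a full transversal $T$ of columns $0,\dots,i_r-1$; the invariant at stage $i_r$ makes $A\cup T\cup\{c^j_{i_r}\}$ a $P_\infty$-complete graph, so our $r$ (distinct) elements satisfy $P_r$ by monotonicity. Given $r$ elements that include a whole column, $P_n$ fails on that column, so $\neg P_r$ on the $r$-tuple, again by monotonicity. Hence $P_r$ is $(\omega,n)$ for every $r$. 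Sharpness fails in general because the construction says nothing about sets taking more than one but fewer than $n$ elements from a single column, and by Remark~\ref{nontrivial-descent} such mixed tuples can genuinely be $P_r$-inconsistent.

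The step I expect to be the real work — and the one dictating the shape of the argument — is the consistency of $\Sigma$, and within it the verification that $\theta$ is a non-trivial localization around $A$. This is exactly why one must carry the strong invariant ``$A\cup T$ is $P_\infty$-complete for every transversal $T$'' and grow the array a column at a time: $\theta$ mixes conditions coming from several mutually incompatible transversals, and it is only because each of those, unioned with $A$ alone, is complete that $\theta$ holds on every element of $A$. Degenerate cases (e.g.\ $A$ finite, or $\varphi$ so trivial that $A$ cannot be enlarged to an infinite positive base set) either make the hypothesis vacuous or reduce to the case at hand by first enlarging $A$.
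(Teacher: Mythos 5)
Your construction is recognizably the same column-by-column induction as the paper's, but with a different bookkeeping of what is promised at each stage, and the difference is genuine. The paper fixes $k\geq n$ up front, maintains only $P_k$-compatibility between each new element and the $(k-1)$-element transversal fragments already built (the stage-$(m+1)$ localization adjoins $P_k(y;\check{x})$ over $x\in X_{m+1}$), produces an $(\omega,n)$-array for that single $P_k$, and then closes with compactness across $k$ to reach $P_\infty$. You instead carry the stronger invariant that $A\cup T$ is $P_\infty$-complete for every transversal $T$ so far, realize the full type $\Sigma$ at each stage, and obtain the $P_\infty$-array directly. Since a finite fragment $\Sigma_0$ mentions only finitely many $P_r$-conditions, the localization $\theta$ you extract from it is still finite, so both versions pay for the inductive step in the same coin (saturation, i.e.\ realizing a type compatible with a finite localization); yours dispenses with the outer compactness step and makes very visible that the lemma needs no stability, lowness, or fcp hypothesis.

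There is one genuine gap in the inductive step as written. You require in $\Sigma$ that the $y_j$ be disjoint from the columns already built, but the contradiction argument never produces such a tuple. The hypothesis gives an $n$-tuple with $\neg P_n$ in $\theta(M)$; contracting its repeats (reflexivity, Observation \ref{bp}(1)) to a $\neg P_{m'}$-tuple of $m'$ distinct elements and re-padding with fresh elements of $\theta(M)$ (monotonicity, Observation \ref{bp}(3)) yields $n$ \emph{distinct} elements of $\theta(M)$ with $\neg P_n$, but the $m'$ surviving elements of the original tuple may well coincide with previously placed $c^t_i$'s, and you cannot \emph{replace} an element of a $\neg P_n$-tuple while preserving $\neg P_n$. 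The clean repair is to drop disjointness from $\Sigma$ entirely: Definition \ref{omega-n-arrays} treats $C=\{c^t_i : t<n, i<\omega\}$ as an indexed family, not as a set of distinct points. A coincidence across columns only collapses an $r$-transversal to a shorter tuple, where $P_r$ still follows from the invariant by reflexivity and monotonicity; a coincidence within a column only strengthens the required $\neg P_n$ there. With disjointness removed, your consistency argument goes through as written and the final readoff is unaffected. You should also state explicitly that the non-triviality of $\theta$ as a localization (the existence of $P_\ell$-complete graphs $C_\ell\subset\theta(M)$ in the sense of Definition \ref{localization}) uses $A\subset\theta(M)$, which is where the invariant earns its keep; the remark about degenerate $A$ at the end of your proposal belongs inside that verification rather than as an afterthought.
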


\begin{proof}
Let us show that $P_k$ is $(\omega, n)$ for any $k \geq n$. This suffices as, by Convention \ref{localization-depends-on-T}, we may apply compactness.

Fix $k \geq n$ and let $P^f_1$ be any localization, for instance that of Lemma \ref{springboard-k}. 

At stage $0$, let $c^0_0, c^1_0, \dots c^{n-1}_0 \subset P^{f_0}_1 := P^f_1$ be an $n$-tuple of elements on which $P_n$
does not hold, chosen by Fact \ref{one-pt-extns} so that each $c^i_0$ is a consistent $1$-point extension [in the sense of $P_n$] of $A$.
Let $X_0 = \{ \{c^i_0 \} : i \leq n \}$ be the set of these singletons. Write $\check{x}$ to denote the elements of $x$.
Define
\[ P^{f_{1}}_1(y) = P^{f_0}_1(y) \land \bigwedge_{x \in X_0} P_2(y;\check{x}) \]
which includes $A$ by construction. 

At stage $m+1$, write $C_m$ for $\{ c^t_i : t<n, i\leq m\}$ and consider 
the localized set of elements $P^{f_{m}}_1 \in \loc^{C_{m}}_1(A)$. Let
\[ X_m := \{ x \subset C_m : ~|x| = k-1 ~\mbox{and for all i $<$ m},~ \left| x \cap \left(C_{i+1} \setminus C_i\right) \right| \leq 1 \} \]
i.e. sets which choose no more than one element from each stage in the construction. 

By hypothesis, there are $c^0_{m+1}, \dots c^{n-1}_{m+1} \in P^{f_{m}}_1$
such that $\neg P_n(c^0_{m+1}, \dots c^{n-1}_{m+1})$ and such that for all $x \in X_m$, 
each $c^i_{m+1}$ is a consistent 1-point extension of $A \cup x$, in the sense of $P_n$. 
Let $C_{m+1} = C_m \cup \{c^0_{m+1}, \dots c^{n-1}_{m+1} \}$, and let $X_{m+1}$ be the
sets from $C_{m+1}$ which choose no more than one element from each stage in the construction. 
We now define $P^{f_{m+1}}_1 \in \loc^{C_{m+1}}_1(A)$ by
\[  P^{f_{m+1}}_1(y) = P^{f_m}_1(y) \land \bigwedge_{x \in X_{m+1}} P_k(y;\check{x}) \]
(If $m<k$, the parameters from $\check{x}$ need not necessarily be distinct.) Again, this localization
contains $A$ by construction. 
Thus we construct an $(\omega,n)$-array for $P_k$, as desired. As $k$ was arbitrary, we finish.
\end{proof}

Recall that a $P_n$-empty tuple is any $T_0$-configuration for which $X = n$, $\{1,\dots n\} \notin E_x$,
i.e. $y_1,\dots y_n \in P_1$ such that $\neg P_n(y_1,\dots y_n)$.
We are now in a position to prove:

\begin{theorem} \label{stable-is-complete}
Suppose that $\vp$ is NIP, $(T, \vp) \mapsto \langle P_n \rangle$ and $A$ is a positive base set for $\vp$.
Then for each $n<\omega$, $P_n$-empty tuples are not persistent around $A$.
\end{theorem}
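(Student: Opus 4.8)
The plan is to argue by contraposition along the chain of reductions sketched at the start of this section: assuming $\vp$ NIP, I suppose for contradiction that for some $n<\omega$ the $P_n$-empty tuple is persistent around $A$, and derive that $\vp$ has the independence property. So fix such an $n$.

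By Definition \ref{persistence}, persistence of the $P_n$-empty tuple around $A$ says precisely that every localization of $P_1$ around $A$ contains an $n$-tuple on which $P_n$ fails, which is exactly the hypothesis of Lemma \ref{stable-k}. Applying that lemma, $P_\infty$ is $(\omega, n)$, i.e.\ $P_\infty$ contains an $(\omega,n)$-array, though not necessarily a sharp one. (If tight control on the arity of the predicates used in the localizations is wanted --- e.g.\ in the stable subcase --- one should first pass to the localization supplied by the Springboard Lemma \ref{springboard-k}, available since $\vp$ NIP makes each $\vp_i$ NIP by Fact \ref{then-vp-has-ip} hence low by Observation \ref{stable-low}; this refinement is not needed for the bare persistence statement.)

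Next, regard $P_\infty$ as the $^*$localized sequence $P^{\overline{a}}_\infty$ with $\overline{a}=\emptyset$ and feed the $(\omega,n)$-array into the Sharpness Lemma \ref{sharpness-lemma}. This yields a finite $\overline{a}' \subset P_1$ and $\ell$ with $2 \le \ell \le n$ such that $P^{\overline{a}'}_\infty$ --- the characteristic sequence of the $^*$localized formula $\vp^{\overline{a}'}(x;y) := \vp(x;y) \land \bigwedge_{a\in\overline{a}'}\vp(x;a)$ --- is sharply $(\omega,\ell)$. Applying Observation \ref{omega-k-ip} to this sequence (legitimate since $\ell \ge 2$), the formula $(\vp^{\overline{a}'})_{\ell-1}$ has the independence property. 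Now Fact \ref{then-vp-has-ip} finishes it: clause (2)$\Rightarrow$(1) applied to $\vp^{\overline{a}'}$ gives that $\vp^{\overline{a}'}$ has the independence property, and then clause (4)$\Rightarrow$(1) applied to $\vp$ gives that $\vp$ has the independence property --- contradicting the NIP hypothesis.

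The real content lives entirely in the two auxiliary lemmas I am taking as given, and that is where the obstacles sit. In Lemma \ref{stable-k} the delicate point is defining each successive localization $P^{f_{m+1}}_1$ so that every newly chosen array element is simultaneously a consistent one-point extension (in the sense of $P_k$) over $A$ and over every ``transversal'' of the partial array built so far; this is what forces the resulting array to realize the full $(\omega,n)$-consistency pattern. In the Sharpness Lemma the delicate point is the descent on $\colct$: when sharpness fails one isolates a minimal ``slightly too large'' parameter set $Z$, splits it to peel off a block of size $\ell<n$, absorbs the remainder into the base, and thereby trades the $(\omega,n)$-array for a strictly lower one, which is then iterated down to a sharp array. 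Given those, assembling the theorem requires only care that the Sharpness Lemma and Observation \ref{omega-k-ip} are invoked for the characteristic sequence of $\vp^{\overline{a}'}$ rather than of $\vp$, and that Fact \ref{then-vp-has-ip} is what transports the independence property back down to $\vp$.
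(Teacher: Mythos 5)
Your proposal is correct and follows essentially the same chain as the paper's proof: persistence of a $P_n$-empty tuple gives the hypothesis of Lemma \ref{stable-k}, which yields an $(\omega,n)$-array for $P_\infty$; the Sharpness Lemma trades this for a sharp $(\omega,\ell)$-array for some $^*$localized sequence $P^{\overline{a}'}_\infty$; Observation \ref{omega-k-ip} then gives the independence property for $(\vp^{\overline{a}'})_{\ell-1}$; and the two clauses of Fact \ref{then-vp-has-ip} transport this back to $\vp$, contradicting NIP. Your parenthetical observation that passing through the Springboard Lemma's localization is a refinement for uniformity of control rather than a logical necessity for this bare statement is accurate and matches the paper's own framing.
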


\begin{proof}
By lowness, we work inside the localization $P^f_1 \supset A$ given by the Springboard Lemma \ref{springboard-k}. 
Suppose that for some $n<\omega$, $P_n$-empty tuples are persistent. Apply Lemma \ref{stable-k}
to obtain an $(\omega, n)$-array for $P_\infty$, which is not necessarily sharp. 
The Sharpness Lemma \ref{sharpness-lemma} then gives a sharp $(\omega, \ell)$-array for $P^{\overline{a}}_\infty$,
where $\overline{a} \subset P_1$ is a finite set of parameters. The sequence $\langle P^{\overline{a}}_n \rangle$
is just the characteristic sequence of the $^*$localized formula $\vp^{\overline{a}}$, that is, $\vp(x;y) \land \vp_m(x;\overline{a})$, where $m = |\overline{a}|$.
By Observation \ref{omega-k-ip} this means $\vp^{\overline{a}}_{\ell-1}$ has the independence property. 
Now by (2) $\rightarrow$ (1) of Fact \ref{then-vp-has-ip} applied to $\vp^{\overline{a}}$, $\vp^{\overline{a}}$ has the independence property.
By (4) $\rightarrow$ (1) of the same fact, $\vp$ must also have the independence property, contradiction. 
\end{proof}

\begin{cor}
Suppose that $\vp$ is $NIP$, $(T, \vp) \mapsto \langle P_n \rangle$ and $A$ is a positive base set for $\vp$. 
Then for each $n <\omega$, there is a localization $P^f_1$ such that:
\begin{enumerate}
\item $A \subset P^f_1$
\item $\{ y_1,\dots y_n \} \subset P^f_1 \implies P_n(y_1,\dots y_n)$
\end{enumerate}
\end{cor}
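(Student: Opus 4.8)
The plan is to read this corollary off directly from Theorem \ref{stable-is-complete} by unwinding the definition of persistence. First I would recall from Definition \ref{persistence} (together with the description of the $P_n$-empty tuple just before Theorem \ref{stable-is-complete}, namely the $T_0$-configuration with $V_X = n$ and $\{1,\dots,n\} \notin E_X$) that to say $P_n$-empty tuples are \emph{not} persistent around $A$ means precisely: there exist a finite $B \subset M$ and a localization $P^f_1 \in \loc^B_1(A)$ such that $P^f_1$ contains no witnesses for that configuration. Since $\vp$ is NIP, Theorem \ref{stable-is-complete} supplies exactly this non-persistence, for every $n < \omega$.

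The second step is to observe that this $P^f_1$ is already the localization demanded by the corollary. Condition (1), $A \subseteq P^f_1$, is built into the notion of a localization around $A$ in Definition \ref{localization}: in any model of $T_1$ containing $A$ and $B$, $P^f_1$ holds on all $1$-tuples from $A$, so $A \subseteq P^f_1$. Condition (2) is merely a restatement of ``no witnesses'': a witness to a $P_n$-empty tuple drawn from $P^f_1$ would be an $n$-tuple $y_1,\dots,y_n \in P^f_1$ with $\neg P_n(y_1,\dots,y_n)$, so the absence of such a tuple is exactly the implication $\{y_1,\dots,y_n\} \subseteq P^f_1 \implies P_n(y_1,\dots,y_n)$.

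The only point needing (minimal) care is the ambient model: by Convention \ref{localization-depends-on-T} and the observation following Definition \ref{persistence} that persistence is a property of $T$, what is being asserted is the existence of such a localization in some sufficiently saturated model of $T_1$ containing $A$, and this is what Theorem \ref{stable-is-complete} already guarantees. Accordingly I expect no real obstacle here: the corollary is a definitional repackaging of Theorem \ref{stable-is-complete}, and all the genuine work — the Springboard Lemma \ref{springboard-k}, the Sharpness Lemma \ref{sharpness-lemma}, Lemma \ref{stable-k}, and Observation \ref{omega-k-ip} feeding into Fact \ref{then-vp-has-ip} — has been done in proving that theorem.
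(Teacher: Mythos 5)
Your reading is correct, and it is essentially the same (definitional) argument the paper relies on: the corollary is stated with no separate proof, being an immediate unwinding of Theorem \ref{stable-is-complete} together with the definitions of localization around $A$ and of persistence. You correctly identify that non-persistence of $P_n$-empty tuples gives a localization $P^f_1 \in \loc^B_1(A)$ with no $n$-tuple from $P^f_1$ satisfying $\neg P_n$, that condition (1) is built into $\loc^B_1(A)$ (``$P^f_n$ holds on all $n$-tuples from $A$'', here with $n=1$), and that condition (2) is exactly the absence of witnesses. No gap.
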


In other words, when $\vp$ is $NIP$, given a positive base set $A$ there is, for each $n$, a definable restriction of $P_1$ containing
$A$ on which $P_n$ is a complete graph.
In the other direction, if $\vp$ has the independence property then $P_\infty$ is $(\omega, 2)$ by Claim \ref{ip-in-cs}, so in particular
$P_1$ is not a complete graph. In fact: 

\begin{theorem} \label{char-stable} Let $\vp$ be a formula of $T$ and $\langle P_n : n<\omega \rangle$ its characteristic sequence. 

Then the following are equivalent for any positive base set $A$:
\begin{enumerate}
\item There exists a localization $\vp^f$ of $\vp$ such that $\vp^f$ is NIP and $P^f_1 \supset A$.
\item There exists a localization $\vp^g$ of $\vp$ such that $\vp^g$ is stable and $P^g_1 \supset A$.  
\item For every $n<\omega$, there exists a localization $P^{f_n}_1 \supset A$ which is a $P_n$-complete graph.   
\end{enumerate}
\end{theorem}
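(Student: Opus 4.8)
The plan is to establish $(2)\Rightarrow(1)\Rightarrow(3)\Rightarrow(2)$; the implication $(2)\Rightarrow(1)$ is immediate, since stable formulas are NIP.

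For $(1)\Rightarrow(3)$ I would pass to the characteristic sequence $\langle Q_n\rangle$ of the NIP localization $\vp^f$. Unwinding the definitions, $Q_n(y_1,\dots,y_n)=\bigwedge_{i\le n}P^f_1(y_i)\wedge P_n(y_1,\dots,y_n)$, so $Q_n$ agrees with $P_n$ on tuples drawn from $P^f_1$, and $A$ is a positive base set for $\langle Q_n\rangle$. Since $\vp^f$ is NIP, the corollary following Theorem \ref{stable-is-complete}, applied to $\vp^f$ and $A$, gives for each $n$ a localization of $Q_1$ around $A$ that is a $Q_n$-complete graph. Such a localization is exactly a localization of $P_1$ around $A$ refining $P^f_1$, and on it $Q_n$-completeness coincides with $P_n$-completeness; this is the desired $P^{f_n}_1\supseteq A$.

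The real content is $(3)\Rightarrow(2)$, and the first observation is that (3) says precisely that for every $n$ the $P_n$-empty $n$-tuple is not persistent around $A$ (the localization witnessing (3) for $n$ contains no $P_n$-empty $n$-tuple at all). I would then argue in two stages. \emph{Stage one} reduces to the simple, low case: if $\vp$ had the tree property in every localization of $P_1$ around $A$, then by Observation \ref{dividing} together with compactness there would be a single $k$ for which $\vp$ is $k$-dividable in every such localization, and any witnessing $k$-dividing sequence produces, inside that localization, an infinite $1$-consistent $k$-inconsistent family of $\vp$-instances, hence (any $k$ of its parameters) a $P_k$-empty $k$-tuple; so $P_k$-empty $k$-tuples would be persistent around $A$, contradicting (3) with $n=k$. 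Hence some localization $P^{f_0}_1\supseteq A$ has $\vp^{f_0}$ without the tree property, and, shrinking further by Observation \ref{dividing} and Corollary \ref{simple-and-low}, one in which $\vp^{f_0},\vp^{f_0}_2,\dots$ are not $k$-dividable over $A$ for any fixed finite $k$. \emph{Stage two} removes the remaining instability: if $\vp$ were unstable in every localization refining $P^{f_0}_1$ — equivalently had the independence property or the order property there — then feeding this through the machinery of the previous subsections (Claim \ref{diagrams-instability} and the resulting $(\omega,2)$-diagrams for the associated $\theta$-sequence in the order-property case; Lemmas \ref{stable-k} and \ref{sharpness-lemma} with Observation \ref{omega-k-ip} in the independence-property case; together with Fact \ref{then-vp-has-ip}) one again manufactures a persistent $P_n$-empty configuration around $A$, contradicting (3). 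So there is a further localization $P^g_1\supseteq A$ on which $\vp^g$ has neither the independence property nor the order property, i.e.\ is stable; and since each shrinking keeps producing complete $P_\ell$-graphs by (3), the conjunction of the finitely many localization conditions used is a genuine (non-trivial) localization in $\loc^B_1(A)$.

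The main obstacle is Stage two: passing instability of $\vp$ itself (rather than of the predicates $P_n$) cleanly into a persistent empty configuration, and combining the successive shrinkings so that the final localization yields \emph{full} stability of $\vp^g$ rather than the merely bounded-complexity conclusion of Conclusion \ref{ps-stable} — which is exactly why the reduction to the simple, low case in Stage one, supplying the uniform dividing bound, is needed.
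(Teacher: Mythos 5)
Your (2)$\Rightarrow$(1) and (1)$\Rightarrow$(3) match the paper. The issue is (3)$\Rightarrow$(2), where you build a two-stage reduction that is far heavier than what is needed, and the heaviness masks a gap.

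The paper's argument for (3)$\Rightarrow$(2) is essentially one line: take the localization $P^{f_2}_1\supset A$ from (3) with $n=2$, which is a $P_2$-complete graph, and set $g=f_2$. If $\vp^{g}$ had the order property, then by Claim \ref{diagrams-instability} (direction $(2)\Rightarrow(1)$ there) the characteristic sequence of $\vp^{g}$ would contain an $(\omega,2)$-diagram, hence in particular a $P_2$-empty pair inside $P^{g}_1$ — contradicting $P_2$-completeness. So $\vp^{g}$ is stable. No reduction to the simple or low case, no array machinery, no sharpness, is needed; the whole point of Claim \ref{diagrams-instability} is that instability \emph{per se} produces the $(\omega,2)$-diagram and hence the empty pair. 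Your Stage two actually contains this idea (you mention Claim \ref{diagrams-instability} and $(\omega,2)$-diagrams in the ``order-property case''), but you wrap it in a spurious case split between order property and independence property. Stability fails if and only if the formula has the order property, so the ``independence-property case'' adds nothing, and Lemmas \ref{stable-k}, \ref{sharpness-lemma}, Observation \ref{omega-k-ip}, and Fact \ref{then-vp-has-ip} are not needed here.

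Your Stage one is both unnecessary and has a real gap: from the hypothesis that $\vp$ has the tree property in every localization you claim ``by Observation \ref{dividing} together with compactness'' to extract a single $k$ for which $\vp$ is $k$-dividable in every localization. Observation \ref{dividing} concerns formulas \emph{without} the tree property, so it does not supply a bound here, and there is no obvious compactness argument forcing a uniform dividing arity across all localizations (the arity of the tree property can in principle vary with the localization). Since Stage one is not needed at all, the theorem survives — but as written your proof does not. Drop Stage one and the case split, and your Stage-two idea collapses to the paper's one-step argument.
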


\begin{proof} Note that $\vp^f =\vp$ and $\vp^g=\vp$ are possible.

(2)$\rightarrow$(1) because stable implies NIP.

(1)$\rightarrow$(3) is just Theorem \ref{stable-is-complete}: no $P_n$-empty tuple is persistent, so eventually one obtains a localization
which is a complete graph.

(3)$\rightarrow$(2) By Claim \ref{diagrams-instability}, if $\vp^g$ has the order property its
associated $P^g_1$ contains a diagram in the sense of Definition \ref{diagrams-arrays}. Thus it contains an empty pair,
and so a fortiori a $P_n$-empty tuple, for each $n$.
\end{proof}

\begin{expl} \label{nip-q-remark}
Consider $(\mathbb{Q}, <)$,
let $\vp(x;y,z) = y > x > z$ and let the positive base set $A$ be given by concentric intervals
$\{ (a_i, b_i) : i <\kappa \} \subset P_1$. Then there is indeed a $P_2$-empty pair $(c_1,c_2), (d_1,d_2)$ 
which are each consistent 1-point extensions of $A$ -- namely, any pair of disjoint intervals lying in the cut
described by the type corresponding to $A$. Localizing to require consistency with any such pair amounts to giving 
a definable complete graph containing $A$, i.e. realizing the type. 
\end{expl}

\subsection{Simplicity}

We have seen that the natural first question for persistence, whether there exist persistent empty tuples,
characterizes stability: Theorem \ref{char-stable}. Here we will show that a natural next question, whether 
there exist persistent infinite empty graphs, characterizes simplicity. Recall that a formula is simple
if it does not have the tree property; see \cite{kim-pillay}, \cite{wagner}. 

Notice that we have an immediate proof of this fact by Observation \ref{dividing}, which appealed to finite $D(\vp, k)$-rank
for simple formulas to conclude that infinite empty graphs are not persistent. 
Let us sketch the framework for a different proof by analogy with the previous section. 
This amounts to deriving Observation \ref{dividing} directly in the characteristic sequence. 

\begin{rmk}
In the case of stability, much of the work came in establishing sharpness of the $(\omega, \ell)$-array.
Here, since the persistent configuration is infinite, we have compactness on our side; we may in fact always choose
the persistent empty graphs to be indiscernible and uniformly $k$-consistent but $(k+1)$-inconsistent, for some 
given $k<\omega$. 
\end{rmk}

\begin{obs} \label{trees-in-P1}
Suppose that $(T, \vp) \mapsto \langle P_n \rangle$. Then the following are equivalent:
\begin{enumerate}
\item there is a set $T = \{ a_\eta : \eta \in 2^{<\omega} \} \subset P_1$
such that, writing $\subseteq$ for initial segment:
\begin{enumerate}
\item For each $\nu \in 2^\omega$, $\{ a_\eta : \eta \subset \nu \}$ is a complete $P_\infty$-graph.
\item For some $k<\omega$, and for all $\rho \in \omega^{<\omega}$, the set
$\{  a_{\rho^\smallfrown i} : i<\omega \} \subset P_1$ is a $P_{k}$-empty graph. 
\end{enumerate}
\item $\vp$ has the $k$-tree property.
\end{enumerate}
\end{obs}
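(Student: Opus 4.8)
The plan is to establish the two implications separately, treating this as essentially a translation exercise between the language of $\vp$-trees (Definition \ref{TP1-TP2}) and the characteristic sequence. For the direction $(2) \Rightarrow (1)$: if $\vp$ has the $k$-tree property, fix a witnessing $\vp$-$k$-tree $\{\vp(x;a_\eta) : \eta \in \omega^{<\omega}\}$. First I would thin the tree down to a $2^{<\omega}$-indexed subtree: pass to a subset of successors at each node, which preserves both $k$-inconsistency of immediate successors (any infinite subset of a $k$-inconsistent set is $k$-inconsistent — we can even re-extract a $\vp$-$k$-tree indexed by $\omega^{<\omega}$ and then restrict to $2^{<\omega}$, or just keep the $\omega$-branching at each level and note that the condition in $(1)(b)$ asks only for some $k$ working for all $\rho$). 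The parameters $\{a_\eta : \eta \in 2^{<\omega}\}$ then sit in $P_1$ since each $\vp(x;a_\eta)$ is consistent, so $\exists x\, \vp(x;a_\eta)$, i.e. $P_1(a_\eta)$. Condition $(1)(a)$ is immediate from clause $(1)$ of the $k$-tree property together with Observation \ref{bp}(5): a branch $\{a_\eta : \eta \subseteq \nu\}$ indexes a consistent partial $\vp$-type, hence $A^n \subseteq P_n$ for all $n$, i.e. it is a complete $P_\infty$-graph. Condition $(1)(b)$ is exactly clause $(2)$ of the $k$-tree property restated via the definition of $P_k$: $\{\vp(x;a_{\rho^\smallfrown i}) : i<\omega\}$ being $k$-inconsistent says precisely that no $k$-element subset of $\{a_{\rho^\smallfrown i}\}$ lies in $P_k$, i.e. it is a $P_k$-empty graph.

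For the direction $(1) \Rightarrow (2)$: given the tree $T = \{a_\eta : \eta \in 2^{<\omega}\} \subseteq P_1$ with properties $(a)$ and $(b)$, I would simply set the formula-instances to be $\vp(x;a_\eta)$ and check that they form a $\vp$-$k$-tree in the sense of Definition \ref{TP1-TP2}. Clause $(1)$ (paths consistent): property $(a)$ says each branch is a complete $P_\infty$-graph, which by Observation \ref{bp}(5) is exactly the statement that $\{\vp(x;a_\eta) : \eta \subseteq \nu\}$ is a consistent partial $\vp$-type. Clause $(2)$ ($k$-inconsistency of immediate successors): property $(b)$ says $\{a_{\rho^\smallfrown i} : i<\omega\}$ is $P_k$-empty, i.e. $P_k$ fails on every $k$-element subtuple, which by definition of $P_k$ means $\{\vp(x;a_{\rho^\smallfrown i}) : i<\omega\}$ is $k$-inconsistent. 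Strictly speaking the tree property as defined in \ref{TP1-TP2} uses an $\omega^{<\omega}$-indexed tree rather than a $2^{<\omega}$-indexed one, so there is one small point: from a binary $\vp$-$k$-tree one recovers an $\omega$-branching $\vp$-$k$-tree by compactness (embed $\omega^{<\omega}$ cofinally into $2^{<\omega}$ via a tree-embedding, or use indiscernibility to stretch the branching), and the $k$-inconsistency is inherited. This is routine and I would dispatch it in a sentence.

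The only genuine care needed is bookkeeping about the branching factor and the uniformity of $k$: in $(1)(b)$ the constant $k$ is required to be a single number working simultaneously for all $\rho \in 2^{<\omega}$ (the statement is ``for some $k$, for all $\rho$''), which matches the definition of the $k$-tree property, so no compactness argument over varying $k$ is needed. I expect the main (very mild) obstacle to be making the passage between $2^{<\omega}$-branching and $\omega^{<\omega}$-branching clean, so that both directions literally produce objects matching Definition \ref{TP1-TP2}; everything else is unwinding definitions through $P_1$ and Observation \ref{bp}(5). Accordingly the proof will be short, with the bulk of the text devoted to spelling out the $P_\infty$-graph $\leftrightarrow$ consistent partial type and $P_k$-empty graph $\leftrightarrow$ $k$-inconsistent correspondences.
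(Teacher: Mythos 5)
Your proof is correct and follows the same route as the paper, which simply notes that the observation is a direct translation of Definition \ref{TP1-TP2} via Observation \ref{bp}(5) (branches are complete $P_\infty$-graphs iff they index consistent partial $\vp$-types) and the definition of $P_k$ ($P_k$-empty iff $k$-inconsistent). One small caution about the branching-factor worry you raise: the $2^{<\omega}$ in the statement is a typo for $\omega^{<\omega}$ (the indices $\rho^\smallfrown i$ for $i<\omega$ in (1)(b) only make sense for $\omega$-branching), so the correct resolution is simply to read $\omega^{<\omega}$ throughout; your alternative of genuinely thinning the tree to $2^{<\omega}$ would trivialize (1)(b) for $k>2$, since a two-element set of nodes in $P_1$ is then vacuously $P_k$-empty and carries no $k$-inconsistency information.
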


\begin{proof}
This is a direct translation of Definition \ref{TP1-TP2}. 
\end{proof}

\begin{lemma} \label{simple-trees}
Let $X_k$ be the $T_0$-configuration describing a strict $(k+1)$-inconsistent sequence, i.e. $V_{X_k} = \omega$
and $E_{X_k} = \{ \sigma : \sigma \subset \omega, |\sigma| \leq k \}$. 
Suppose that for some fixed $k<\omega$ and some formula $\vp$, $X_k$ is persistent in the characteristic sequence 
$\langle P_n \rangle$ of $\vp$. Then $\vp$ is not simple. 
\end{lemma}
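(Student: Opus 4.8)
The plan is to build a $\vp$-$k$-tree directly by a dynamic argument, mirroring the structure of Lemma~\ref{stable-k} and the earlier localization arguments. Assume $X_k$ is persistent around $\emptyset$ (by Fact~\ref{one-pt-extns} persistence around $\emptyset$ is the relevant case, and it implies persistence around any positive base set; here we want the contrapositive, so it suffices to treat $A = \emptyset$). We will recursively construct parameters $\{ a_\eta : \eta \in \omega^{<\omega} \} \subset P_1$ together with a nested sequence of localizations $P^{f_\eta}_1$, one attached to each node, so that: along every branch the chosen parameters form a complete $P_\infty$-graph, and the immediate successors of any node form a $P_k$-empty graph. By Observation~\ref{trees-in-P1} this exhibits the $k$-tree property for $\vp$, hence $\vp$ is not simple.

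The construction proceeds by recursion on the tree, processing nodes in a fixed enumeration consistent with the tree order. At the root, persistence of $X_k$ in the trivial localization $P_1$ itself gives an infinite $P_k$-empty graph $\{ a_{\langle i \rangle} : i < \omega \}$; we may take it indiscernible and uniformly $k$-inconsistent by compactness (as noted in the Remark preceding the statement). Given $a_\eta$ already defined, together with the finite set $E_\eta = \{ a_\nu : \nu \subseteq \eta \}$ of parameters used along the branch to $\eta$, we localize: $P^{f_\eta}_1(y) := P^{f_{\eta^-}}_1(y) \land \bigwedge_{j \leq |E_\eta|} P_{j}(y, \check{E_\eta}_j)$, i.e. the conjunction of conditions asserting that $y$ is a consistent one-point $P_\infty$-extension of the branch $E_\eta$. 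This is a nontrivial localization precisely because $E_\eta$ is a complete $P_\infty$-graph (inductively maintained), so $E_\eta$ itself witnesses the required complete graph in the localization. Now persistence of $X_k$ inside $P^{f_\eta}_1$ yields an infinite $P_k$-empty graph $\{ a_{\eta^\smallfrown i} : i < \omega \}$ inside $P^{f_\eta}_1$; each such $a_{\eta^\smallfrown i}$, lying in $P^{f_\eta}_1$, is by construction a consistent one-point $P_\infty$-extension of $E_\eta$, so $E_\eta \cup \{ a_{\eta^\smallfrown i} \}$ is again a complete $P_\infty$-graph, and the branch invariant is preserved at the successors.

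Once the whole tree is built, we verify the two conditions of Observation~\ref{trees-in-P1}(1). Condition (b) is immediate: the immediate successors $\{ a_{\eta^\smallfrown i} : i < \omega \}$ of any node $\eta$ were chosen to be a $P_k$-empty graph. For condition (a), fix a branch $\nu \in 2^\omega$ (or $\omega^\omega$; the translation to $2^{<\omega}$ is cosmetic, as in Observation~\ref{trees-in-P1}) and any finite $\{ a_{\eta_1}, \dots, a_{\eta_m} \}$ with $\eta_1 \subseteq \cdots \subseteq \eta_m \subseteq \nu$; this set is contained in $E_{\eta_m}$, which is a complete $P_\infty$-graph by the maintained invariant, so $P_m(a_{\eta_1}, \dots, a_{\eta_m})$ holds. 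By Observation~\ref{bp}(5) each branch therefore corresponds to a consistent partial $\vp$-type, and the $k$-inconsistency of successors is exactly $k$-inconsistency of the corresponding instances of $\vp$; this is the $k$-tree property, so $\vp$ is not simple.

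The main obstacle to watch is the bookkeeping needed to keep each localization $P^{f_\eta}_1$ genuinely nontrivial and to keep the branch invariant exactly as stated: one must be careful that the localization attached to $\eta$ already implies $A$-consistency (trivially, as $A = \emptyset$) and that the finite conjunction of one-point-extension conditions defining $P^{f_\eta}_1$ is witnessed by $E_\eta$, so that the persistence hypothesis applies to it. There is also the minor point, standard here, that persistence was defined as producing witnesses for $X_k$ inside the localized \emph{set} $P^{f_\eta}_1(M)$ while the predicates $P_n$ in $X_k$ refer to the ambient (unlocalized) $P_n$ — exactly the subtlety flagged in Note~\ref{persistence-asks} — which is what we need, since we want the successor tuples to be genuinely $P_k$-empty in the sense of $T_1$. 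Finally, to extract the indiscernible, uniformly $(k{+}1)$-inconsistent form at each stage one invokes compactness together with Convention~\ref{localization-depends-on-T}, so that the whole construction can be carried out in a single sufficiently saturated model.
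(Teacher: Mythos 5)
Your proposal is essentially the paper's argument: a recursive tree construction that at each node invokes persistence together with Fact~\ref{one-pt-extns} to extract an infinite, uniformly $(k{+}1)$-inconsistent sequence of one-point $P_\infty$-extensions of the branch, building a $\vp$-tree and concluding non-simplicity. Two small remarks on precision rather than substance: witnesses for $X_k$ are $P_{k+1}$-empty (not $P_k$-empty) graphs, so what you exhibit is the $(k{+}1)$-tree property, which is of course enough; and the remark that ``$E_\eta$ itself witnesses the required complete graph'' only settles non-triviality if finite complete graphs are allowed to play the role of $C_\ell$ in Definition~\ref{localization} --- the cleaner justification, which the paper gives via its closing schema/compactness remark and Fact~\ref{one-pt-extns}, is that if the successor step were blocked, some finite localization around the branch would already contradict persistence.
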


\begin{proof}
Let us show that $\vp$ has the tree property, around some positive base set $A$ if one is specified. At stage $0$,
by hypothesis there exists an infinite indiscernible sharply $(k+1)$-inconsistent 
sequence $Y_0 \subset P_1$, each of whose
elements can be chosen to be a consistent $1$-point extension of $A$ in the sense of $P_\infty$ by Fact \ref{one-pt-extns}. 
Set $a_i$ to be the $i$th element of this sequence, for $i<\omega$.

At stage $t+1$, suppose we have constructed a tree of height $n$, $T_n = \{ a_\eta : \eta \in \omega^{\leq n} \}$
such that, writing $\subseteq$ for initial segment:
\begin{itemize}
\item every path is a consistent $n$-point extension of $A$, i.e.
$A \cup \{ a_\eta : \eta \subseteq \nu \}$ is a complete $P_\infty$-graph, for each $\nu \in \omega^n$;
\item for all $0 \leq k < n$ and all $\eta \in \omega^k$, $\{ a_{\eta^\smallfrown i}  : i<\omega \}$ is
$P_k$-complete but $P_{k+1}$-empty.
\end{itemize}

We would like to extend the tree to level $n+1$, and it suffices to show that the extension of any given
node $a_\nu$ (for $\nu \in \omega^n$) can be accomplished. But this amounts to repeating the argument for stage $0$
in the case where $A = A \cup \{a_\eta : \eta \subseteq \nu \}$. By assumption and Fact \ref{one-pt-extns}, 
this remains possible, so we continue.

Notice that the threat of all possible localizations is what makes continuation possible. That is, the schema
which says that ``$x$ is a 1-point extension of $A$'' simply says that $x$ remains (along with witnesses for $X_k$)
in each of an infinite set of localizations of $P_1$ with parameters from $A$. If this schema is inconsistent, there will be a
localization contradicting the hypothesis. 
\end{proof}

We can now characterize simplicity in terms of persistence:

\begin{theorem} \label{char-simple}
Let $\vp$ be a formula of $T$ and $\langle P_n \rangle$ its characteristic sequence. 

\begin{enumerate}
\item If the localization $\vp^f$ of $\vp$ is simple, then for each $P_\infty$-graph $A \subset P^f_1$ and for each $n<\omega$,
there exists a localization $P^{f_n}_1 \supset A$ of $P^f_1$ in which there is a uniform finite bound on the size of
a $P_n$-empty graph, i.e. there exists $m_n$ such that $X \subset P^f_1$ and $X^n \cap P_n = \emptyset$ implies $|X| \leq m_n$.
\item If localization $\vp^g$ of $\vp$ is not simple, then for all but finitely many $r<\omega$, $P^g_1$
contains an infinite $(r+1)$-empty graph.  
\end{enumerate}

\br
In other words, the following are equivalent for any positive base set $A$:
\begin{enumerate}
\item[(i)] There exists a localization $\vp^f$ of $\vp$ (with $\vp^f=\vp$ possible) such that $\vp^f$ is simple and $P^f_1 \supset A$.  
\item[(ii)] For each $n<\omega$, there exists a localization $P^{f_n}_1 \supset A$ in which there is a uniform finite bound
on the size of a $P_n$-empty graph.   
\end{enumerate}
\end{theorem}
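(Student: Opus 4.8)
The plan is to prove the two numbered implications, from which the final ``in other words'' equivalence follows immediately, since $X^n \cap P_n = \emptyset$ being bounded for all $n$ is exactly (ii), and simplicity of a localization is exactly (i). The work splits cleanly: part (1) is a localization/rank argument, part (2) is a direct reading of the tree property into the sequence.

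For part (1), I would argue as follows. Assume $\vp^f$ is simple, so by definition no $P_n$-empty tuple reasoning aside, $\vp^f$ (and each finite conjunction $\vp^f_k$) does not have the tree property. Fix a $P_\infty$-graph $A \subset P^f_1$ and fix $n$. The key observation is that an infinite $P_n$-empty graph $X \subset P^f_1$ witnesses that $\vp_n$ is $1$-consistent but (after passing to an indiscernible subsequence by compactness, as in Observation \ref{bp}(4)) $k$-inconsistent for some finite $k$, i.e. $\vp_n$ is dividable. So the existence of infinite $P_n$-empty graphs in arbitrarily small localizations around $A$ would say exactly that $\vp_n$ is dividable in every localization around $A$; but Observation \ref{dividing} (applied to $\psi = \vp^f_n$, which does not have the tree property since $\vp^f$ is simple) produces, for each fixed arity bound, a localization around $A$ with a uniform finite bound on the size of a $P_n$-empty graph. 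If one wants the bound to hold for a single localization handling a genuinely infinite empty graph of \emph{unbounded} finite arity of inconsistency, one uses that simple formulas have finite $D(\vp_n, k)$ for each $k$ and the fact that an infinite empty graph of unbounded inconsistency-arity would, by Ramsey/indiscerniblility, still yield an infinite empty graph of some fixed finite arity, reducing to the previous case. Alternatively, as the remark before the theorem suggests, one can rerun the characteristic-sequence proof of Observation \ref{dividing} (the ``direct argument'' pointed to in the proof of Conclusion \ref{ps-stable}) directly inside $P^f_1$. Either way the desired $m_n$ is obtained.

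For part (2), assume $\vp^g$ is not simple, so $\vp^g$ has the tree property, hence the $k$-tree property for some fixed $2 \leq k < \omega$ by Theorem \ref{thm-trees}. By Observation \ref{trees-in-P1} this gives a set $T = \{a_\eta : \eta \in 2^{<\omega}\} \subset P^g_1$ with pairwise-comparable sets $P_\infty$-complete and each immediate-successor set $\{a_{\rho^\smallfrown i} : i<\omega\}$ a $P_k$-empty graph. In particular $P^g_1$ contains an infinite $P_k$-empty graph, hence (by Monotonicity, Observation \ref{bp}(3)) an infinite $P_r$-empty graph for every $r \geq k$ — i.e. an infinite $(r+1)$-empty graph for all but finitely many $r$. (Here ``$(r+1)$-empty graph'' in the statement means a $P_{r+1}$-empty graph; one should double-check the indexing, but the content is just that non-simplicity forces persistent infinite empty graphs at cofinitely many arities.) This is exactly the contrapositive shape needed: it shows (ii) $\Rightarrow$ (i), since if no simple localization contained $A$, then every localization of every localization would be non-simple and part (2) would inject an unbounded empty graph into it, contradicting (ii).

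The main obstacle is the bookkeeping in part (1): passing from ``every small localization contains \emph{some} infinite $P_n$-empty graph, possibly with growing arity of inconsistency'' to a single localization with a \emph{uniform} finite bound on the \emph{size} of a $P_n$-empty graph. The clean route is to invoke finiteness of the dividing rank $D(\vp^f_n, k)$ for each $k$ together with compactness, exactly as in the standard proof that $D(x=x,\psi,k)<\omega$ for simple $\psi$ cited in Observation \ref{dividing}; the subtlety is only in confirming that the localizations used there are of the permitted form (finite conjunctions of $P_m$-conditions with finitely many parameters, containing $A$), which they are, since each step of the rank computation adds finitely many parameters and a single $P_m$-instance. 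Everything else is a translation through the definitions already set up.
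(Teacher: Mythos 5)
Your proposal follows the same decomposition as the paper: establish part (1) via the localization/dividing machinery (the paper's proof cites Lemma \ref{simple-trees}, which the surrounding remark explains is exactly a re-derivation of Observation \ref{dividing} inside the characteristic sequence; you cite Observation \ref{dividing} directly, which the paper explicitly says is the ``immediate proof''), and establish part (2) by reading the $k$-tree property into $P^g_1$ via Observation \ref{trees-in-P1} and then using Monotonicity to propagate the infinite $P_k$-empty graph up to $P_{r+1}$ for $r+1\geq k$. Both of those steps are correct and match the paper, and your note that the characteristic sequences of $\vp$ and $\vp^g$ agree on $P^g_1$ (which is what licenses quoting Observation \ref{trees-in-P1} for the localized formula) is the right thing to check.

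The one place to tighten is the passage from (1),(2) to the stated equivalence (i)$\iff$(ii), which the paper dispatches with the single sentence ``it suffices to show the first two statements.'' Your version, ``part (2) would inject an unbounded empty graph into it, contradicting (ii),'' does not quite close the (ii)$\Rightarrow$(i) direction: part (2) gives an infinite $P_{r+1}$-empty graph in $P^g_1$ only for $r$ above some threshold $k_g$ that depends on the localization $g$, whereas to refute (ii) one needs a \emph{single} $n$ that defeats every localization. One must either argue that these thresholds can be uniformly bounded, or run a pigeonhole/compactness argument over the directed poset of localizations (using that, for a fixed $\ell$, the collection of localizations containing an $X_\ell$-witness is downward closed). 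Relatedly, your last paragraph identifies the ``main obstacle'' as living inside part (1), but as you yourself observe the forward direction of (1) is delivered whole by Observation \ref{dividing}; the bookkeeping about ``growing arity of inconsistency'' is really the worry in the (ii)$\Rightarrow$(i) deduction, not in (1). Since the paper is equally laconic here this is not a defect unique to your write-up, but it is the one genuine soft spot worth naming explicitly.
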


\begin{proof}
It suffices to show the first two statements. 
(1) is Lemma \ref{simple-trees} applied to the formula $\vp^f$. (2) is the second clause of Observation \ref{trees-in-P1},
where ``almost all'' means for $r$ above $k$, the arity of dividing.
\end{proof}

\end{document}